\DeclareMathOperator{\GL}{GL}
\DeclareMathOperator{\SL}{SL}
\let\sl\relax
\DeclareMathOperator{\sl}{\mathfrak{sl}}
\DeclareMathOperator{\SU}{SU}
\DeclareMathOperator{\su}{\mathfrak{su}}
\newcommand{\CC}{\mathbb{C}}
\newcommand{\RR}{\mathbb{R}}
\newcommand{\calD}{\mathcal{D}}
\newcommand{\calF}{\mathcal{F}}
\newcommand{\calH}{\mathcal{H}}
\newcommand{\calO}{\mathcal{O}}
\newcommand{\cO}{\mathcal{O}}
\newcommand{\calV}{\mathcal{V}}
\DeclareMathOperator{\Ind}{Ind}
\DeclareMathOperator{\ad}{ad}
\DeclareMathOperator{\Ad}{Ad}
\DeclareMathOperator{\Hom}{Hom}
\DeclareMathOperator{\id}{id}
\DeclareMathOperator{\diag}{diag}
\newcommand{\ip}[2]{\langle #1,#2 \rangle}
\theoremstyle{plain}
\newtheorem{theorem}{Theorem}[section]
\newtheorem{proposition}[theorem]{Proposition}
\newtheorem{lemma}[theorem]{Lemma}
\newtheorem{corollary}[theorem]{Corollary}
\newtheorem{thmalph}{Theorem}
\theoremstyle{definition}
\newtheorem{example}[theorem]{Example}
\newtheorem{remark}[theorem]{Remark}
\newcommand{\fa}{\mathfrak{a}}
\newcommand{\fc}{\mathfrak{c}}
\newcommand{\fg}{\mathfrak{g}}
\newcommand{\fk}{\mathfrak{k}}
\newcommand{\fl}{\mathfrak{l}}
\newcommand{\fm}{\mathfrak{m}}
\newcommand{\fn}{\mathfrak{n}}
\newcommand{\fp}{\mathfrak{p}}
\newcommand{\ft}{\mathfrak{t}}
\newcommand{\fz}{\mathfrak{z}}
\newcommand{\C}{\mathbb{C}}
\newcommand{\R}{\mathbb{R}}
\newcommand{\Dgt}{\Delta (\fg_\C,\ft_\C)}
\newcommand{\Dgtp}{\Delta^+ (\fg_\C,\ft_\C)}
\newcommand{\Dn}{\Delta_n}
\newcommand{\Dnp}{\Delta_n^+}
\newcommand{\Dc}{\Delta_c}
\newcommand{\Dcp}{\Delta_c^+}
\newcommand{\fsl}{\mathfrak{sl}}
\newcommand{\ds}{\displaystyle}
\newcommand{\gs}{\sigma}
\renewcommand{\:}{\, : \,}
\numberwithin{equation}{section}
\title[Hol. discrete ser. in the gen. Whittaker Plancherel formula II]{The holomorphic discrete series contribution to the generalized Whittaker Plancherel formula II. Non-tube type groups}
\author{Jan Frahm}
\address{Department of Mathematics, Aarhus University, Ny Munkegade 118, 8000 Aarhus C, Denmark}
\email{frahm@math.au.dk}
\author{Gestur \'{O}lafsson}
\address{Department of Mathematics, Louisiana State University, Baton Rouge, LA 70803, USA}
\email{olafsson@math.lsu.edu}
\author{Bent {\O}rsted}
\address{Department of Mathematics, Aarhus University, Ny Munkegade 118, 8000 Aarhus C, Denmark}
\email{orsted@math.au.dk}
\begin{document}

\subjclass[2010]{Primary 22E46; Secondary 43A85.}

\keywords{}

\thanks{Part of the research in this paper was carried out within the online research community on Representation Theory and Noncommutative Geometry sponsored by the American Institute of Mathematics.
The first author was partially supported by a research grant from the Villum Foundation (Grant No. 00025373).
The second author was partially supported by Simons grant 586106.}

\dedicatory{Dedicated to the memory of Gerrit van Dijk}

\maketitle

\begin{abstract}
For every simple Hermitian Lie group $G$, we consider a certain maximal parabolic subgroup whose unipotent radical $N$ is either abelian (if $G$ is of tube type) or two-step nilpotent (if $G$ is of non-tube type). By the generalized Whittaker Plancherel formula we mean the Plancherel decomposition of $L^2(G/N,\omega)$, the space of square-integrable sections of the homogeneous vector bundle over $G/N$ associated with an irreducible unitary representation $\omega$ of $N$. Assuming that the central character of $\omega$ is contained in a certain cone, we construct embeddings of all holomorphic discrete series representations of $G$ into $L^2(G/N,\omega)$ and show that the multiplicities are equal to the dimensions of the lowest $K$-types.\\
The construction is in terms of a kernel function which can be explicitly defined using certain projections inside a complexification of $G$. This kernel function carries all information about the holomorphic discrete series embedding, the lowest $K$-type as functions on $G/N$, as well as the associated Whittaker vectors.
\end{abstract}


\section*{Introduction}

Gerrit van Dijk worked on several aspects of representation theory and harmonic analysis, in particular using special functions and distributions on Lie groups. With his younger colleagues he developed the notion of canonical representations due to Gelfand et al.~as concrete models of unitary representations; giving the explicit decomposition of these into irreducibles amounts in effect to calculating tensor products of unitary irreducible representations or the restriction of such to symmetric subgroups. The groups in question include the classical rank one semisimple Lie groups, such as $G=\SU(n,1)$ with the corresponding Riemannian symmetric space $\calD=G/K$ of Hermitian type, namely the open unit ball in complex Euclidian $n$-space; here the boundary sphere is also the Shilov boundary and of the form $G/P$ for a parabolic subgroup $P$ with nilradical $N$ a Heisenberg group if $n>1$.

In the present paper we shall consider similar groups of Hermitian type, non-tube type, and various models of unitary representations. Even though unitary representations are often considered up to unitary equivalence, for certain problems it is convenient to work with concrete models; this is the case for the above mentioned problems studied by Gerrit van Dijk, namely branching laws, i.e. finding the explicit decomposition of a given irreducible representation restricted to a subgroup. We shall also use some of the tools in his study, namely reproducing kernel Hilbert spaces, interwining operators given as kernel operators, and explicit distribution vectors in unitary representations.

With notation as in the example above, we shall find explicit irreducible invariant subspaces of $L^2(G/N, \omega)$ where $\omega$ is an infinite-dimensional irreducible unitary representation of $N$ defining a homogeneous vector bundle over $X=G/N$; these will be equivalent to holomorphic discrete series representations of $G$. In effect this gives a so-called Whittaker model of these representations corresponding to the Siegel parabolic subgroup $P$ of potential use in the study of automorphic representations and number theory. Thus, our aim in this paper is to
\begin{itemize}
	\item Find explicit Whittaker distribution vectors in the holomorphic discrete series for $G$ with values in $\omega$,
	\item Find explicit intertwining kernel operators between the standard model on $\calD=G/K$ and the Whittaker model on $X=G/N$ for these representations,
	\item Find an explicit expression for the lowest $K$-types on $X$,
\end{itemize}
and to understand links between these problems.

Let us state our results in more detail.

\subsection*{The inclusion $G\subseteq P^+K_\C N_\C$}

Let $G$ be a simple Lie group of Hermitian type, i.e. the associated Riemannian symmetric space $\calD=G/K$ carries a $G$-invariant complex structure. Denote by $\fp^+$ the holomorphic tangent space at the base point $o=eK\in\calD$ and by $\fp^-$ the antiholomorphic tangent space. We assume that $G$ is contained as a real form inside a simply-connected complex Lie group $G_\C$. Let $P^\pm=\exp\fp^\pm$ denote the connected subgroups of $G_\C$ with Lie algebras $\fp^\pm$ and write $K_\C$ for the complexification of $K$ in $G_\C$.

For groups of tube type, we constructed in our previous work \cite{FOO24} the holomorphic discrete series contribution to the Plancherel decomposition of $L^2(G/N,\psi)$, where $N$ is the unipotent radical of some maximal parabolic subgroup and $\psi$ a certain non-degenerate character on $N$. One approach to this problem was to show the inclusion $G\subseteq P^+K_\C N_\C$, where $N_\C$ is the connected complexification of $N$ inside $G_\C$. In order to extend these results to non-tube type groups, we first investigate for which maximal parabolic subgroups $P=MAN$ this inclusion holds.

\begin{thmalph}[see Theorem~\ref{thm:pkn1}]\label{thm:IntroA}
	For every simple Hermitian Lie group $G$ there exists a unique standard maximal parabolic subgroup $P=MAN$ such that $G\subseteq P^+K_\C N_\C$.
\end{thmalph}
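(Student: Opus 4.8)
The plan is to work with the standard maximal parabolic subgroups of a simple Hermitian Lie group $G$ one at a time via the classification, reducing the inclusion $G\subseteq P^+K_\C N_\C$ to a tractable statement about a restricted root system. First I would recall the structure theory: fix a maximal abelian subspace $\fa$ of $\fp$ (the $(-1)$-eigenspace of a Cartan involution), giving the restricted root system $\Sigma=\Dga$; the standard maximal parabolics $P=MAN$ correspond to the simple restricted roots, i.e. to deleting one node from the Dynkin diagram of $\Sigma$. For each such $P$, the inclusion $G\subseteq P^+K_\C N_\C$ is an open-density statement: since $P^+K_\C N_\C$ is a Zariski-open subset of $G_\C$ (being the image of the product map on the three unipotent/reductive pieces, whose differential at $e$ is injective precisely when $\fp^+\cap(\fk_\C+\fn_\C)=0$), it suffices to show that this triple product actually contains all of $G$, not merely a dense subset. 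The natural strategy is the one used in \cite{FOO24} for tube type: use the Harish-Chandra embedding $G\subseteq P^+K_\C P^-$ together with an analysis of where $P^-$ sits relative to $K_\C N_\C$, or equivalently to track the ``denominators'' (the relevant matrix coefficients / minors in a faithful representation of $G_\C$) and show they are nonvanishing on $G$.

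The key steps, in order: (1) Characterize, in terms of the restricted root data, the condition that the multiplication map $\fp^+\times\fk_\C\times\fn_\C\to\fg_\C$ is a linear isomorphism onto $\fg_\C$; this forces a dimension count $\dim\fp^+ + \dim\fk_\C + \dim\fn_\C = \dim\fg_\C$, which already cuts the list of candidate parabolics down dramatically and should single out a unique $P$ for each Hermitian type (this is where the dichotomy ``$N$ abelian in tube type / $N$ two-step nilpotent in non-tube type'' will emerge). (2) For the surviving candidate $P$, establish the global inclusion. Here I would pass to the bounded realization: every $g\in G$ can be written $g = p^+(g)\,\kappa(g)\,p^-(g)$ with $p^\pm(g)\in P^\pm$, $\kappa(g)\in K_\C$, by Harish-Chandra; it then remains to show $p^-(g)\in K_\C N_\C$, i.e. that $P^-\cap G$-translates land in the open cell $K_\C N_\C$ of the (complex) flag-type variety $G_\C/N_\C$ or its relevant quotient. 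Equivalently, choosing a lowest-weight vector or an appropriate minor, show the relevant holomorphic function on $G_\C$ has no zeros along $G$. (3) Conclude uniqueness from step (1): the dimension identity plus the requirement that the sum be direct (transversality $\fp^+\cap(\fk_\C+\fn_\C)=0$) pins down the node to delete.

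The main obstacle I anticipate is step (2): openness of $P^+K_\C N_\C$ and the dimension count are essentially linear algebra, but upgrading ``dense/open'' to ``all of $G$'' requires a genuine positivity or convexity input — one must show a specific determinant-type function is nonvanishing on the real form $G$, not just generically nonzero. In the tube-type case this was handled via the Jordan-algebra structure on $\fp^+$ and the fact that $G$ acts by ``fractional linear'' transformations preserving a symmetric cone; in the non-tube case $\fp^+$ carries only a Jordan triple (not algebra) structure and $N$ is a genuine Heisenberg-type group, so the relevant function is quadratic rather than polynomial in the obvious coordinates. I would handle this by working in the Harish-Chandra realization and reducing the nonvanishing to the statement that $\|z\|<1$ on the bounded domain controls the invertibility of the automorphy-factor-type operator $B(z,w)$ (the Bergman operator) restricted to the appropriate subspace cut out by $\fn_\C$; concretely, I expect the proof to reduce to showing that a certain principal minor of $B(z,\bar z)$ — the one corresponding to the ``$N$-part'' — is positive for $z\in\calD$, which follows because $B(z,\bar z)$ is positive-definite there. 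The case-by-case verification across the four families $\su(p,q)$ with $p\neq q$, $\so^*(2n)$ with $n$ odd, $\mathfrak{e}_{6(-14)}$ (the non-tube Hermitian types) plus a uniform treatment subsuming the tube cases is the bookkeeping one would then carry out in the body of Theorem~\ref{thm:pkn1}.
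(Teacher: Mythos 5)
There is a genuine gap in your step (1), and it occurs precisely in the non-tube case that this paper is about. You propose to single out the parabolic by requiring that the product map $\fp^+\times\fk_\C\times\fn_\C\to\fg_\C$ be a linear isomorphism, i.e.\ the dimension identity $\dim\fp^+ + \dim\fk_\C + \dim\fn_\C = \dim\fg_\C$ together with a direct-sum (transversality) condition. For the parabolic that actually works (the one with $j=r$ in the paper's indexing) this \emph{fails} whenever $\calD$ is of non-tube type: one has $(\fp^+\oplus\fk_\C)\cap\fn_\C\neq 0$, indeed $K_\C P^+\cap N_\C=\exp\big(\bigoplus_{j}\fg^{\frac{1}{2}\lambda_j,+}_\C\big)$ is positive-dimensional, so the multiplication map $P^+\times K_\C\times N_\C\to G_\C$ has positive-dimensional fibers and $\dim\fp^+ + \dim\fk_\C + \dim\fn_\C > \dim\fg_\C$. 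Concretely, for $G=\SU(p,q)$ with $p<q$ one computes $\dim_\C\fn_\C = 2pq-p^2 > pq = \dim_\C\fp^+$. Your criterion would therefore reject the unique parabolic for which the theorem holds, and the uniqueness argument you base on it collapses. (Openness of $P^+K_\C N_\C$ only requires surjectivity of the differential, not injectivity; only the former survives in the non-tube case.) Your step (2) also starts from a false reduction: it is not true that $p^-(g)\in K_\C N_\C$, already for $\SU(1,1)$ a nontrivial lower-triangular unipotent matrix lies in $P^+K_\C N_\C$ only with a nontrivial $P^+$-component; what would suffice is $p^-(g)\in P^+K_\C N_\C$, using that $K_\C$ normalizes $P^+$. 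The subsequent positivity argument via minors of the Bergman operator is left as a hope rather than a proof.

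For comparison, the paper's proof needs neither a dimension count nor any positivity input. Since $P^+K_\C N_\C$ is left $K_\C$-invariant and right $(K\cap L)_\C N_\C$-invariant, the decomposition $G=KA(K\cap L)N$ reduces the inclusion to $A\subseteq P^+K_\C N_{j\C}$, hence to $\exp(tx_k)\in P^+K_\C N_{j\C}$ for each strongly orthogonal root. An explicit $\SU(1,1)$-computation gives $\exp(tx_k)=\exp((1-e^{-2t})e_k)\exp(-th_k)\exp(\tfrac{1}{2i}(1-e^{-2t})\varphi_k(\cdot))$ with the last factor in $\exp\fg^{\lambda_k}$, so membership for all $t$ and all $k$ is equivalent to $\fg^{\lambda_k}\subseteq\fn_j$ for all $k$, which by Moore's theorem holds if and only if $j=r$. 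This yields existence and uniqueness simultaneously and uniformly, with no case-by-case classification.
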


\subsection*{Discrete spectrum in the generalized Whittaker Plancherel formula}

We fix the parabolic subgroup $P=MAN$ from Theorem~\ref{thm:IntroA}. The unipotent radical $N$ is either abelian (if $\calD$ is of tube type) or two-step nilpotent (if $\calD$ is of non-tube type). We fix an irreducible unitary representation $\omega$ of $N$ whose central character is contained in a certain cone in the Lie algebra of the center of $N$ (see Section~\ref{sec:FockSpaceReps} for details). Note that $\omega$ is a character if $N$ is abelian and infinite-dimensional otherwise. By the generalized Whittaker Plancherel formula we mean the Plancherel formula for the induced representation $\Ind_N^G(\omega)$ which can be realized on $L^2(G/N,\omega)$, the $L^2$-sections of the homogeneous vector bundle over $G/N$ associated with $\omega$. Our main result is the following:

\begin{thmalph}[see Corollary~\ref{cor:HolDScontribution}]\label{thm:IntroB}
	Every holomorphic discrete series representation of $G$ occurs discretely in $L^2(G/N,\omega)$ with multiplicity equal to the dimension of its highest weight space.
\end{thmalph}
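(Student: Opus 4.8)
The plan is to exploit the inclusion $G\subseteq P^+K_\C N_\C$ from Theorem~\ref{thm:IntroA} to transport the standard realization of a holomorphic discrete series representation $\pi_\lambda$ on a weighted Bergman space $\mathcal{H}_\lambda(\mathcal{D})$ of holomorphic functions on $\mathcal{D}=G/K$ into a model on $G/N$ with values in $\omega$. Concretely, for $g\in G$ write (using the inclusion) $g = p^+(g)\,\kappa(g)\,n(g)$ with $p^+(g)\in P^+$, $\kappa(g)\in K_\C$, $n(g)\in N_\C$; this is the analogue of the classical Harish-Chandra decomposition $g=p^+(g)\kappa(g)(\text{something in }P^-)$, but with the $P^-$-part replaced by an $N_\C$-part because we are sitting over $G/N$ rather than $G/K$. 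The first step is to check that the factors $p^+(g)$, $\kappa(g)$, $n(g)$ are well-defined and depend real-analytically on $g$, and to record the cocycle identities they satisfy under left translation by $G$ and right translation by $N$; these identities are what will make the intertwining map below $G$-equivariant.

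Next I would define, for each lowest-$K$-type vector (equivalently highest-weight vector) $v$ in the minimal $K$-type of $\pi_\lambda$, a generalized Whittaker vector: a distribution vector $\eta_{\lambda,v}\in (\mathcal{H}_\lambda^{-\infty})$ transforming under $N$ according to $\omega$. The natural candidate is built from the kernel function of the abstract, using $\kappa(g)$ to produce the automorphy factor $J_\lambda(g)$ acting on the $K$-type, and $\omega\circ n(g)$ (or rather the appropriate matrix coefficient of $\omega$ against the highest weight vector of the Fock model of $\omega$) to produce the $\omega$-covariance. Pairing $\pi_\lambda(g)$-translates of the reproducing kernel at the base point against $\eta_{\lambda,v}$ yields, for each $v$, an explicit $\omega$-valued function $\Phi_{\lambda,v}$ on $G$ satisfying $\Phi_{\lambda,v}(gn)=\omega(n)^{-1}\Phi_{\lambda,v}(g)$; one then shows these functions lie in $L^2(G/N,\omega)$ using the rapid decay of holomorphic discrete series matrix coefficients together with an estimate on the $N_\C$-factor $n(g)$, which is where the cone condition on the central character of $\omega$ enters — it guarantees convergence of the relevant Gaussian-type integral coming from the Fock model. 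The span over $v$ in the $K$-type then generates a $G$-subrepresentation of $L^2(G/N,\omega)$ isomorphic to $\pi_\lambda$, and the assignment $v\mapsto\Phi_{\lambda,v}$ is $K$-equivariant and injective, giving multiplicity at least $\dim(\text{highest weight space})$; since the highest weight is extremal this dimension equals $\dim$ of the minimal $K$-type's highest-weight space as stated.

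For the reverse inequality — that the multiplicity is exactly $\dim$ of the highest weight space — I would pass to the Whittaker vectors: by Frobenius reciprocity the multiplicity of $\pi_\lambda$ in $L^2(G/N,\omega)$ is $\dim\Hom_N(\pi_\lambda^{-\infty},\omega)$, the space of continuous $\omega$-covariant functionals on the smooth vectors. Realizing $\pi_\lambda^{-\infty}$ via the Bergman model and restricting such a functional to the minimal $K$-type, one shows (using the $\fp^-$-action, which kills the lowest $K$-type, together with the explicit form of the Lie algebra action in these coordinates) that the functional is determined by its restriction to the highest-weight line of the minimal $K$-type within each copy — more precisely that the restriction map to the highest weight space of the minimal $K$-type is injective on Whittaker functionals. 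Combined with the existence statement above this pins down the multiplicity. The main obstacle I anticipate is the analytic one: proving the $\Phi_{\lambda,v}$ are genuinely square-integrable on $G/N$ and that the Whittaker functionals are automatically continuous (tempered growth), since $N$ is now two-step nilpotent in the non-tube case and $\omega$ is infinite-dimensional, so the convergence estimates must be carried out in the Fock model of $\omega$ rather than reduced to a character as in \cite{FOO24}; the cone hypothesis on the central character is exactly the condition that makes these estimates work, and verifying this carefully is the technical heart of the argument.
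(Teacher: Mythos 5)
Your construction of the embeddings is essentially the paper's: the decomposition $g=p^+(g)\kappa(g)n(g)$ furnished by $G\subseteq P^+K_\C N_\C$, explicit Whittaker data built from the automorphy factor on the $K_\C$-part and from $\omega$ on the $N_\C$-part, and an $L^2$-estimate reduced by $KAK$-type integration formulas to an integral over the split torus. Two inaccuracies there are worth flagging: the convergence of that integral is governed by the Harish-Chandra condition $\mu_{\pi,j}>\rho_j$, i.e.\ by $\calH_\pi$ genuinely belonging to the discrete series (Theorem~\ref{thm:EmbeddingsAreL2}); the cone condition on the central character only supplies the superexponential decay in one direction of the $A$-variable. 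Also, the scalar you attach to the $N_\C$-factor is not a matrix coefficient of $\omega$ but a matrix coefficient of $\pi$ along $N_\C\cap K_\C P^\pm$, viewed as an element of the Fock space via the map $m(\xi\otimes\overline{\eta})(z)=\langle\pi(n_z^+)^{-1}\xi,\eta\rangle$ of \eqref{eq:DefMatrixCoeffMap}.

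The genuine gap is in your upper bound on the multiplicity. A Whittaker functional $W\in\Hom_N(\calH_\pi^\infty,\calF)$ restricted to the minimal $K$-type is an element of $\Hom(\calV_\pi,\calF)$, and in the non-tube case $\calF$ is infinite-dimensional; so injectivity of the restriction map (which you propose to obtain from the $\fp^-$-action) bounds nothing -- it leaves an infinite-dimensional space of candidates. The mechanism that closes the argument, and the genuinely new ingredient relative to the tube case, is that the intersection $N_\C\cap K_\C P^-=\exp\bigl(\bigoplus_{j}\fg_\C^{\frac{1}{2}\lambda_j,-}\bigr)$ is nontrivial: it acts on $\calV_\pi$ through $\pi$ (extended trivially on $P^-$) and on $\calF_0$ through $\omega$, and the kernel evaluated at the identity must intertwine these two actions. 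Lemma~\ref{lem:IntertwinerFockSpaceToLKT} shows that every element of $\Hom_{N_\C\cap(K_\C P^-)}(\calF_0,\calV_\pi)$ is determined by the single vector $\eta=A\mathbf{1}\in\calV_\pi$, which is exactly what cuts the multiplicity down to $\dim\calV_\pi$. Without imposing this covariance under the ``holomorphic half'' of $\fn_{\frac{1}{2},\C}$ your argument cannot produce a finite bound, let alone the sharp one. (A secondary point: Frobenius reciprocity identifies $\Hom_N(\calH_\pi^\infty,\calF)$ with intertwiners into $C^\infty(G/N,\omega)$, so it gives only an upper bound for the $L^2$-multiplicity; the paper uses it in precisely that one-sided way.)
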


Here, the highest weight space of an irreducible unitary representation $(U,\calH)$ is the subspace $\calH^{\fp^+}$ annihilated by $\fp^+$. It forms an irreducible representation $(\pi,\calV_\pi)$ of $K$ which determines $(U,\calH)$  uniquely. We therefore write $(U_\pi,\calH_\pi)$ for the holomorphic discrete series representation with highest weight $\pi$, and we realize it on a space $\calH_\pi$ of holomorphic $\calV_\pi$-valued functions on $\calD$ (see Section~\ref{sec:HoloDS} for details). Note that $\calV_\pi$ is also the lowest $K$-type of $\calH_\pi$.

\subsection*{Lowest $K$-types and Whittaker vectors via kernel functions}

The proof of Theorem~\ref{thm:IntroB} is by constructing explicit embeddings
$$ \calH_\pi\otimes\overline{\calV_\pi}\to L^2(G/N,\omega), \quad F\otimes\overline\eta\mapsto T_{\pi,\eta}F. $$
The maps $T_{\pi,\eta}:\calH_\pi\to L^2(G/N,\omega)$ ($\eta\in\calV_\pi$) are given in terms of their integral kernels.

In view of Theorem~\ref{thm:IntroA}, every element $g\in G$ decomposes into $g=p^+_N(g)k_{\C,N}^+(g)n_\C^+(g)\in P^+K_\C N_\C$ (not uniquely, but all formulas will be independent of the choice of decomposition). Using this decomposition, we define for every $\eta\in\calV_\pi$ a function $\Psi_{\pi,\eta}:G\times\calD\to\Hom(\calF,\calV_\pi)$ by its adjoint:
\begin{multline*}
	\Psi_{\pi,\eta}(x,g\cdot o)^*:\calV_\pi\to\calF,\\
	\xi \mapsto \omega(n_\C^+(g^{-1}x))^{-1}m\Big(\pi(k_{\C,N}^+(g^{-1}x))^{-1}\circ j_\pi(g,o)^{-*}\xi\otimes\overline{\eta}\Big) \qquad (x,g\in G,\xi\in\calV_\pi),
\end{multline*}
where $j_\pi(g,z)\in\GL(\calV_\pi)$ ($g\in G$, $z\in\calD$) is the cocycle used to define the holomorphic discrete series action on $\calD$ (see \eqref{eq:DefJpiAndKpi}) and $m:\calV_\pi\otimes\overline{\calV}_\pi\to\calF$ is a certain matrix coefficient map (see \eqref{eq:DefMatrixCoeffMap}). This construction is discussed in detail in Section~\ref{sec:ExplicitKernels}, but the main point is that the definition of $\Psi$ is completely explicit when using the embedding $G\subseteq P^+K_\C N_\C$.

The operator $T_{\pi,\eta}:\calH_\pi\to L^2(G/N,\omega)$ ($\eta\in\calV_\pi$) is most easily characterized by its adjoint $T_{\pi,\eta}^*:L^2(G/N,\omega)\to\calH_\pi$ which has $\Psi(x,z)$ as integral kernel:
$$ T_{\pi,\eta}^*f(z) = \int_{G/N}\Psi_{\pi,\eta}(x,z)f(x)\,d(xN) \qquad (f\in L^2(G/N,\omega),z\in\calD). $$

The function $\Psi$ carries all information about the embedding of the associated holomorphic discrete series representation into $L^2(G/N,\omega)$ in the following sense:

\begin{thmalph}[see Proposition~\ref{prop:AdjointIntertwiner}, Lemma~\ref{lem:LKTembedding} and Proposition~\ref{prop:WhittakerVectorsAsAntiholFcts}]\label{thm:IntroC}
	\begin{enumerate}
		\item\label{thm:IntroC1} The map
		$$ \calH_\pi\otimes\overline{\calV_\pi}\to L^2(G/N,\omega), \quad F\otimes\overline{\eta}\mapsto T_{\pi,\eta}F $$
		constructs the holomorphic discrete series contribution in Theorem~\ref{thm:IntroB}.
		\item\label{thm:IntroC2} The embedding $T_{\pi,\eta}$ restricted to the lowest $K$-type $\calV_\pi$ is given by
		$$ T_{\pi,\eta}\xi(x) = \Psi_{\pi,\eta}(x,o)^*\xi \qquad (\xi\in\calV_\pi,x\in G). $$
		\item\label{thm:IntroC3} Identifying distribution vectors on $\calH_\pi$ with antiholomorphic $\calV_\pi^\vee$-valued functions on $\calD$ (see Section~\ref{sec:BackToWhittaker} for details), the Whittaker vector $W\in\Hom_N(U_\pi^\infty|_N,\omega)$ associated with the embedding $T_{\pi,\eta}\in\Hom_G(U_\pi,\Ind_N^G(\omega))$ by Frobenius reciprocity is given by the $\calV_\pi^\vee\otimes\calF=\Hom(\calV_\pi,\calF)$-valued antiholomorphic function
		$$ W(z) = \Psi_{\pi,\eta}(e,z)^* \qquad (z\in\calD). $$
	\end{enumerate}
\end{thmalph}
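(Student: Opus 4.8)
The plan is to establish the three assertions together, in the order (\ref{thm:IntroC2}), (\ref{thm:IntroC1}), (\ref{thm:IntroC3}); once the operator with integral kernel $\Psi_{\pi,\eta}$ is under control, the last two are essentially formal. The backbone is to view $T^*_{\pi,\eta}\colon L^2(G/N,\omega)\to\calH_\pi$ as an integral operator, to show that it is well-defined, bounded, and intertwines the left regular representation on $L^2(G/N,\omega)$ with $U_\pi$, and then to set $T_{\pi,\eta}:=(T^*_{\pi,\eta})^*$. Irreducibility of $U_\pi$ together with nonvanishing of $T_{\pi,\eta}$ then forces its image to be an irreducible subrepresentation of $L^2(G/N,\omega)$ equivalent to $U_\pi$; letting $\eta$ run over an orthonormal basis of $\calV_\pi$ and checking that the $T_{\pi,\eta}$ are linearly independent gives the lower bound on the multiplicity, and a matching upper bound (the one genuinely external input, which also establishes Theorem~\ref{thm:IntroB} via Corollary~\ref{cor:HolDScontribution}) completes part (\ref{thm:IntroC1}).

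First I would check that $\Psi_{\pi,\eta}(x,z)^*$ is independent of the chosen decomposition $g^{-1}x=p^+_N\,k^+_{\C,N}\,n^+_\C\in P^+K_\C N_\C$: here $\pi$ extends holomorphically to $K_\C$ and $\omega$ extends holomorphically to $N_\C$ (it is a Fock-space representation, Section~\ref{sec:FockSpaceReps}), and the non-uniqueness lies in a subgroup to which the matrix-coefficient map $m$ is insensitive, so this is a direct verification. A right factor $n\in N\subseteq N_\C$ changes only the $N_\C$-component, $n^+_\C(g^{-1}xn)=n^+_\C(g^{-1}x)\,n$, leaving $k^+_{\C,N}$ fixed, whence $\Psi_{\pi,\eta}(xn,z)^*=\omega(n)^{-1}\Psi_{\pi,\eta}(x,z)^*$; thus $x\mapsto\Psi_{\pi,\eta}(x,z)f(x)$ descends to $G/N$ and the integral $T^*_{\pi,\eta}f(z)$ is meaningful. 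For boundedness one needs a suitable bound on the kernel $\Psi_{\pi,\eta}$; in view of the (pointwise, boundedness-free) equivariance identity of the next paragraph this amounts to the square-integrability over $G/N$ of a generalized matrix coefficient of $U_\pi$ twisted by $\omega$, where the square-integrability of the holomorphic discrete series and the precise parabolic $P=MAN$ of Theorem~\ref{thm:IntroA} come in. I expect this estimate, not the algebra, to be the main obstacle: in the non-tube-type case one has to control the growth of both $\omega\big(n^+_\C(\,\cdot\,)\big)^{-1}$ on the Fock space and $\pi\big(k^+_{\C,N}(\,\cdot\,)\big)^{-1}$ at once.

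The $G$-equivariance of $T^*_{\pi,\eta}$ is the algebraic heart. Substituting $x\mapsto hx$ in $\int\Psi_{\pi,\eta}(x,z)f(h^{-1}x)\,d(xN)$ and comparing with $U_\pi(h)(T^*_{\pi,\eta}f)$, equivariance reduces to a pointwise identity relating $\Psi_{\pi,\eta}(hx,g\cdot o)^*$ to $\Psi_{\pi,\eta}(x,h^{-1}g\cdot o)^*$; since $g^{-1}hx=(h^{-1}g)^{-1}x$ the $\omega$- and $\pi$-factors agree verbatim, and the remaining relation between the values of $j_\pi$ is exactly the cocycle identity $j_\pi(ab,w)=j_\pi(a,b\cdot w)\,j_\pi(b,w)$ (see \eqref{eq:DefJpiAndKpi}) at $a=h^{-1}$, $b=g$, $w=o$. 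Hence $T^*_{\pi,\eta}$ intertwines, so $T_{\pi,\eta}=(T^*_{\pi,\eta})^*$ is bounded and $G$-equivariant; it is nonzero because $\Psi_{\pi,\eta}(e,o)^*=m(\,\cdot\otimes\overline\eta)\neq0$. Distinct basis vectors $\eta$ give linearly independent $T_{\pi,\eta}$, since by part (\ref{thm:IntroC2}) their restrictions to the lowest $K$-type are $\xi\mapsto\Psi_{\pi,\eta}(\,\cdot\,,o)^*\xi$ and $\overline\eta\mapsto m(\,\cdot\otimes\overline\eta)$ is injective.

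Finally, part (\ref{thm:IntroC2}) is a reproducing-kernel computation: realize $\calH_\pi$ with reproducing kernel $\mathcal{K}_\pi(z,w)\in\End(\calV_\pi)$, normalized so that $\mathcal{K}_\pi(o,o)=\id_{\calV_\pi}$ and hence $\eta'\mapsto\mathcal{K}_\pi(\,\cdot\,,o)\eta'$ is the isometric embedding of the lowest $K$-type $\calV_\pi\hookrightarrow\calH_\pi$; then for $f\in L^2(G/N,\omega)$ one computes $\langle T_{\pi,\eta}(\mathcal{K}_\pi(\,\cdot\,,o)\eta'),f\rangle_{L^2}=\langle\eta',(T^*_{\pi,\eta}f)(o)\rangle_{\calV_\pi}=\int_{G/N}\langle\Psi_{\pi,\eta}(x,o)^*\eta',f(x)\rangle_\calF\,d(xN)$, which is precisely the identity $T_{\pi,\eta}\xi(x)=\Psi_{\pi,\eta}(x,o)^*\xi$. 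For part (\ref{thm:IntroC3}) I would feed this into Frobenius reciprocity in the form $W(v)=\ev_{e}(T_{\pi,\eta}v)$ for $v\in\calH_\pi^\infty$ and then transport $W\in\calH_\pi^{-\infty}\otimes\calF$ to an antiholomorphic $\Hom(\calV_\pi,\calF)$-valued function via the identification of Section~\ref{sec:BackToWhittaker}: evaluating at $z=g\cdot o$, using the transformation law of $\mathcal{K}_\pi$ under $U_\pi(g)$, the $G$-equivariance of $T_{\pi,\eta}$, and the $\Psi$-cocycle identity of the previous paragraph, this value collapses to $\Psi_{\pi,\eta}(e,z)^*$, as asserted.
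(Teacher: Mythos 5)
Your treatment of parts (2) and (3) is correct and essentially the same as the paper's: part (2) is the reproducing-kernel computation behind Lemma~\ref{lem:LKTembedding}, and part (3) follows from Frobenius reciprocity in the form $W(F)=T_{\pi,\eta}F(e)$ combined with Corollary~\ref{cor:IntertwinerInTermsOfKernel} and the explicit adjoint formula \eqref{eq:FormulaPsiStar}. The algebraic preliminaries (independence of the chosen $P^+K_\C N_\C$-decomposition, the $N$- and $G$-equivariance of the kernel, linear independence of the $T_{\pi,\eta}$ via injectivity of $\overline\eta\mapsto m(\,\cdot\otimes\overline\eta)$) are also in order.

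The genuine gap is in part (1): you identify the square-integrability estimate as ``the main obstacle'' and then do not carry it out. That estimate is Theorem~\ref{thm:EmbeddingsAreL2}, and it is the technical heart of the whole construction --- without it nothing lands in $L^2(G/N,\omega)$ and there is no discrete series contribution to speak of. Note also that the paper does not try to bound $T_{\pi,\eta}^*$ directly as an integral operator (your proposed route, which would require a Schur-test type control of $\Psi_{\pi,\eta}$ on all of $G/N\times\calD$ and looks genuinely harder). Instead it restricts $T_{\pi,\eta}$ to the lowest $K$-type, where Lemma~\ref{lem:LKTembedding} gives the closed formula $T_{\pi,\eta}\xi(x)=\omega(n_\C^+(x))^{-1}\circ A_\eta^*\circ\pi(k_{\C,N}^+(x))^{-1}\xi$, and computes $\|T_{\pi,\eta}\xi\|^2_{L^2(G/N,\omega)}$ explicitly: the integral over $G/N$ is reduced, via the integration formulas for $G/N$ and for $L=(K\cap L)A(K\cap L)$ together with Schur orthogonality, to an integral over $A$, and the $\SU(1,1)$-reduction of Corollary~\ref{thm:pkn2} makes $\omega(n_\C^+(a))^{-1}$ and $\pi(k_{\C,N}^+(a))^{-1}$ completely explicit on a highest restricted weight vector; convergence is then read off to be exactly the Harish-Chandra condition \eqref{eq:DSconditionMuJ}. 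Once the lowest $K$-type lands in $L^2$, irreducibility and unitarity of $\calH_\pi$ give the unitary (up to scalar) extension. Finally, the multiplicity upper bound that you treat as external input is not external in the paper: it comes from Lemma~\ref{lem:IntertwinerFockSpaceToLKT}, which classifies all $(N_\C\cap K_\C P^-)$-intertwiners $\calF_0\to\calV_\pi$ as the operators $A_\eta$ with $\eta\in\calV_\pi$, whence $\dim\Hom_N(\calH_\pi^\infty,\calF)\le\dim\calV_\pi$ (Corollary~\ref{cor:UpperBoundWhittaker}). Both of these ingredients need to be supplied before part (1) can be considered proved.
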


Items \eqref{thm:IntroC2} and \eqref{thm:IntroC3} show that the kernel $\Psi_{\pi,\eta}$ contains both information about the Whittaker vector (when restricted to $\{e\}\times\calD$) and about the lowest $K$-type embedding (when restricted to $G\times\{o\}$).

\subsection*{Relation to other work}

This paper extends parts of the results in our previous work \cite{FOO24} to the case of non-tube type groups. In \cite{FOO24} we also give explicit formulas for Whittaker vectors in different models of the holomorphic discrete series. This should also be possible for non-tube type groups using the $L^2$-model obtained by Ding in \cite{Din99}. Moreover, in \cite{FOO24} we show that the holomorphic discrete series contribution to $L^2(G/N,\omega)$ is given by the boundary values of functions in a Hardy space of holomorphic sections on a complex domain in $G_\C/N_\C$ containing $G/N$ in its boundary. We expect that similar results can be obtained in the more general case using the same methods.

\subsection*{Structure of the paper}

In Section~\ref{sec:BoundedDomains} we recall some basic structure theory for Hermitian Lie groups and the associated bounded symmetric domains. This is used in Section~\ref{sec:Inclusion} to show Theorem~\ref{thm:IntroA}. Section~\ref{sec:HoloDS} is about the standard realization of holomorphic discrete series representations on a space of holomorphic functions on the bounded domain. In Section~\ref{sec:FockSpaceReps} we introduce the irreducible unitary representations $\omega$ of $N$ that define the Whittaker space $L^2(G/N,\omega)$ and realize them on a Fock space of holomorphic functions which are square integrable with respect to a Gaussian measure. Finally, Section~\ref{sec:WhittakerModels} provides the construction of the embeddings $T_{\pi,\eta}$ of the holomorphic discrete series into $L^2(G/N,\omega)$ and the proofs of Theorems~\ref{thm:IntroB} and \ref{thm:IntroC}.

\section{Bounded symmetric domains}\label{sec:BoundedDomains}

In this section we recall some basic standard facts about bounded symmetric domains. For more details we refer the reader to \cite{KW65b,KW65a,Loo77,M64, Sa80,Sch84}.

\subsection{The Harish-Chandra realization of bounded domains}\label{subHC}

In this article we assume that $G$ is a connected simple Lie group with Lie algebra $\fg$. We assume that $G$ is
contained as a closed subgroup in a simply connected complex Lie group $G_\C$ with Lie algebra $\fg_\C$. Let
$\sigma_\fg : \fg_\C\to \fg_\C$ be the conjugation with respect to $\fg$, $\sigma_\fg (X+iY) = X-iY$ for $X,Y\in \fg$.
Then $\sigma_\fg$ can be lifted to a homomorphism $\sigma_G :G_\C\to G_\C$ such that $G\subseteq G_\C^{\sigma_G}$ is open.

Let $\theta_\fg:\fg\to\fg$ be a Cartan involution which we extend to a complex-linear involution $\fg_\C\to\fg_\C$. It lifts to an involution $\theta_G:G_\C\to G_\C$ such that $K=G^{\theta_G}$ is a maximal compact subgroup of $G$ with complexification $K_\C=G_\C^{\theta_G}$. If the role of $G$ or $\fg$ is clear from the context, we simply write $\theta$ for both involutions.

Denote by $\fg = \fk \oplus \fp$ the corresponding Cartan decomposition of $\fg$. We assume that $G/K$ is a bounded symmetric domain. This happens if and only if the center $\fz $ of $\fk$ is non-zero, and in that case it is one-dimensional. There exists an element $z_0\in \fz$, unique up to sign, such 
that $\ad(z_0)$ has eigenvalues $0$, $\pm i$ on $\fg_\C$. The zero-eigenspace is $\fk_\C$. We write
$$ \fp^\pm = \{X\in \fg_\C \: [z_0,X] = \pm iX\}. $$
Then $\fp_\C= \fp^+\oplus \fp^-$ and $\fp^\pm $ are abelian Lie algebras and the corresponding analytic subgroups $P^\pm = \exp \fp^\pm\subseteq G_\C$ are normalized by $K_\C$. The
set $P^+K_\C P^-$ is open and dense in $G_\C$ and the multiplication
map
$$ P^+\times K_\C \times P^-\to G_\C, \quad (p^+,k,p^-)\mapsto p^+k p^- $$
is a holomorphic diffeomorphism into its image. 
For $g\in P^+K_\C P^-$ we write $g = p^+(g)k_\C (g) p^-(g)$.
We have $G\subseteq 
P^+K_\C P^-$,  $G\cap K_\C P^-= K$ and the map
$$ \iota: G/K \to \fp^+, \quad gK \mapsto \log_{\fp^+}(p^+(g)) $$ 
is a well defined $G$-equivariant embedding. Its image $\calD=\iota(G/K)$ is a bounded circular domain in the complex vector space $\fp^+$, the Harish-Chandra realization of $G/K$, and we write
\[ g\cdot Z = \iota(g\exp Z) \qquad (g\in G,Z\in\calD) \]
for the corresponding action of $G$ on $\calD$.

Embedding $\fp^+$ into $G_\C/K_\C P^-$ by $Z\mapsto\exp(Z)K_\C P^-$, we can consider the action of $G_\C$ on $\calD$ and $\fp^+$. Clearly, $P^+$ acts on $\fp^{+}$ by translation $\exp (X)\cdot Z= Z+X$ and $K_\C$ acts linearly by $k\cdot Z = \Ad (k)Z$.

\subsection{Strongly orthogonal roots}\label{subsSOR}

Let $\ft$ be a Cartan subalgebra of $\fg$ containing $z_0$, then $\ft\subseteq \fk$. Let $\Dgt$ be the roots of $\ft_\C$ in $\fg_\C$ and for $\alpha\in\Dgt$ write $\fg_\C^\alpha$ for the corresponding root space. We define subsets
\begin{align*}
\Dc &= \{\alpha \in\Dgt\: \fg_\C^\alpha \subseteq \fk_\C\} = \{\alpha \in \Dgt\: \alpha (z_0)=0\}\\
\Dn &= \{\alpha \in \Dgt \: \fg_\C^\alpha \subseteq \fp_\C\} = \{\alpha\in \Dgt\: \alpha (z_0) \not= 0\}\\
\Dnp & = \{\alpha \in \Dgt \: \fg_\C^\alpha \subseteq \fp^+\} = \{\alpha \in \Dn \: \alpha (z_0)=i\}
\end{align*}
and call the roots in $\Dc$ compact and the roots in $\Dn$ non-compact. Choose a positive system of roots $\Dgtp $ such that $\Dnp \subseteq \Dgtp $. Then $\Dcp := \Dgtp\cap \Dc$ is a positive system in
$\Dc$ and $\Dgtp = \Dnp \cup \Dcp$. 
 
Let $\gamma_1,\ldots , \gamma_r$ be a maximal set of (long) strongly orthogonal roots in $\Dnp$ constructed in the
usual way: $\gamma_1$ is the maximal root in $\Dnp$ and $\gamma_{j+1} $ the maximal root in $\Dnp$ strongly
orthogonal to $\gamma_1,\ldots ,\gamma_j$. Let $h_j\in i\ft$ be the corresponding
co-root vector. In particular, $\gamma_i(h_j) = 2\delta_{ij}$. Let $\fc = \sum_{j=1}^r \R ih_j \subseteq \ft$ and
$\fc_\C$ its complexification. Abusing notation, we also write $\gamma_i$ for its restriction to $\fc_\C$. Moore's Theorem describes the roots of $\fc_\C$ in $\fg_\C$, i.e. the restrictions of roots in $\Delta(\fg_\C,\ft_\C)$ to $\fc_\C$. Here, we call a restricted root compact resp. non-compact if it is the restriction of a compact resp. non-compact root.

\begin{theorem}[Moore's Theorem, first version]
	For the set $\Delta (\fg_\C,\fc_\C)$ of restricted roots there are two possibilities:
	\begin{enumerate}
		\item $\calD$ is of tube type. In this case
		$$ \qquad\ds \Delta (\fg_\C,\fc_\C) =\{\pm\gamma_i:1\leq i\leq r\} \cup \{\tfrac{1}{2}(\pm\gamma_i\pm \gamma_j)\: 1\le i<j \le r\}, $$
		the roots $\pm\gamma_i$ and $\pm\frac{1}{2}(\gamma_i + \gamma_j)$ are non-compact and the roots $\pm\frac{1}{2}(\gamma_i - \gamma_j)$ are compact.
		\item $\calD$ is of non-tube type. In this case
		$$ \qquad\quad \ds \Delta  (\fg_\C,\fc_\C)=\{\pm\tfrac{1}{2}\gamma_i,\pm\gamma_i:1\leq i\leq r\} \cup \{\tfrac{1}{2}(\pm\gamma_i\pm \gamma_j)\: 1\le i<j \le r\}, $$
		the roots $\pm\gamma_i$, $\pm\tfrac{1}{2}\gamma_i$ and $\pm\frac{1}{2}(\gamma_i + \gamma_j)$ are non-compact and the roots $\pm\tfrac{1}{2}\gamma_i$ and $\pm\frac{1}{2}(\gamma_i - \gamma_j)$ are compact.
	\end{enumerate}
\end{theorem}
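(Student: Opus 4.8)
The plan is to reduce the description of $\Delta(\fg_\C,\fc_\C)$ to a rank-one and rank-two analysis using the strongly orthogonal roots $\gamma_1,\dots,\gamma_r$. First I would record the basic facts about restriction to $\fc_\C$: since each $\gamma_i$ takes values $\gamma_i(h_j)=2\delta_{ij}$ on the co-roots spanning $\fc$, any restricted root $\mu=\alpha|_{\fc_\C}$ is determined by the integers $\alpha(h_1),\dots,\alpha(h_r)$, and the strong orthogonality of the $\gamma_i$ severely constrains which integer vectors can occur. The key structural input is the Harish-Chandra/Korányi--Wolf theory of the partial Cayley transform: conjugating by the Cayley element built from $\gamma_1,\dots,\gamma_r$ turns $\fc_\C$ (up to the obvious identification) into a maximal split abelian subalgebra $\fa$ of the ``$c$-dual'' real form, and the restricted root system $\Delta(\fg,\fa)$ is of type $C_r$ in the tube case and type $BC_r$ in the non-tube case. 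This is exactly the dichotomy I want, so the proof is essentially the translation of that classical statement into the language of restrictions of roots of $\ft_\C$ to $\fc_\C$.

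Concretely, I would proceed as follows. Step one: observe that $\Delta(\fg_\C,\fc_\C)$ is stable under the Weyl group generated by the reflections in the $\gamma_i$ (these come from the $\sl_2$-triples attached to $\gamma_i$, which lie in $\fg$), and this Weyl group contains all sign changes and permutations of the coordinates $\alpha(h_j)$; hence it suffices to determine the restricted roots up to these symmetries, i.e. to find their ``shapes''. Step two: bound the possible values. For a noncompact root $\alpha\in\Dnp$, pairing with each $h_j$ and using that the $\gamma_i$ are strongly orthogonal and long, one shows $\alpha(h_j)\in\{0,\pm 1,\pm 2\}$ and that the multiset $\{\alpha(h_j)\}_j$ has at most one nonzero-beyond-$\pm1$ entry — giving the possible restrictions $\gamma_i$, $\tfrac12(\gamma_i+\gamma_j)$, $\tfrac12\gamma_i$ (the last only potentially occurring), plus their negatives and, from compact roots, $\tfrac12(\gamma_i-\gamma_j)$ and $\tfrac12\gamma_i$. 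Step three: decide which of these actually occur. The roots $\gamma_i$ and $\tfrac12(\gamma_i\pm\gamma_j)$ always occur (the former by construction, the latter because the $\gamma_i$ are a maximal strongly orthogonal set, so $\gamma_i\pm\gamma_j$ for $i\neq j$ can only fail strong orthogonality by \emph{not} being roots themselves, which forces the half-sum combinations to appear as restrictions of genuine roots of $\ft_\C$). The presence or absence of $\pm\tfrac12\gamma_i$ is precisely the tube vs.\ non-tube distinction: $\calD$ is of tube type iff no root of $\ft_\C$ restricts to $\tfrac12\gamma_i$, which by the structure of the Cayley transform is equivalent to $\fp^+$ being a Jordan algebra, i.e.\ to $\calD$ being of tube type. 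Step four: identify the compact/noncompact labels by checking the value of $\alpha(z_0)$, which restricts consistently because $z_0\in\fc$ up to the Cayley normalization; $\gamma_i$ and $\tfrac12(\gamma_i+\gamma_j)$ are noncompact, $\tfrac12(\gamma_i-\gamma_j)$ is compact, and $\tfrac12\gamma_i$ (when present) splits into a compact and a noncompact piece according to whether the underlying root of $\ft_\C$ lies in $\fk_\C$ or $\fp_\C$.

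The main obstacle I anticipate is Step three — proving that the half-sum roots $\tfrac12(\pm\gamma_i\pm\gamma_j)$ genuinely occur as restrictions and that $\tfrac12\gamma_i$ occurs exactly in the non-tube case, rather than merely bounding the set of possibilities. The cleanest route is to invoke the explicit root-space description from Harish-Chandra's construction (or equivalently the Jordan-algebraic model of $\fp^+$ via Peirce decomposition with respect to the tripotents $e_i=\exp$-images of the $\gamma_i$ root vectors): the Peirce spaces $\fp^+_{ij}$ ($i<j$) are nonzero and account for the $\tfrac12(\gamma_i+\gamma_j)$ restrictions, and the non-tube type is characterized by the nonvanishing of the Peirce $\tfrac12$-spaces $\fp^+_{i}$ attached to a single $e_i$, which account for the $\tfrac12\gamma_i$ restrictions. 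I would either cite this from \cite{Loo77} or \cite{Sa80} or reprove the minimal amount needed. Everything else is the bookkeeping of Steps one, two and four, which is routine once the symmetry and the value bounds are in place.
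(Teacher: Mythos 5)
The paper does not actually prove this statement: it is recalled in Section~\ref{sec:BoundedDomains} as a standard fact, with the reader referred to \cite{KW65b,KW65a,Loo77,M64,Sa80}, so there is no in-paper argument to compare against. Measured against the classical proofs in those references, your overall strategy (bound the integers $\alpha(h_j)$, exploit maximality of the strongly orthogonal family together with $[\fp^+,\fp^+]=0$, and characterize tube type by the vanishing of the Peirce $\tfrac12$-spaces) is the right one. But two of your intermediate justifications fail as written. In Step one, the group generated by the reflections $s_{\gamma_1},\dots,s_{\gamma_r}$ is only $(\Z/2\Z)^r$ acting by sign changes --- the $\gamma_i$ are mutually orthogonal, so these reflections commute and generate nothing more; it does \emph{not} contain the coordinate permutations. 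The permutations come from Weyl group elements attached to compact roots restricting to $\tfrac12(\gamma_i-\gamma_j)$, whose existence is part of what you are trying to prove, so invoking the full hyperoctahedral symmetry at the outset is circular. The standard repair is to drop the symmetry reduction entirely and bound each $\alpha(h_j)$ directly, using that $\langle\alpha,\beta\rangle\ge 0$ for all $\alpha,\beta\in\Dnp$ (a consequence of $[\fp^+,\fp^+]=0$, which your sketch never explicitly uses) together with the fact that $\gamma_1$ is the highest root.

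In Step three, the assertion that maximality of the strongly orthogonal set ``forces the half-sum combinations to appear as restrictions'' is a non sequitur: maximality plus $[\fp^+,\fp^+]=0$ only yields that every $\alpha\in\Dnp$ has $\alpha(h_i)>0$ for \emph{some} $i$; it gives no root restricting to $\tfrac12(\gamma_i+\gamma_j)$ for a \emph{given} pair $i<j$. That existence is the nonvanishing of the Peirce spaces attached to the pair, which rests on the simplicity of $\fg$ and must be imported from \cite{Loo77} or \cite{Sa80} --- as you yourself anticipate in your closing paragraph, so this gap is one you have correctly located and the proposed fix is the right one. A smaller inaccuracy in Step four: $z_0$ lies in $\fc$ only in the tube case; in the non-tube case $\alpha(z_0)$ is \emph{not} determined by $\alpha|_{\fc_\C}$, which is exactly why $\tfrac12\gamma_i$ appears in both the compact and the non-compact lists of the statement. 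Your conclusion there is nevertheless correct because the paper defines compactness of a restricted root via the unrestricted root, not via a value on $\fc$. With Step one replaced by the direct estimate and Step three by the citation you already propose, the argument goes through.
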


\subsection{The $\su (1,1)$-embeddings}\label{SuSSU(11)}

Let for the moment
\[G =
\SU (1,1) = \left\{ \begin{pmatrix} a & b\\ \bar b & \bar a\end{pmatrix}\: |a|^2 - |b|^2=1\right\} \subseteq \SL(2,\C)\]
with Lie algebra 
\[ \fg = \su (1,1) =\left\{\begin{pmatrix} ix & z \\ \bar z & - ix\end{pmatrix}\: x\in \R, z\in \C\right\}\subseteq \sl(2,\C).\]
We have $\gs_G  (g) = I_{1,1} g^{-*} I_{1,1}$ and
$\sigma_\fg (X) = - I_{1,1} X^*I_{1,1}$, with the notation $I_{1,1} = \diag ( 1, -1)$ and $X^*=\overline{X}^\top$. We choose $\theta_G(g) = I_{1,1}gI_{1,1}$ as a Cartan involution, then $\theta_{\fg} (X) = I_{1,1}XI_{1,1}$, $\fk = \{\diag( ix,-ix)\: x\in \R\}$ and $K=\{\diag(\gamma ,\gamma^{-1})\: |\gamma | =1\}$.  

Let
\[z_0 =\begin{pmatrix} \frac{i}{2} & 0 \\ 0 & -\frac{i}{2} \end{pmatrix} ,
\quad 
e_0 =
\begin{pmatrix} 0  & 1\\ 0 & 0 \end{pmatrix}, \quad 
f_0=\begin{pmatrix} 0 & 0 \\ 1 & 0  \end{pmatrix}, \]
then
\[ \fk_\C = \C z_0,\quad \fp^+ = \C e_0, \quad \fp^- = \C f_0 \]
and
\[ h_0 = [e_0,f_0] = \begin{pmatrix} 1 & 0\\ 0 & -1 \end{pmatrix} = -2iz_0 .\]
We have
\[P^+K_\C P^- = \left\{g=\begin{pmatrix} a & b \\ c & d\end{pmatrix} : ad - b c =1, d\not= 0\right\}\]
Identifying $K_\C \simeq \C^*$ and $\fp^+=\C e_0\simeq \C$ we have for a matrix $g$ as above
\begin{equation}
	k_\C (g) = \frac{1}{d}\quad \text{and} \quad p^+(g) = \frac{b}{d}.\label{eq:KandP+ProjectionsSL2}
\end{equation}
Therefore, the Harish-Chandra embedding is given by
\[ \iota:G/K\to\fp^+, \quad \begin{pmatrix} a & b \\ \bar b & \bar a\end{pmatrix} K \mapsto \frac{b}{\bar a} \in \calD_0=\{z\in \C \: |z| <1\}\]
and the corresponding action is the usual action by fractional linear transformations:
\[g\cdot z = \frac{az + b}{cz + d} \qquad \mbox{for }g=\begin{pmatrix}a&b\\c&d\end{pmatrix},z\in\calD_0.\]

Now to the general case.  For every $1\leq j\leq r$ the root spaces $\fg_\C^{\pm\gamma_j}$ are one-dimensional and we choose $e_j\in\fg_\C^{ \gamma_j}$, so that with $f_j=\sigma_\fg (e_j) \in\fg_\C^{-\gamma_j}$ we have $[e_j,f_j]=h_j$ with $h_j$ as in Section~\ref{subHC}. Let $x_j = e_j + f_j\in \fg$.

 For $j=1,\ldots ,r$ define   $\varphi_j :\fsl (2,\C) \to \fg_\C$   by
\[h_0 =\begin{pmatrix} 1 & 0 \\ 0 & -1\end{pmatrix}\mapsto  h_j
,\quad e_0 = \begin{pmatrix} 0 & 1\\ 0 & 0 \end{pmatrix} \mapsto e_j
\quad\text{and} \quad f_0= \begin{pmatrix} 0 & 0 \\ 1 & 0 \end{pmatrix} \mapsto f_j .\]
Then $\varphi_j$ is a Lie algebra homomorphism such that $\varphi_j\circ \theta_{\su (1,1)}
= \theta_\fg \circ \varphi_j$ and $\varphi_j\circ \sigma_{\su (1,1)}
= \sigma_\fg \circ \varphi_j$. As $\SL (2,\C)$ is simply connected, there exists a unique homomorphism, which we
will denote by the same letter, $\varphi_j : \SL (2,\C)\to G_\C$ such that 
$\varphi_j (\exp X) = \exp (\varphi_j (X))$ and 
$\varphi_j\circ \theta_{\SU (1,1)} = \theta_G \circ \varphi_j$ as well as $\varphi_j\circ \sigma_{\SU (1,1)} = \sigma_G \circ \varphi_j$.
It follows in particular that $\varphi_j (\SU (1,1)) \subseteq G$.
As the roots $\gamma_1,\ldots \gamma_r$ are strongly orthogonal it follows that the algebras $\fg_\C^j = \varphi_j (\fsl (2,\C))$ commute pairwise.

\begin{lemma}
The subspace $\fa = \bigoplus_{j=1}^r\R x_j$ is maximal abelian in $\fp$.
\end{lemma}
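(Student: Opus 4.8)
The plan is to exhibit $\fa = \bigoplus_{j=1}^r \R x_j$ as a maximal abelian subspace of $\fp$ by combining the strong orthogonality of the $\gamma_j$ with a dimension/rank count. First I would check that $\fa \subseteq \fp$ and that $\fa$ is abelian: each $x_j = e_j + f_j$ lies in $\fg$ (since $f_j = \sigma_\fg(e_j)$, so $x_j$ is $\sigma_\fg$-fixed) and in $\fp_\C$ (since $e_j \in \fp^+$, $f_j \in \fp^-$), hence $x_j \in \fp$; and $[x_i,x_j] = 0$ for $i \neq j$ because the $\frakg_\C^j = \varphi_j(\fsl(2,\C))$ commute pairwise, as noted just before the statement. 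So $\fa$ is an abelian subspace of $\fp$ of dimension $r$.

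Next I would show $\fa$ is maximal abelian in $\fp$. The standard way is to observe that $\dim\fa = r$ equals the real rank of $\fg$, i.e. the common dimension of all maximal abelian subspaces of $\fp$; then any abelian subspace of $\fp$ of that dimension is automatically maximal. To see that $r$ is the real rank, note that by construction $r$ is the number of strongly orthogonal noncompact roots $\gamma_1,\dots,\gamma_r$, which is exactly the real rank of a Hermitian Lie group (this is the Harish-Chandra strongly orthogonal root construction, and $\fc = \sum \R i h_j$ together with the $x_j$ realizes the relevant $\fsl(2)$-triples). Alternatively, and more self-containedly, I would verify directly that $\fa$ is its own centralizer in $\fp$: suppose $X \in \fp$ commutes with all $x_j$. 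Using the joint structure of the commuting copies $\varphi_j(\fsl(2,\C))$ and Moore's theorem (which describes $\Delta(\fg_\C,\fc_\C)$ and hence the $\ad(\fc)$-eigenspace decomposition of $\fg_\C$), one decomposes $X$ according to $\ad(h_j)$-weights; commuting with each $x_j = \varphi_j(e_0+f_0)$ forces $X$ to lie in the zero weight space for the Cayley-transformed action, and a short argument pins $X$ down to $\fa$.

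I expect the main obstacle to be the last step: verifying that the centralizer of $\fa$ in $\fp$ is exactly $\fa$, rather than something larger coming from the centralizer $\frakm$ of $\fa$ in $\frakk$. The cleanest route is to Cayley-transform: the element $c = \exp\big(\frac{\pi i}{4}\sum_j(e_j - f_j)\big)$ (or the product of the $\varphi_j$ of the $\SU(1,1)$ Cayley elements) conjugates $\fa$ onto a maximal abelian subspace of the $+1$-eigenspace of a suitable involution, and transports the problem to a setting where $\ad(\fa)$ acts diagonalizably with the restricted root system $\Delta(\fg,\fa)$ given by Moore's theorem; there the self-centralizing property of a Cartan subspace is standard. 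So I would invoke the $\varphi_j$ and Moore's theorem to reduce to the restricted root space decomposition, then quote that a maximal set of strongly orthogonal roots has cardinality equal to $\dim\fa = \operatorname{rank}_\R \fg$, concluding maximality.
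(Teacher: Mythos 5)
The paper gives no proof of this lemma at all: it sits in Section~\ref{sec:BoundedDomains}, which only ``recalls basic standard facts'' with pointers to Kor\'anyi--Wolf, Moore, Satake, etc., so there is no internal argument to compare yours against. Your verification of the easy half is correct and complete: $x_j=e_j+f_j$ is $\sigma_\fg$-fixed and lies in $\fg_\C^{\gamma_j}\oplus\fg_\C^{-\gamma_j}\subseteq\fp^+\oplus\fp^-$, hence $x_j\in\fp$; the $x_j$ are linearly independent because they sit in distinct root-space pairs; and $[x_i,x_j]=0$ by strong orthogonality. Your primary route to maximality --- quote that the length $r$ of a maximal chain of strongly orthogonal noncompact roots equals $\operatorname{rank}_\R\fg$, so the $r$-dimensional abelian subspace $\fa\subseteq\fp$ must coincide with any maximal abelian subspace containing it --- is a legitimate citation and is in effect exactly what the paper does. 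The caveat is that the cited equality is not independent of the lemma: its standard proof \emph{is} the proof that $\fa$ is maximal abelian, so as a self-contained argument this route begs the question.

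That caveat becomes a genuine gap in your ``more self-contained'' alternative. The step ``commuting with each $x_j$ forces $X$ into the zero weight space \dots\ and a short argument pins $X$ down to $\fa$'' is not an argument but a restatement of the claim $\frakz_\fp(\fa)=\fa$, which is where all the content lies (note that $\frakz_\fg(\fa)$ genuinely exceeds $\fa$ --- it contains $\frakm=\frakz_\fk(\fa)$ --- so everything hinges on the intersection with $\fp$). Likewise, invoking ``the self-centralizing property of a Cartan subspace'' after Cayley transform is circular: being self-centralizing in $\fp$ is equivalent to being maximal abelian in $\fp$, and Moore's theorem in its second version (the description of $\Delta(\fg,\fa)$, Theorem~\ref{thm:Moore2}) already presupposes that $\fa$ is maximal abelian. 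A non-circular version of your Cayley idea does exist and is the step you would need to write out: Moore's theorem in its \emph{first} version concerns restrictions of roots of $\ft_\C$ to $\fc_\C$ and shows that no noncompact root restricts to zero, whence $\frakz_{\fg_\C}(\fc_\C)=\ft_\C\oplus\bigoplus_{\alpha|_{\fc_\C}=0}\fg_\C^\alpha\subseteq\fk_\C$; transporting by $\Ad(c)$ (which carries $\fc$ to $\fa$) reduces the lemma to checking that $\Ad(c)\fk_\C\cap\fp_\C$ meets $\frakz_{\fg_\C}(\fa_\C)$ only in $\fa_\C$. That final intersection computation is the actual work, and it is absent from your sketch; without it (or an honest citation, which is what the paper opts for), the maximality claim is not established.
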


For $\SU (1,1)$ we let 
\[c_0 =\exp \left( -\frac{\pi }{4} (e_0-f_0)\right)  = \frac{1}{\sqrt{2}} \begin{pmatrix}1 & -1\\ 1 & 1\end{pmatrix}\in \SL (2,\R)\]
and
\[c_j = \varphi_{j} (c_0) = \exp \left( -\frac{\pi }{4} (e_j-f_j)\right) .\]
Then simple matrix multiplication shows that 
$ \Ad (c_0)  h_0 = e_0 + f_0 = x_0$ and hence $\Ad (c_j) h_j = x_j$. Let
$c=c_1\cdots c_r$. Then we have 

\begin{lemma} $\Ad (c) \fc = \fa$.\end{lemma}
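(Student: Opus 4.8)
The plan is to reduce the statement to the single $\SU(1,1)$-identity $\Ad(c_0)h_0=x_0$ already recorded above, exploiting that the strongly orthogonal roots $\gamma_1,\dots,\gamma_r$ produce $r$ pairwise commuting copies of $\fsl(2,\C)$ inside $\fg_\C$, so that $\Ad(c)$ acts "diagonally" with respect to them.

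First I would isolate the commutation structure. Since $\gamma_1,\dots,\gamma_r$ are strongly orthogonal, the subalgebras $\fg_\C^j=\varphi_j(\fsl(2,\C))$ commute pairwise. Put $Y_j=-\tfrac{\pi}{4}\varphi_j(e_0-f_0)\in\fg_\C^j$, so that $c_j=\exp Y_j$. Then $[Y_i,Y_j]=0$ for $i\neq j$ gives $c_ic_j=c_jc_i$, hence $\Ad(c)=\Ad(c_1)\circ\cdots\circ\Ad(c_r)$, and moreover $\Ad(c_i)=\exp(\ad Y_i)$ restricts to the identity on $\fg_\C^j$ whenever $i\neq j$, because $[Y_i,Z]=0$ for every $Z\in\fg_\C^j$.

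Next comes the per-copy computation. For fixed $j$, both the coroot vector $h_j$ and the vector $x_j=\varphi_j(e_0+f_0)$ lie in $\fg_\C^j$, so every factor $\Ad(c_i)$ with $i\neq j$ fixes $h_j$ and $x_j$. Applying $\varphi_j$ to $\Ad(c_0)h_0=x_0$ gives $\Ad(c_j)h_j=x_j$, and therefore $\Ad(c)h_j=\Ad(c_j)h_j=x_j$. Since $h_1,\dots,h_r$ span $\fc$ (they are linearly independent, e.g. because $\gamma_i(h_j)=2\delta_{ij}$) and $x_1,\dots,x_r$ span $\fa$, and since $\Ad(c)$ is invertible with $\dim\fc=r=\dim\fa$ by the preceding lemma, we conclude $\Ad(c)\fc=\fa$.

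I do not expect a serious obstacle here; the one point that genuinely needs the hypotheses is the claim that the factors $\Ad(c_i)$ with $i\neq j$ act trivially on the $j$-th copy $\fg_\C^j$, which is exactly where strong orthogonality of the roots — equivalently $[\fg_\C^i,\fg_\C^j]=0$ — enters. Everything else is formal manipulation with commuting one-parameter subgroups, and the content of the statement is essentially the transport of the $\SU(1,1)$ Cayley computation through the maps $\varphi_j$.
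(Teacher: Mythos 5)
Your proof is correct and follows essentially the same route as the paper, which records $\Ad(c_0)h_0=x_0$ and hence $\Ad(c_j)h_j=x_j$ immediately before the lemma and leaves implicit exactly the commutation and spanning steps you supply (pairwise commuting copies $\fg_\C^j$, triviality of $\Ad(c_i)$ on $\fg_\C^j$ for $i\neq j$, and the dimension count). The only quibble is notational and inherited from the source: since the paper sets $\fc=\sum_j\R\, ih_j$ while $\fa=\bigoplus_j\R x_j$, a basis of $\fc$ is $ih_1,\dots,ih_r$ rather than $h_1,\dots,h_r$, so the identity to quote is $\Ad(c)(ih_j)=ix_j$ --- a harmless factor of $i$ that neither you nor the paper addresses explicitly.
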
 

Define $\lambda_j =\gamma_j\circ \Ad (c)^{-1} \in \fa^* $.
Then Moore's Theorem implies:  

\begin{theorem}[Moore's Theorem, second version]\label{thm:Moore2}
	For the set $\Delta (\fg,\fa)$ of restricted roots there are two possibilities:
	\begin{enumerate}
		\item $\calD$ is of tube type. In this case
		$$ \qquad\ds \Delta (\fg,\fa) =\{\pm\lambda_i:1\leq i\leq r\} \cup \{\tfrac{1}{2}(\pm\lambda_i\pm \lambda_j)\: 1\le i<j \le r\}. $$
		\item $\calD$ is of non-tube type. In this case
		$$ \qquad\quad \ds \Delta  (\fg,\fa)=\{\pm\tfrac{1}{2}\lambda_i,\pm\lambda_i:1\leq i\leq r\} \cup \{\tfrac{1}{2}(\pm\lambda_i\pm \lambda_j)\: 1\le i<j \le r\}. $$
	\end{enumerate}
\end{theorem}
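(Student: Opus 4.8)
The plan is to deduce the structure of $\Delta(\fg,\fa)$ directly from the first version of Moore's Theorem (describing $\Delta(\fg_\C,\fc_\C)$) by transporting it through the Cayley transform $\Ad(c)$. The key observation is that $\Ad(c)$ is an automorphism of $\fg_\C$ sending $\fc_\C$ to $\fa_\C$ (this follows from the preceding lemma $\Ad(c)\fc=\fa$ upon complexification), and therefore induces a bijection between the root systems $\Delta(\fg_\C,\fc_\C)$ and $\Delta(\fg_\C,\fa_\C)$ which, by the very definition $\lambda_j=\gamma_j\circ\Ad(c)^{-1}$, carries $\gamma_j$ to $\lambda_j$. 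Since $\Ad(c)$ is complex-linear, it sends each restricted root $\beta\in\Delta(\fg_\C,\fc_\C)$, written as a rational-coefficient combination of the $\gamma_j$, to the same combination of the $\lambda_j$. Hence the first version of Moore's Theorem immediately yields the list of elements of $\Delta(\fg_\C,\fa_\C)$ with $\gamma_i$ replaced by $\lambda_i$ throughout, in both the tube and non-tube cases.

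The remaining point is to pass from $\Delta(\fg_\C,\fa_\C)$ to the \emph{real} restricted root system $\Delta(\fg,\fa)$, i.e. to check that the nonzero restrictions to $\fa$ (rather than $\fa_\C$) of roots in $\Delta(\fg_\C,\ft_\C)$ give exactly the same set. First I would note that since $\fa\subseteq\fp$ is maximal abelian in $\fp$ (by the lemma in Section~\ref{SuSSU(11)}), $\Delta(\fg,\fa)$ is by definition the set of nonzero weights of $\ad(\fa)$ on $\fg$, and that the $\fa$-weight space decomposition of $\fg$ complexifies to the $\fa_\C$-weight space decomposition of $\fg_\C$; thus $\Delta(\fg,\fa)$ and $\Delta(\fg_\C,\fa_\C)$ are literally the same subset of $\fa^*$ once we identify $\fa^*$ with the $\sigma_\fg$-fixed (equivalently, real-valued on $\fa$) functionals inside $\fa_\C^*$. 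So it suffices to observe that each $\lambda_j$ is real-valued on $\fa$: indeed $h_j\in i\ft$ and $\gamma_j$ takes real values on $i\ft$, while $\Ad(c)$ intertwines the conjugations (as $c=\prod\varphi_j(c_0)$ with $c_0\in\SL(2,\R)$, each $\varphi_j$ commutes with $\sigma$), so $\lambda_j=\gamma_j\circ\Ad(c)^{-1}$ is $\sigma_\fg$-real, hence real-valued on $\fa$. The same then holds for all the half-sums and differences appearing in the list.

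Concretely, the steps are: (1) complexify $\Ad(c)\fc=\fa$ to get $\Ad(c)\fc_\C=\fa_\C$ and note $\Ad(c)$ is an isomorphism of complex Lie algebras; (2) conclude $\Delta(\fg_\C,\fa_\C)=\{\beta\circ\Ad(c)^{-1}:\beta\in\Delta(\fg_\C,\fc_\C)\}$, which by $\C$-linearity of $\Ad(c)^{-1}$ and $\lambda_j=\gamma_j\circ\Ad(c)^{-1}$ turns the first version of Moore's Theorem into the asserted list with $\gamma_i\rightsquigarrow\lambda_i$; (3) identify $\Delta(\fg,\fa)$ with $\Delta(\fg_\C,\fa_\C)$ via the $\sigma_\fg$-real structure and check that all the listed functionals are $\sigma_\fg$-real, using that $c\in\varphi_1(\SL(2,\R))\cdots\varphi_r(\SL(2,\R))$ intertwines $\sigma_{\SU(1,1)}$ and $\sigma_\fg$ and that $\gamma_j$ is real on $i\ft$. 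Finally, the compact/non-compact labeling is preserved because $\Ad(c)$ need not respect the Cartan decomposition but the \emph{definition} of a restricted root being compact or non-compact is inherited from whether it is a restriction of a compact or non-compact root of $\ft_\C$, and this property is carried along by the bijection $\beta\mapsto\beta\circ\Ad(c)^{-1}$ verbatim.

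The whole argument is essentially bookkeeping; the only place requiring a little care—and thus the main (mild) obstacle—is step (3), making precise the identification of the real restricted root system $\Delta(\fg,\fa)$ with the set of $\sigma_\fg$-fixed elements of $\Delta(\fg_\C,\fa_\C)$ and verifying the reality of the $\lambda_j$. Once that is in place, the statement of the theorem is just the first version of Moore's Theorem rewritten in the $\lambda$-coordinates, so no genuinely new computation is needed.
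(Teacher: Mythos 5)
Your overall route is exactly the paper's: the paper offers no separate proof, simply asserting that the first version of Moore's Theorem, together with $\Ad(c)\fc=\fa$ and the definition $\lambda_j=\gamma_j\circ\Ad(c)^{-1}$, yields the second version by transport of the restricted root system through the Cayley transform, which is what you carry out. One sub-claim in your step (3) is wrong, however: $c\notin G$, since $e_j-f_j\in i\fg$ (one checks $\sigma_{\su(1,1)}(e_0-f_0)=-(e_0-f_0)$, so in fact $\sigma_G(c)=c^{-1}$), hence $\Ad(c)$ does \emph{not} intertwine the conjugations and your reality argument for the $\lambda_j$ does not stand as written. The conclusion is still immediate: $\lambda_j(x_k)=\gamma_j(\Ad(c)^{-1}x_k)=\gamma_j(h_k)=2\delta_{jk}\in\R$, and more generally every restricted root relative to a maximal abelian $\fa\subseteq\fp$ is real-valued because $\ad(H)$ is symmetric for the inner product $-\kappa(\cdot,\theta\cdot)$ when $H\in\fp$. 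With that repair the argument is complete.
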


In the case where $\calD$ is of non-tube type, the root spaces $\fg^{\frac{1}{2}\lambda_j}$ have a complex structure. For this, let
\begin{equation}
	E = \sum_{j=1}^r\varphi_j\begin{pmatrix}i&-i\\i&-i\end{pmatrix} \in \bigoplus_{j=1}^r\fg^{\lambda_j}, \qquad \mbox{and} \qquad F = \theta(E) = \sum_{j=1}^r\varphi_j\begin{pmatrix}i&i\\-i&-i\end{pmatrix} \in \bigoplus_{j=1}^r\fg^{-\lambda_j}.\label{eq:DefE}
\end{equation}

\begin{lemma}[{see \cite[Lemma 2.2.4]{VR76} and \cite[Chapter III, \S2 and \S3]{Sa80}}]\label{lem:ComplexStructure}
	The map $J:=\ad(E)\circ\theta=\theta\circ\ad(F)$ on $\bigoplus_{j=1}^r\fg^{\frac{1}{2}\lambda_j}$ has the following properties:
	\begin{enumerate}
		\item $J^2=-\id$,
		\item $J(\fg^{\frac{1}{2}\lambda_j})=\fg^{\frac{1}{2}\lambda_j}$ for all $1\leq j\leq r$,
		\item For every $1\leq j\leq r$, the complexified root space $\fg^{\frac{1}{2}\lambda_j}_\C$ decomposes as
		$$ \fg^{\frac{1}{2}\lambda_j}_\C = \fg^{\frac{1}{2}\lambda_j,+}_\C\oplus\fg^{\frac{1}{2}\lambda_j,-}_\C, $$
		where
		$$ \fg^{\frac{1}{2}\lambda_j,\pm}_\C = \fg_\C^{\frac{1}{2}\lambda_j}\cap(\fk_\C\oplus\fp^\pm) = \{Z\in\fg_\C^{\frac{1}{2}\lambda_j}:JZ=\pm iZ\}. $$
	\end{enumerate}
\end{lemma}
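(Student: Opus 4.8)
I would prove the lemma as follows.

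The identity $\ad(E)\circ\theta=\theta\circ\ad(F)$ is automatic: since $\theta$ is an involutive automorphism of $\fg_\C$ and $F=\theta(E)$, we have $\theta\circ\ad(E)\circ\theta=\ad(\theta E)=\ad(F)$, so $J$ is well defined. The plan is to reduce everything to the pairwise commuting triples $\fg_\C^j=\varphi_j(\fsl(2,\C))$ and to Moore's Theorem. Write $E=\sum_{j=1}^r E_j$ and $F=\sum_{j=1}^r F_j$, where $E_j=\varphi_j\begin{pmatrix}i&-i\\i&-i\end{pmatrix}\in\fg^{\lambda_j}$ and $F_j=\theta(E_j)=\varphi_j\begin{pmatrix}i&i\\-i&-i\end{pmatrix}\in\fg^{-\lambda_j}$. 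Since $\fa\subseteq\fp$ we have $\theta|_\fa=-\id$, hence $\theta(\fg^{\frac12\lambda_j})\subseteq\fg^{-\frac12\lambda_j}$, while $\ad(E_k)$ raises $\fa$-weights by $\lambda_k$. By Theorem~\ref{thm:Moore2}, $\lambda_k-\frac12\lambda_j$ is a restricted root only for $k=j$ (in which case it equals $\frac12\lambda_j$), and $\frac32\lambda_j$ is never a restricted root. This yields property (2) at once, together with the fact that on $\fg^{\frac12\lambda_j}$ the operator $J$ coincides with $\ad(E_j)\circ\theta$.

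For property (1), I would work on a fixed $\fg^{\frac12\lambda_j}$: from $\theta\circ\ad(E_j)\circ\theta=\ad(F_j)$ one gets $J^2=\ad(E_j)\circ\ad(F_j)$ there, and since $\ad(E_j)$ raises $\fa$-weights by $\lambda_j$ while $\frac32\lambda_j$ is not a root, $\ad(E_j)$ annihilates $\fg^{\frac12\lambda_j}$; hence $J^2=\ad([E_j,F_j])$ on this subspace. A direct computation in $\fsl(2,\C)$ shows $[E_j,F_j]$ is a scalar multiple of $x_j$, and $x_j\in\fa$ acts on $\fg^{\frac12\lambda_j}$ by the scalar $\frac12\lambda_j(x_j)$; thus $J^2$ is a scalar operator, equal to $-\id$ with the normalization of \eqref{eq:DefE}.

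For property (3), extend $J$ complex-linearly; $J^2=-\id$ still holds on $\fg^{\frac12\lambda_j}_\C$, so $\fg^{\frac12\lambda_j}_\C=\ker(J-i)\oplus\ker(J+i)$. The key step is to show $\ker(J-i)\subseteq\fk_\C\oplus\fp^+$ — equivalently, by the $\sigma_\fg$-equivariance $\sigma_\fg J=J\sigma_\fg$ together with $\sigma_\fg(\fk_\C\oplus\fp^+)=\fk_\C\oplus\fp^-$, the analogous statement for $\ker(J+i)$. Granting this, and using $\fg^{\frac12\lambda_j}_\C\cap\fk_\C=0$ (a nonzero $\fa$-weight space has no $\theta$-fixed vectors, as $\fa\subseteq\fp$), a short linear-algebra argument upgrades the inclusion to $\ker(J\mp i)=\fg^{\frac12\lambda_j}_\C\cap(\fk_\C\oplus\fp^\pm)$ and makes the displayed decomposition direct — which is all of (3). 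To prove $\ker(J-i)\subseteq\fk_\C\oplus\fp^+$, I would decompose $E=E^{(0)}+E^{(+)}+E^{(-)}$ along $\fg_\C=\fk_\C\oplus\fp^+\oplus\fp^-$, namely $E^{(0)}=i\sum_k h_k$, $E^{(+)}=-i\sum_k e_k$, $E^{(-)}=i\sum_k f_k$, write $Z=Z_0+Z_++Z_-\in\fg^{\frac12\lambda_j}_\C$ accordingly, and use $\theta|_{\fk_\C}=\id$, $\theta|_{\fp^\pm}=-\id$ together with $[\fk_\C,\fp^\pm]\subseteq\fp^\pm$, $[\fp^\pm,\fp^\pm]=0$ and $[\fp^+,\fp^-]\subseteq\fk_\C$ to read off the three $\fg_\C=\fk_\C\oplus\fp^+\oplus\fp^-$-components of $JZ=[E,\theta Z]$; imposing $JZ=+iZ$ should then force $Z_-=0$. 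This last piece of component bookkeeping — which interlocks the $\fp^\pm$-bracket relations with the rank-one relations inside each $\fg_\C^j$ — is the step I expect to be the main obstacle; it is precisely the computation carried out in \cite[Lemma 2.2.4]{VR76} and \cite[Chapter III, \S2 and \S3]{Sa80}, which I would either reproduce or simply cite.
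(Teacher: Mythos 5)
The paper offers no proof of this lemma at all --- it is quoted verbatim from \cite[Lemma 2.2.4]{VR76} and \cite[Chapter III, \S2--3]{Sa80} --- so any argument you supply is automatically ``a different route.'' Your route is the standard one and is structurally sound: the reduction of $J$ to $\ad(E_j)\circ\theta$ on each $\fg^{\frac12\lambda_j}$ via Moore's theorem (because $\lambda_k-\tfrac12\lambda_j$ is a restricted root only for $k=j$, and $\tfrac32\lambda_j$ never is), the identity $J^2=\ad([E_j,F_j])$ on $\fg^{\frac12\lambda_j}$ coming from $\ad(E_j)\fg^{\frac12\lambda_j}\subseteq\fg^{\frac32\lambda_j}=0$, and the formal part of (3): $\sigma_\fg$ commutes with $J$ and swaps $\fp^\pm$, so the two inclusions $\ker(J\mp i)\subseteq\fk_\C\oplus\fp^\pm$ are equivalent, and together with $\fg_\C^{\frac12\lambda_j}\cap\fk_\C=0$ they upgrade to the stated equalities. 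All of these reductions check out.

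Two points deserve attention. First, the constant in (1): if you actually perform the $2\times2$ computation you advertise, you get $[E_j,F_j]=\varphi_j(-4x_0)=-4x_j$, and since $x_j$ acts on $\fg^{\frac12\lambda_j}$ by $\tfrac12\lambda_j(x_j)=\tfrac12\gamma_j(h_j)=1$, this gives $J^2=-4\id$ rather than $-\id$. So your claim that the scalar equals $-1$ ``with the normalization of \eqref{eq:DefE}'' is not what the computation yields; the clean statement $J^2=-\id$ requires an extra factor $\tfrac12$ in $E$ (or in $J$). This looks like a normalization slip in \eqref{eq:DefE} itself rather than a flaw in your method, but since you present the constant as verified you should either record the factor $4$ or renormalize. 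Second, for (3) the only non-formal content is the inclusion $\ker(J-i)\subseteq\fk_\C\oplus\fp^+$ (equivalently, matching the sign of the $J$-eigenvalue with the $\fp^+$ versus $\fp^-$ side), and this is precisely the step you defer to the references. That is defensible --- the paper cites the same sources for the entire lemma --- but be aware that a self-contained proof would have to carry out exactly this component bookkeeping, as everything else in your write-up is reduction and linear algebra.
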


\section{The inclusion $G\subseteq P^+K_\C N_\C$}\label{sec:Inclusion}

In this section we show that there is exactly one choice of a maximal parabolic subgroup $P =LN \subseteq G$
such that $G\subseteq P^+ K_\C N_\C$. This embedding will be fundamental in our construction of Whittaker vectors in Section~\ref{sec:ExplicitKernels}. For tube type domains, this inclusion was obtained earlier in \cite{FOO24}. The proof uses $\SU (1,1)$-reduction by first proving the inclusion for $G=\SU (1,1)$ and then using the commuting homomorphisms $\varphi_j$, $j=1,\ldots ,r$, to prove it for
the general case. The proof further provides us with the explicit decomposition of elements in a maximally split torus which turns out to be important to prove the $L^2$-condition in Theorem~\ref{thm:EmbeddingsAreL2}.

\subsection{The case $G=\SU(1,1)$}

Let $G=\SU(1,1)\subseteq \SL (2,\C)$. We use the notation from the last section and set $x_0 = e_0+f_0$ and
$\fa = \R x_0$. Then we define
\begin{align*}
	A &= \exp \R x_0=  \{a_t:t\in\RR\}, & a_t &= \exp t x_0   =
	 \begin{pmatrix} \cosh t & \sinh t\\ \sinh t &\cosh t\end{pmatrix},\\
	N &=  \{n_x:x\in\RR\}, & n_x &= \exp x\begin{pmatrix} i & -i \\ i & -i\end{pmatrix} = 
	\begin{pmatrix} 1+ ix & -ix \\ ix & 1-ix\end{pmatrix}.
\end{align*}
The natural complexification of $N$ inside $G_\C=\SL(2,\C)$ is
$$ N_\CC = \{n_z:z\in\CC\}, \qquad n_z = \begin{pmatrix} 1+ iz & -iz \\ iz & 1-iz\end{pmatrix}. $$

\begin{theorem}[{see \cite[Lemma 4.1 and Theorem 4.2]{FOO24}}]\label{lem:NKPdecompositionSU11}
A matrix
$$ g=\begin{pmatrix}a&b\\c&d\end{pmatrix}\in\SL(2,\CC) $$
is contained in $P^+ K_\CC N_\CC$ if and only if $c+d\neq 0$. More precisely, writing $p_z^+ = \exp z e_0$ and $k_\theta = \exp \theta z_0$, we have $g=p_z^+ k_\theta n_w$ with
\[ e^{i\theta/2} = \frac{1}{c+d}, \qquad w = \frac{-ic}{c+d}\qquad\text{and}\qquad z=
\frac{(a+b)(c+d)-1}{(c+d)^2}. \]
In particular, $G\subseteq P^+ K_\C N_\C$ and $ P^+ K_\CC N_\CC$ is dense in $G_\CC$.
\end{theorem}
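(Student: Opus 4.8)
The plan is to carry out an honest matrix computation in $\SL(2,\CC)$ and then deduce the claimed consequences. First I would recall that, by the general formula \eqref{eq:KandP+ProjectionsSL2}, an element $g=\begin{pmatrix}a&b\\c&d\end{pmatrix}$ lies in $P^+K_\C P^-$ iff its lower-right entry is nonzero; after right-multiplication by $n_w$ the lower-right entry of $gn_w$ is $c(-iw)+d(1-iw)$, so I want to choose $w\in\CC$ so that this becomes nonzero in a controlled way and the product $gn_w$ actually lands in $P^+K_\C$ (equivalently, has zero lower-left entry, since then $gn_w=p^+_zk_\theta$). The lower-left entry of $gn_w$ is $c(1+iw)+d(iw)=(c+d)iw+c$, which vanishes precisely when $w=\dfrac{-ic}{c+d}$, and this requires $c+d\neq 0$. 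This is the key algebraic choice; everything else is bookkeeping.

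Next, having fixed $w=\frac{-ic}{c+d}$, I would compute the remaining three entries of $gn_w$ explicitly. The lower-right entry simplifies to $(c+d)iw+d-iw\cdot 0=\dots$; a short calculation gives it equal to $\dfrac{1}{c+d}$ (using $w=\frac{-ic}{c+d}$ and nothing else). Since $gn_w\in P^+K_\C$ has the form $\begin{pmatrix}*&*\\0&1/(c+d)\end{pmatrix}$ with determinant $1$, formula \eqref{eq:KandP+ProjectionsSL2} immediately yields $e^{i\theta/2}=k_\C(gn_w)^{-1}=c+d$, i.e. $e^{i\theta/2}=\frac{1}{c+d}$ after writing $k_\C(p^+_zk_\theta)=e^{-i\theta/2}$ with the stated normalization $k_\theta=\exp\theta z_0$ and $z_0=\diag(i/2,-i/2)$ (so $k_\theta=\diag(e^{i\theta/2},e^{-i\theta/2})$). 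For the translation parameter $z$, I use $p^+_z=\begin{pmatrix}1&z\\0&1\end{pmatrix}$, so $p^+(gn_w)=\dfrac{(gn_w)_{11}}{(gn_w)_{22}}=(c+d)(gn_w)_{11}$, and I must show $(gn_w)_{11}=\dfrac{a+b}{c+d}-\dfrac{?}{\,\cdot\,}$; carrying out the multiplication $(gn_w)_{11}=a(1+iw)+b(iw)=a+(a+b)iw=a-\dfrac{i(a+b)\cdot ic}{c+d}=a+\dfrac{(a+b)c}{c+d}=\dfrac{ac+ad+ac+bc-ac}{c+d}$ — I'll reorganize using $ad-bc=1$ to get $(gn_w)_{11}=\dfrac{a(c+d)+bc}{c+d}$, hence $z=(c+d)(gn_w)_{11}=a(c+d)+bc$. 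A final rewrite using $ad-bc=1$ turns $a(c+d)+bc=ac+ad+bc=ac+1+2bc$ into the symmetric form $\dfrac{(a+b)(c+d)-1}{1}$... wait — I should match the paper's normalization: since $p^+_z$ here means $\exp(ze_0)$ and the Harish-Chandra $\fp^+$-coordinate of $p^+_zk_\theta n_w$ must be divided appropriately, the correct statement is $z=\dfrac{(a+b)(c+d)-1}{(c+d)^2}$, which I obtain by dividing the entry computation by $(c+d)^2$ rather than $(c+d)$; I will be careful here to track whether $p^+(g)$ in \eqref{eq:KandP+ProjectionsSL2} is $b/d$ directly, which it is, so $z=p^+(gn_w)=\dfrac{(gn_w)_{12}}{(gn_w)_{22}}$ and I should recompute $(gn_w)_{12}=-iza\cdot\!\dots$ — concretely $(gn_w)_{12}=a(-iw)+b(1-iw)=b-(a+b)iw=b+\dfrac{(a+b)c}{c+d}=\dfrac{b(c+d)+(a+b)c}{c+d}=\dfrac{bc+bd+ac+bc}{c+d}=\dfrac{ac+bd+2bc}{c+d}$, and then $z=(c+d)(gn_w)_{12}=ac+bd+2bc$; using $bd=\dfrac{(1+bc)d}{a}$... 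I'll instead verify directly that $ac+bd+2bc$ equals $\dfrac{(a+b)(c+d)-1}{(c+d)^2}\cdot(c+d)^2=(a+b)(c+d)-1=ac+ad+bc+bd-1=ac+bd+2bc$ after substituting $ad-1=bc$. Good — so the three formulas all fall out of the same three matrix-entry computations.

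Finally, to conclude $G\subseteq P^+K_\C N_\C$: for $g=\begin{pmatrix}a&b\\\bar b&\bar a\end{pmatrix}\in\SU(1,1)$ one has $c+d=\bar b+\bar a=\overline{a+b}$, so $c+d=0$ would force $a=-b$, contradicting $|a|^2-|b|^2=1$; hence the criterion $c+d\neq 0$ holds on all of $G$. Density of $P^+K_\CC N_\CC$ in $G_\CC=\SL(2,\CC)$ is clear since its complement is the proper analytic subset $\{c+d=0\}$. I expect the only real obstacle to be keeping the normalizations straight — precisely which of $p^+(g)=b/d$ versus $\exp(ze_0)$ is meant, and the resulting powers of $(c+d)$ — so I would state the chosen conventions for $p^+_z$, $k_\theta$, $z_0$ explicitly at the outset and then let the determinant-one constraint $ad-bc=1$ do all the simplification. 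Since this is exactly \cite[Lemma 4.1 and Theorem 4.2]{FOO24}, an alternative is simply to cite that computation verbatim, but reproducing the three-line matrix multiplication makes the paper self-contained.
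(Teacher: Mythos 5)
The paper gives no independent proof of this statement---it is quoted verbatim from \cite[Lemma 4.1 and Theorem 4.2]{FOO24}---and your direct matrix computation is exactly the computation behind that citation, so the strategy (kill the lower-left entry by right multiplication with an element of $N_\CC$, then read off $k_\C$ and $p^+$ via \eqref{eq:KandP+ProjectionsSL2}) is the right one, and the three formulas you arrive at are the correct ones. However, as written your intermediate steps contain two sign errors that happen to cancel, and a literal execution of your plan would break down. First, if you arrange $gn_{w'}\in P^+K_\CC$, then the $N_\CC$-component of $g$ is $n_{w'}^{-1}=n_{-w'}$, not $n_{w'}$; you conflate the two. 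Second, the lower-left entry $c+(c+d)iw'$ of $gn_{w'}$ vanishes at $w'=+ic/(c+d)$, not at $-ic/(c+d)$ (where it equals $2c$); it is the inversion $w=-w'$ that produces the stated $w=-ic/(c+d)$. Third, the $(2,2)$-entry of $gn_{w'}$ is $d-(c+d)iw'=c+d$, not $1/(c+d)$ as you assert; since $(p^+_zk_\theta)_{22}=e^{-i\theta/2}$, this still gives $e^{i\theta/2}=1/(c+d)$, but your displayed chain ``$e^{i\theta/2}=k_\C(gn_w)^{-1}=c+d$, i.e.\ $e^{i\theta/2}=\frac{1}{c+d}$'' is self-contradictory. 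The subsequent entry computations for $z$ inherit the same ambiguity about which value of $w$ is in force and which entry sits in the denominator.

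All of this is repaired at once by running the computation in the direction that matches the statement: multiply out
$$ p^+_zk_\theta n_w=\begin{pmatrix}u&zu^{-1}\\0&u^{-1}\end{pmatrix}\begin{pmatrix}1+iw&-iw\\iw&1-iw\end{pmatrix},\qquad u=e^{i\theta/2}, $$
and match entries with $g$. The second row gives $c=u^{-1}iw$ and $d=u^{-1}(1-iw)$, hence $c+d=u^{-1}$ and $w=-ic/(c+d)$; the sum of the first row gives $a+b=u+zu^{-1}$, hence $z=\big((a+b)-u\big)u=\big((a+b)(c+d)-1\big)/(c+d)^2$. This also makes the ``only if'' direction explicit ($g=p^+_zk_\theta n_w$ forces $c+d=e^{-i\theta/2}\neq0$), which your plan leaves implicit; for the converse one notes that when $c+d=0$ the lower-left entry of $gn_{w}$ is constantly $c\neq0$ (else $\det g=0$), so no choice of $w$ works. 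Your arguments for $G\subseteq P^+K_\C N_\C$ (via $c+d=\overline{a+b}$ and $|a|^2-|b|^2=1$) and for density (the complement is the proper analytic set $\{c+d=0\}$) are correct.
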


\begin{corollary}
	For every $t\in\R$ we have $a_t=p^+_zk_{2it}n_w$ with
	$$ z = 1-e^{-2t} \qquad \mbox{and} \qquad w = \frac{1}{2i}(1-e^{-2t}). $$
\end{corollary}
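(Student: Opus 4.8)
The plan is to apply Theorem~\ref{lem:NKPdecompositionSU11} directly to the explicit matrix $a_t=\begin{pmatrix}\cosh t&\sinh t\\\sinh t&\cosh t\end{pmatrix}$, so that $a=d=\cosh t$ and $b=c=\sinh t$. The first step is to observe that $c+d=\sinh t+\cosh t=e^t\neq 0$, so the decomposition criterion of the theorem is satisfied (this also re-confirms $a_t\in G\subseteq P^+K_\C N_\C$). Then $a_t=p_z^+k_\theta n_w$ with the parameters read off from the theorem's formulas.

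Next I would compute the three parameters. Since $e^{i\theta/2}=\tfrac{1}{c+d}=e^{-t}$, we get $i\theta/2=-t$, i.e.\ $\theta=2it$, which identifies the middle factor as $k_{2it}$. For $w$, substitute $c=\sinh t=\tfrac12(e^t-e^{-t})$ and $c+d=e^t$ to obtain
$$ w=\frac{-ic}{c+d}=\frac{-i\tfrac12(e^t-e^{-t})}{e^t}=\frac{-i}{2}\bigl(1-e^{-2t}\bigr)=\frac{1}{2i}\bigl(1-e^{-2t}\bigr), $$
using $\tfrac1i=-i$. For $z$, note $a+b=\cosh t+\sinh t=e^t$, hence $(a+b)(c+d)=e^{2t}$ and
$$ z=\frac{(a+b)(c+d)-1}{(c+d)^2}=\frac{e^{2t}-1}{e^{2t}}=1-e^{-2t}. $$

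There is no real obstacle here: the statement is an immediate specialization of Theorem~\ref{lem:NKPdecompositionSU11}, and the only thing to be careful about is the bookkeeping of the hyperbolic identities $\cosh t\pm\sinh t=e^{\pm t}$ and the normalization of $\theta$ in $k_\theta=\exp\theta z_0$ so that $e^{i\theta/2}=e^{-t}$ yields exactly $\theta=2it$.
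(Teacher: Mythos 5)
Your proof is correct and matches the paper's intent exactly: the corollary is stated without proof as an immediate specialization of Theorem~\ref{lem:NKPdecompositionSU11} to $a_t$, and your computations of $\theta=2it$, $w=\tfrac{1}{2i}(1-e^{-2t})$ and $z=1-e^{-2t}$ are all accurate.
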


\subsection{The general case}\label{sec:P+KNgeneralCase}

We now prove Theorem \ref{lem:NKPdecompositionSU11} for the general case. We
use the notation from Sections \ref{subsSOR} and \ref{SuSSU(11)}.

For $j=1,\ldots,r$ let $y_j = x_1+\cdots + x_j$. Then $y_j$ determines a maximal parabolic subalgebra
\[\fp_j = \fl_j \oplus \fn_j = \fm_j \oplus \R y_j \oplus \fn_j\]
in $\fg$ given by
\[\fl_j = \fz_{\fg} (y_j)\quad\text{and}\quad \fn_j  = \bigoplus_{\alpha (y_j)>0}\fg^\alpha . \] 
In particular, it follows from Theorem~\ref{thm:Moore2} that $\fg^{\lambda_k}\subseteq\fn_j$ if and only if $k\leq j$. Denote by $P_j=L_jN_j$ the corresponding maximal parabolic subgroup of $G$.

\begin{theorem}\label{thm:pkn1} Let $j= 1,\ldots ,r$, then  
$G\subseteq P^+ K_\C N_{j\C}$ if and only if $j=r$.
\end{theorem}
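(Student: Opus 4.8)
The plan is to use $\SU(1,1)$-reduction via the commuting homomorphisms $\varphi_j$, exactly as advertised in the section introduction. The key structural observation is that the element $c=c_1\cdots c_r$ conjugates $\fc$ to $\fa$, and that each $\varphi_j$ intertwines the Cartan involution $\theta$ and the conjugation $\sigma$. So I would first reduce the question about the inclusion $G\subseteq P^+K_\C N_{j\C}$ to a question about the split torus $A=\exp\fa$, using the Cartan decomposition $G=K(\exp\fa_+)K$ together with the facts $K\subseteq K_\C$, $\exp\fp^+$-translates, and the fact (from Section~\ref{subHC}) that $G\subseteq P^+K_\C P^-$; the upshot is that $G\subseteq P^+K_\C N_{j\C}$ holds iff $a\in P^+K_\C N_{j\C}$ for all $a\in A$, because $K$ normalizes the factors appropriately and $P^+$ and $K_\C$ can be absorbed. (One has to be a little careful here: $N_{j\C}$ is not normalized by all of $K_\C$, only by $L_{j\C}$, so the reduction really uses that $A\subseteq L_{j\C}$ and that $K\cap$ the relevant parts behave well; this is the kind of bookkeeping I would spell out.)

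Next, since $\Ad(c)\fc=\fa$ and $c\in\varphi_1(\SU(1,1))\cdots\varphi_r(\SU(1,1))\subseteq G$, I would transport everything back to the maximally split torus $\exp\fc$ of the product of commuting $\sl(2,\C)$'s. An element $a=\exp\sum_j t_j(ih_j)$ (the generic element of $\exp\fc$ after conjugation) factors as $\prod_j\varphi_j(a_{t_j}')$ where $a'_{t_j}\in\SU(1,1)$ is the corresponding diagonal-type element. Now I apply the $\SU(1,1)$ result (the Corollary after Theorem~\ref{lem:NKPdecompositionSU11}, or rather its analog for the torus element in $\fc$): each $\varphi_j(\SU(1,1))$-component lies in $\varphi_j(P^+K_\C N_\C)$, hence in $P^+K_\C N_\C^{(j)}$ where $N_\C^{(j)}=\varphi_j(N_\C)$ is the one-parameter complex subgroup associated to the root $\gamma_j$ (equivalently, after conjugating by $c$, the root $\lambda_j$). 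Because the $\fg_\C^j$ commute, the product of these decompositions is again a decomposition in $P^+K_\C\big(\prod_j N_\C^{(j)}\big)$, and $\prod_{j=1}^r N_\C^{(j)}$ is precisely the ``$\lambda_1,\dots,\lambda_r$ part'' of $N_\C$.

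The crucial point — and the reason the answer is ``$j=r$ and only $j=r$'' — is then a comparison of root-space content. By Theorem~\ref{thm:Moore2}, $\fg^{\lambda_k}\subseteq\fn_j$ iff $k\le j$, and $\fn_j$ may in addition contain half-root spaces $\fg^{\frac12\lambda_k}$ and the ``mixed'' spaces $\fg^{\frac12(\lambda_k\pm\lambda_l)}$ according to the sign of $\alpha(y_j)$. For the inclusion to hold we need the $A$-parts (equivalently $\exp\fc$-parts) produced by the $\SU(1,1)$-reduction to be expressible inside $N_{j\C}$, and those parts involve exactly the full strongly orthogonal root directions $\lambda_1,\dots,\lambda_r$. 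If $j<r$, then $\fg^{\lambda_r}\not\subseteq\fn_j$, so the factor coming from $\varphi_r$ cannot be absorbed into $N_{j\C}$ — more precisely, projecting an element $a_t$ (with $t$ supported on the $r$-th coordinate) to $G_\C/(P^+K_\C N_{j\C})$ and showing it is nonzero gives the ``only if''. Conversely for $j=r$, $\fn_r$ contains all of $\fg^{\lambda_1}\oplus\cdots\oplus\fg^{\lambda_r}$ plus, in the non-tube case, the half-spaces $\fg^{\frac12\lambda_k}$; the $\SU(1,1)$-reduction only ever produces elements in $\prod_j N_\C^{(j)}\subseteq N_{r\C}$, so the inclusion holds. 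I expect the main obstacle to be the ``only if'' direction: one must rule out the inclusion cleanly, which requires either a dimension/Lie-algebra-theoretic obstruction (e.g. examining $\fn_j^{\mathrm{ab}}=\fn_j/[\fn_j,\fn_j]$ and observing that the image of $A$ under the would-be decomposition has a component outside $\fn_{j\C}$ modulo $\fp^+\oplus\fk_\C$) or an explicit computation in the smallest offending $\varphi_k(\SL(2,\C))$ with $k>j$ showing that $\varphi_k(a_t)\notin P^+K_\C N_{j\C}$ because $N_{j\C}$ does not contain $\varphi_k(N_\C)$. The bookkeeping around the two-step nilpotent structure of $N$ in the non-tube case (so that $N_{j\C}$ genuinely contains the relevant half-root subgroups and the decomposition closes up) is the other place where care is needed.
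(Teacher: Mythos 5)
Your proposal follows the paper's proof in all essentials: reduce the inclusion to the split torus $A$, decompose $\exp tx_k=\varphi_k(a_t)$ and apply the explicit $\SU(1,1)$ factorization so that the $N_\C$-factor lands in $\exp\fg_\C^{\lambda_k}$, and then observe that $\fg^{\lambda_k}\subseteq\fn_j$ for all $k$ if and only if $j=r$. The one step that does not go through as you state it is the reduction itself: starting from $G=KAK$ (or $K\exp(\fa^+)K$) leaves a rightmost factor of $K$ that cannot be absorbed, since $P^+K_\C N_{j\C}$ is only right-invariant under $(K\cap L_j)_\C N_{j\C}$, not under all of $K$. You flag this but leave it as ``bookkeeping''; the paper resolves it by a different decomposition, namely $G=KL_jN_j$ together with $L_j=(K\cap L_j)A(K\cap L_j)$, which gives $G=KA(K\cap L_j)N_j$ and hence a clean reduction to $A\subseteq P^+K_\C N_{j\C}$. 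With that substitution your argument matches the paper's, including the level of detail in the ``only if'' direction: both arguments rest on the claim that if $\fg^{\lambda_k}\not\subseteq\fn_j$ then the $\exp\fg_\C^{\lambda_k}$-factor produced by the $\SU(1,1)$-reduction cannot be reabsorbed into $P^+K_\C N_{j\C}$ (which ultimately comes from the injectivity of the $P^+\times K_\C\times P^-$ decomposition, since that factor has a nontrivial $\fp^-$-component); you are right to single this out as the point needing a precise justification, and the paper is equally terse there.
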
 

\begin{proof} Write $P=P_j$ and $L=L_j$, $N=N_j$ for short. The set $P^+ K_\C N_{\C}$ is left $K_\C$-invariant and right $(K\cap L)_\C
N_\C$-invariant. We have $G=KLN$ and $L= (K\cap L)A(K\cap L)$, so
\[ G = K A  (K\cap L) N .\]
It therefore suffice to show that $A \subseteq P^+K_\C N_\C $ if and only if $j=r$. As $\fa = \bigoplus_{k=1}^r \R x_k$, we have
$A\subseteq P^+K_\C N_\C$ if and only if $\exp t x_k\in P^+K_\C N_\C$ for all $t\in\R$ and $k=1,\ldots ,r$.   But $\SU (1,1)$-reduction shows that
\[\exp tx_k = \exp ((1-e^{-2t})e_k) \exp (-th_k) \exp \left(\frac{1}{2i}(1-e^{-2t})\varphi_k \begin{pmatrix} i & - i\\ i & - i \end{pmatrix}\right) .\]
By definition we have
\[\varphi_k \begin{pmatrix} i & - i\\ i & - i \end{pmatrix}\in \fg^{\lambda_k}.\]
Thus, $\exp tx_k \in N_\C$ for all $t\in\R$ if and only if $\fg^{\lambda_k}\subseteq \fn$. By the considerations above this is true for all $1\leq k\leq r$ if and only if $j=r$.
\end{proof}

From now on we write $N_\C$ for $N_{r\C}$.

\begin{remark}\label{rem:P-Decomposition}
	Note that by applying $\sigma_G$ to the decomposition $G\subseteq P^+K_\C N_\C$ and using that $\sigma_G (P^+) = P^-$,
	$\sigma_G(K_\C) = K_\C$ and $ \sigma_G(N_\C)=N_\C$,
	it also follows that $G\subseteq P^-K_\C N_\C$. 
\end{remark}

In contrast to the case where $\calD$ is of tube type, the multiplication map $P^+\times K_\C\times N_\C\to G_\C$ is not bijective. In fact, by Lemma~\ref{lem:ComplexStructure} the intersection $K_\C P^+\cap N_\C$ is non-trivial if and only if $\calD$ is of non-tube type, and in this case
$$ K_\C P^+\cap N_\C = \exp\Bigg(\bigoplus_{j=1}^r\fg_\C^{\frac{1}{2}\lambda_j,+}\Bigg). $$
Taking this intersection into account, we obtain:

\begin{corollary} The set $P^+K_\C N_\C$ is open in $G_\C$ and the multiplication map
$$ (P^+\times K_\C) \times_{K_\C P^+\cap N_\C} N_\C \to P^+ K_\C N_\C, \quad [p^+,k,n]\mapsto p^+kn $$
is biholomorphic.
\end{corollary}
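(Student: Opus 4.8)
The plan is to package together what we already know into a clean statement about the multiplication map. We have just established two facts: (i) $P^+K_\C N_\C$ contains $G$ and is, a fortiori, nonempty; (ii) the fiber of non-uniqueness is governed precisely by $K_\C P^+\cap N_\C=\exp(\bigoplus_j\fg_\C^{\frac12\lambda_j,+})$. So the first step is to verify openness: $P^+K_\C N_\C$ is the image of the holomorphic map $\mu\colon P^+\times K_\C\times N_\C\to G_\C$, $(p^+,k,n)\mapsto p^+kn$, and I would compute the differential of $\mu$ at the identity point $(e,e,e)$. Its image is $\fp^++\fk_\C+\fn_\C$, and a dimension count (or: writing $\fg_\C=\fp^-\oplus\fk_\C\oplus\fp^+$ and noting that $\fn_\C$ together with $\fp^+$ spans all of $\fp^+\oplus\fk_\C\oplus$(the part of $\fp^-$ inside $\fn_\C$), with the overlap exactly $\bigoplus_j\fg_\C^{\frac12\lambda_j,+}$) shows $\fp^++\fk_\C+\fn_\C=\fg_\C$ using Moore's Theorem (second version) and Lemma~\ref{lem:ComplexStructure}. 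Hence $\mu$ is a submersion at $(e,e,e)$; by translating (left by $P^+K_\C$, right by $N_\C$) it is a submersion everywhere on its domain, so $P^+K_\C N_\C$ is open.

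Next I would make sense of the quotient $(P^+\times K_\C)\times_{K_\C P^+\cap N_\C}N_\C$. Set $H:=K_\C P^+\cap N_\C$. Because $\fg_\C^{\frac12\lambda_j,+}\subseteq\fk_\C\oplus\fp^+$ and the $\fp^\pm$ are abelian and normalized by $K_\C$, the subgroup $H$ is a closed complex subgroup of $P^+K_\C$ which also sits inside $N_\C$; it acts on $P^+\times K_\C$ on the right via the inclusion $H\hookrightarrow K_\C P^+=P^+\rtimes K_\C$ (here one checks $P^+K_\C$ really is a semidirect product, which is immediate from $P^+\cap K_\C=\{e\}$ inside $G_\C$ and $K_\C$ normalizing $P^+$) and on $N_\C$ on the left by $h\cdot n=hn$. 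Then $\mu$ descends to a well-defined holomorphic map $\bar\mu$ on the balanced product $(P^+\times K_\C)\times_H N_\C$ because $\mu(p^+k\cdot h^{-1},h n)=p^+k n$ for $h\in H$.

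Then I would prove $\bar\mu$ is bijective. Surjectivity is just the definition of the target $P^+K_\C N_\C$. For injectivity: suppose $p_1^+k_1n_1=p_2^+k_2n_2$. Then $(p_2^+k_2)^{-1}(p_1^+k_1)=n_2n_1^{-1}\in P^+K_\C\cap N_\C=H$, call it $h$; so $p_1^+k_1=(p_2^+k_2)h$ and $n_1=h^{-1}n_2$, which is exactly the relation identifying $[p_1^+,k_1,n_1]$ with $[p_2^+,k_2,n_2]$. Finally, $\bar\mu$ is a holomorphic bijection between complex manifolds of the same dimension which is a local biholomorphism — this follows because $\mu$ is a submersion with fibers exactly the $H$-orbits (of dimension $\dim H$), so the induced map on the quotient has everywhere-bijective differential — hence $\bar\mu$ is biholomorphic.

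The main obstacle, I expect, is the bookkeeping for openness: one must correctly identify $\fp^++\fk_\C+\fn_\C$ as all of $\fg_\C$, i.e.\ check that $\fn_\C$ supplies, modulo $\fp^+\oplus\fk_\C$, the entire antiholomorphic part $\fp^-$. Using Moore's Theorem the relevant negative root spaces of $\fc_\C$ (the $-\gamma_j$'s, the $-\frac12(\gamma_i+\gamma_j)$'s, and in the non-tube case the antiholomorphic halves $\fg_\C^{\frac12\lambda_j,-}$) all lie in $\fn_\C$, so indeed $\fp^-\subseteq\fn_\C+\fk_\C$ after using the $J$-decomposition of Lemma~\ref{lem:ComplexStructure}; the only subtlety is that the ``$+$'' halves $\fg_\C^{\frac12\lambda_j,+}$ are counted once in $\fp^+\oplus\fk_\C$ and once in $\fn_\C$, which is precisely why the fiber of $\mu$ is $H=\exp(\bigoplus_j\fg_\C^{\frac12\lambda_j,+})$ rather than trivial. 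Everything else is formal manifold theory once this Lie-algebra identity is in place.
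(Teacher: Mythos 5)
Your argument is correct and is essentially the argument the paper leaves implicit: the corollary is stated there without proof, as a direct consequence of Theorem~\ref{thm:pkn1} and the identification of $K_\C P^+\cap N_\C$, and your route (surjectivity of the differential of the multiplication map, hence openness; injectivity of the induced map on the balanced product over $H=K_\C P^+\cap N_\C$; an injective holomorphic submersion is a biholomorphism onto its open image) is the natural way to fill that in. One parenthetical in your last paragraph is literally false, though: the root spaces $\fg_\C^{-\gamma_j}$ and $\fg_\C^{-\frac12(\gamma_i+\gamma_j)}$ do \emph{not} lie in $\fn_\C$, and $\fp^-\not\subseteq\fn_\C+\fk_\C$ --- already for $\SU(1,1)$ one has $\fn_\C=\C(h_0-e_0+f_0)$, which contains no nonzero multiple of $f_0$. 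The $\fa$-root spaces spanning $\fn_\C$ are Cayley-tilted against the $z_0$-grading and have components in all three of $\fp^+,\fk_\C,\fp^-$. What is true, and what your dimension count actually requires, is the version you state first: $\fn_\C$ covers $\fp^-$ \emph{modulo} $\fk_\C\oplus\fp^+$, with $\fn_\C\cap(\fk_\C\oplus\fp^+)=\bigoplus_j\fg_\C^{\frac12\lambda_j,+}$ by Lemma~\ref{lem:ComplexStructure}; so replace the false containments by that statement and the proof goes through.
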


\begin{remark}\label{rem:DisambiguityDecomposition}
	For $g\in P^+ K_\C N_\C$ write
	$$ g = p^+_N (g) k^+_{\C ,N}(g) n^+_\C (g) \in P^+ K_\C N_\C $$
	for some choice of decomposition. Whenever using this notation, the expression under consideration will be independent of the chosen representatives.\\
	Further note that if we decompose
	$$ g = p^-_N(g) k^-_{\C, N}(g) n^-_\C(g) \in P^- K_\C N_\C, $$
	then applying $\sigma$ yields
	\begin{equation}
		p^+_N(g) = \sigma(p^-_N(g)), \qquad k^+_{\C,N}(g) = \sigma(k^-_{\C,N}(g)), \qquad n^+_\C(g) = \sigma(n^-_\C(g)).\label{eq:Decomposition+vs-}
	\end{equation}
\end{remark}

From the explicit calculation in the proof of Theorem \ref{thm:pkn1} we further obtain:

\begin{corollary}\label{thm:pkn2}
For $j=1,\ldots ,r$ and $t\in\R$ we have
\begin{align}\label{eq:pkn}
 k_{\C,N}(\exp tx_j) &= \exp(-th_j),\\
 n_\CC(\exp tx_j) &= \exp\left(\frac{1}{2i}(1-e^{-2t})\varphi_j  \begin{pmatrix} i & - i\\ i & - i \end{pmatrix}\right).
\end{align}
\end{corollary}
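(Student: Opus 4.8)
The plan is to read off both formulas directly from the explicit $\SU(1,1)$-reduction that appeared in the proof of Theorem~\ref{thm:pkn1}. There we showed
\[
\exp tx_j = \exp\big((1-e^{-2t})e_j\big)\,\exp(-th_j)\,\exp\!\Big(\tfrac{1}{2i}(1-e^{-2t})\,\varphi_j\!\begin{pmatrix}i&-i\\i&-i\end{pmatrix}\Big),
\]
and we already know from Theorem~\ref{thm:pkn1} (with $j=r$) that each of the three factors lies in $P^+$, $K_\C$, and $N_\C$ respectively: the first factor is $\exp$ of an element of $\fp^+=\fg_\C^{\gamma_j}\oplus\cdots$ hence in $P^+$, the middle factor is $\exp$ of $-th_j\in\ft_\C\subseteq\fk_\C$ hence in $K_\C$, and the last factor is $\exp$ of an element of $\fg^{\lambda_j}\subseteq\fn$ (valid precisely because $j\le r$) hence in $N_\C$. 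So the displayed line \emph{is} a decomposition of $\exp tx_j$ of the form $p^+k n$ with $p^+\in P^+$, $k\in K_\C$, $n\in N_\C$.

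The only remaining point is the disambiguity issue flagged in Remark~\ref{rem:DisambiguityDecomposition}: the decomposition $g=p^+_N(g)k^+_{\C,N}(g)n^+_\C(g)$ is not unique when $\calD$ is of non-tube type, so a priori the $K_\C$- and $N_\C$-components are only well-defined up to the overlap $K_\C P^+\cap N_\C=\exp\big(\bigoplus_{j=1}^r\fg_\C^{\frac12\lambda_j,+}\big)$. However, the statement of Corollary~\ref{thm:pkn2} is exactly of the type that Remark~\ref{rem:DisambiguityDecomposition} promises to be well-posed: any ambiguity changes $k^+_{\C,N}$ on the right by an element of $K_\C P^+\cap N_\C$ and simultaneously changes $n^+_\C$ on the left by the inverse of that same element, so the values produced above are one legitimate choice of representatives, which is all the notation requires. (If one wants to be fully explicit, one can note that $-th_j\in\ft_\C$ while the ambiguity group lies in $\bigoplus_k\fg_\C^{\frac12\lambda_k,+}$, a root-space direction with $\frac12\lambda_k\ne 0$, so in fact the value $\exp(-th_j)$ is canonical; but this refinement is not needed.)

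There is essentially no obstacle here: this is a corollary in the literal sense, extracting two lines that were already written down inside the proof of Theorem~\ref{thm:pkn1} and relabeling the factors as $k_{\C,N}(\exp tx_j)$ and $n_\C(\exp tx_j)$. The mildest point to be careful about — and the only thing I would spell out — is making sure the reader sees that each factor in the displayed $\SU(1,1)$-reduction genuinely lands in the claimed subgroup $P^+$, $K_\C$, $N_\C$, which is immediate from $\fp^+\ni e_j$, $\ft_\C\ni h_j$, and $\fg^{\lambda_j}\subseteq\fn$ for $j\le r$, the last being the content of Theorem~\ref{thm:pkn1}. So the write-up is one short paragraph pointing back to the proof of Theorem~\ref{thm:pkn1} and to Remark~\ref{rem:DisambiguityDecomposition}.
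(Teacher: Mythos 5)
Your proposal is correct and follows exactly the paper's (implicit) argument: the corollary is read off from the displayed $\SU(1,1)$-reduction in the proof of Theorem~\ref{thm:pkn1}, whose three factors visibly lie in $P^+$, $K_\C$ and $N_\C$ since $e_j\in\fp^+$, $h_j\in\ft_\C\subseteq\fk_\C$ and $\varphi_j\left(\begin{smallmatrix}i&-i\\i&-i\end{smallmatrix}\right)\in\fg^{\lambda_j}\subseteq\fn$. Your extra remark on well-definedness modulo $K_\C P^+\cap N_\C$ is a sensible addition but does not change the route.
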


\section{The holomorphic discrete series}\label{sec:HoloDS}

In this section we recall the realization of holomorphic discrete series representations on spaces of holomorphic functions on the bounded domain $\calD$. We refer to \cite{N00, O00, VR76} for more details.

We start by defining the universal {\it cocycle and kernel}, see \cite[pp 64]{Sa80}. For
that we let $g\in G_\C $ and $z,w \in \fp^+$. Then, whenever defined, which in particular is true for $g\in G$ and $z,w\in\calD$, we set, using the notation from Section \ref{subHC}:
\begin{itemize}
\item[\rm (J)] (\emph{universal cocycle}) $J(g,z) :=   k_\C (g\exp z)  $,
\item[\rm (K)] (\emph{universal kernel}) $K(z,w) = k_\C (\exp (- \sigma_\fg (w))\exp (z))$.
\end{itemize}
The following relations are easy to verify:

\begin{lemma}\label{lem:IdentitiesCocycleKernel}
	For $k\in K_\C$ define $k^* = \sigma_G (k)^{-1}$. Then the following holds true for $a,b\in G_\C$, $z,w\in \fp^+$ and $k\in K_\C$:
	\begin{enumerate}
		\item\label{lem:IdentitiesCocycleKernel1} $J(k,z) = k$,
		\item\label{lem:IdentitiesCocycleKernel2} $J(ab,z) = J (a,b\cdot z) J(b,z)$,
		\item\label{lem:IdentitiesCocycleKernel3} $K(z,o)=K(o,w)=e$,
		\item\label{lem:IdentitiesCocycleKernel4} $K(z,w) = K(w,z)^*$.
	\end{enumerate}
	Moreover, if $a\in G$ we further have
	\begin{enumerate}
		\setcounter{enumi}{4}
		\item\label{lem:IdentitiesCocycleKernel5} $J(a,w)^* K(a\cdot z,a\cdot w) J (a,z) = K(z,w)$,
		\item\label{lem:IdentitiesCocycleKernel6} $K(a\cdot o,a\cdot o)  = J(a,o)^{-*}J(a,o)^{-1}$.
	\end{enumerate}
\end{lemma}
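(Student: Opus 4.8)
The plan is to verify the six identities directly from the definitions $J(g,z)=k_\C(g\exp z)$ and $K(z,w)=k_\C(\exp(-\sigma_\fg(w))\exp z)$, using the uniqueness of the $P^+K_\C P^-$-decomposition and the behavior of $\sigma_G$ and $\theta_G$ on the three factors. First I would record the two basic structural facts: (a) $\sigma_G(P^\pm)=P^\pm$, $\sigma_G(K_\C)=K_\C$ (since $\sigma_\fg$ preserves $\fp^\pm$ and $\fk_\C$), so for $g=p^+k_\C p^-$ one has $\sigma_G(g)=\sigma_G(p^+)\sigma_G(k_\C)\sigma_G(p^-)$, giving $k_\C(\sigma_G(g))=\sigma_G(k_\C(g))$; and (b) for $p^-\in P^-$, $k\in K_\C$ the product $p^-k$ lies in $K_\C P^-=P^-K_\C$ with trivial $P^+$-part, and similarly $K_\C$ normalizes $P^\pm$, so $k_\C(p^+_1 k\, g)=k\, k_\C(g)$ whenever $g\in P^+K_\C P^-$ and we absorb $p^+_1\in P^+$ into the $P^+$-factor — these normalization properties are exactly what make $J$ and $K$ well-defined and are stated in Section~\ref{subHC}.

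With these in hand: \eqref{lem:IdentitiesCocycleKernel1} is immediate since $k\exp z=k\exp(z)=(\exp\Ad(k)^{-1}z)\,k$ wait — more simply, $k\exp z=(\exp(\Ad(k)z'))k$ puts $k$ as the $K_\C$-part, so $J(k,z)=k$; here one uses that $K_\C$ normalizes $P^+$. For \eqref{lem:IdentitiesCocycleKernel2} write $ab\exp z=a\,(b\exp z)=a\exp(b\cdot z)\,J(b,z)\,p^-(b\exp z)$ using the definition of the $G$-action on $\fp^+$ as $g\exp Z\in \exp(g\cdot Z)K_\C P^-$, then push the $J(b,z)\in K_\C$ and the $P^-$-factor to the right past nothing and read off the $K_\C$-part of $a\exp(b\cdot z)$ as $J(a,b\cdot z)$; the product of $K_\C$-parts is $J(a,b\cdot z)J(b,z)$. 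Identity \eqref{lem:IdentitiesCocycleKernel3} follows since $\exp(-\sigma_\fg(o))=e$ and $\exp(o)=e$ (the base point $o$ corresponds to $0\in\fp^+$), so the argument of $k_\C$ is $e$. For \eqref{lem:IdentitiesCocycleKernel4} apply $\sigma_G$ to $\exp(-\sigma_\fg(w))\exp z$: since $\sigma_G(\exp Z)=\exp(\sigma_\fg Z)$, this sends the argument to $\exp(-w)\exp(\sigma_\fg z)=(\exp(\sigma_\fg z)\exp(-w))$-type expression — more precisely one gets the group element defining $K(w,z)$ but inverted on the two sides, and combined with $k^*:=\sigma_G(k)^{-1}$ and the identity $k_\C(g^{-1})$-versus-$k_\C(g)$ one extracts $K(z,w)=K(w,z)^*$; the cleanest route is to observe $\sigma_G\big(\exp(-\sigma_\fg w)\exp z\big)^{-1}=\exp(-\sigma_\fg z)\exp w$ and take $k_\C$ of both sides.

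Identities \eqref{lem:IdentitiesCocycleKernel5} and \eqref{lem:IdentitiesCocycleKernel6} are the substantive ones and use crucially that $a\in G$, i.e. $\sigma_G(a)=a$. For \eqref{lem:IdentitiesCocycleKernel5}, start from $\exp(-\sigma_\fg(a\cdot w))\,a\,\exp z$; write $a\exp z=\exp(a\cdot z)J(a,z)p^-(a\exp z)$ and, using $a\in G$ so that $\sigma_\fg(a\cdot w)=a\cdot w$ in the appropriate sense together with $\sigma_G(a)=a$, rewrite $\exp(-\sigma_\fg(a\cdot w))a=\sigma_G(a\exp(-w))^{-1}\cdot(\text{stuff})$ — concretely, $\exp(-\overline{a\cdot w})\,a=\sigma_G\big(a\exp(-\sigma_\fg w)\big)\cdot(\cdots)$; the point is to reorganize the product so the $K_\C$-part factors as $J(a,w)^{-*}K(z,w)J(a,z)^{-1}$, which after inverting gives the claim. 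Then \eqref{lem:IdentitiesCocycleKernel6} is the special case $z=w=o$ of \eqref{lem:IdentitiesCocycleKernel5}: the left side becomes $J(a,o)^*K(a\cdot o,a\cdot o)J(a,o)=K(o,o)=e$ by \eqref{lem:IdentitiesCocycleKernel3}, so $K(a\cdot o,a\cdot o)=J(a,o)^{-*}J(a,o)^{-1}$. The main obstacle is bookkeeping in \eqref{lem:IdentitiesCocycleKernel5}: one must carefully track where $\sigma_G$ acts and use that it fixes $a\in G$ while intertwining the $P^+$ and $P^-$ factors, and one must use the fact that $a\cdot w\in\calD\subseteq\fp^+$ so that $\sigma_\fg(a\cdot w)$ is genuinely the $\fp^-$-translate entering the definition of $K$; all other steps are formal consequences of uniqueness of the Harish-Chandra decomposition and $K_\C$ normalizing $P^\pm$.
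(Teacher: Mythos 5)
Your direct verification from the Harish--Chandra decomposition is exactly the argument the paper has in mind — it gives no proof at all, merely asserting the identities are easy to verify with a pointer to Satake — and all six items check out along the lines you describe (in particular your clean route to (4) via $\sigma_G\bigl(\exp(-\sigma_\fg w)\exp z\bigr)^{-1}=\exp(-\sigma_\fg z)\exp w$ is correct). One slip to fix in (5): $\sigma_\fg(a\cdot w)\neq a\cdot w$ (it lies in $\fp^-$, not $\fp^+$); what you actually need there is only $\sigma_G(a)=a$ together with $\sigma_G(\exp w)=\exp(\sigma_\fg w)$, after which applying $\sigma_G$ to $a\exp w\in\exp(a\cdot w)\,K_\C P^-$ and inverting gives $\exp(-\sigma_\fg(a\cdot w))\in J(a,w)^{-*}\,P^+\exp(-\sigma_\fg w)\,a^{-1}$, and the rest of your bookkeeping goes through to yield $K(a\cdot z,a\cdot w)=J(a,w)^{-*}K(z,w)J(a,z)^{-1}$.
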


Now fix a finite-dimensional holomorphic representation $(\pi,\calV_\pi)$ of $K_\C$. We endow $\calV_\pi$ with an inner product that makes $\pi$ unitary, so that
\begin{equation}
	\pi(k)^* = \pi(k^*) \qquad (k\in K_\C).\label{eq:PiAdjoint}
\end{equation}
Let
\begin{equation}
	j_\pi(g,z) = \pi(J(g,z))^{-1} \qquad \mbox{and} \qquad K_\pi(z,w) = \pi(K(z,w))^{-1},\label{eq:DefJpiAndKpi}
\end{equation}
whenever $g\in G$ and $z,w\in\calD$. Then by Lemma~\ref{lem:IdentitiesCocycleKernel}~\eqref{lem:IdentitiesCocycleKernel2}, the formula
\[U_\pi (g) F(z) = j_\pi (g^{-1},z) F(g^{-1}z) \qquad (g\in G,F\in\calO(\calD,\calV_\pi),z\in\calD)\] 
defines a representation of $G$ on $\cO (\calD,\calV_\pi)$, the space of $\calV_\pi$-valued holomorphic functions on $\calD$. By Lemma~\ref{lem:IdentitiesCocycleKernel}~\eqref{lem:IdentitiesCocycleKernel5} it preserves the inner product
$$ \langle F_1,F_2\rangle_\pi = \int_{\calD}\langle K_\pi(z,z)^{-1}F_1(z),F_2(z)\rangle\,d^*z, $$
where $d^*z$ denotes a $G$-invariant measure on $\calD$. This defines a unitary representation $(U_\pi,\calH_\pi)$, but $\calH_\pi$ could be zero. A necessary and sufficient condition for $\calH_\pi\neq\{0\}$ can be given in terms of the highest weight $\mu_\pi\in\ft_\C^*$ of $\pi$:

\begin{theorem}[Harish-Chandra]
	Let $\rho = \frac{1}{2}\sum_{\alpha\in \Delta^+(\fg_\C,\ft_\C)} \alpha$. Then $\calH_\pi\neq\{0\}$ if and only if
	\begin{equation}\label{eq:HCcond}
		\ip{\mu + \rho}{\alpha} < 0,\quad \text{for all } \alpha \in\Dnp.
	\end{equation}
	In this case, $(U_\pi,\calH_\pi)$ is irreducible and belongs to the discrete series.
\end{theorem}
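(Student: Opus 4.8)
The plan is to prove the two assertions separately: first the characterization of when $\calH_\pi \neq \{0\}$ via the condition \eqref{eq:HCcond}, and then, granting nonvanishing, the irreducibility and membership in the discrete series. For the first part, I would follow Harish-Chandra's original argument as presented in \cite{N00,VR76}. The space $\calH_\pi$ is a reproducing kernel Hilbert space with kernel built from $K_\pi(z,w)$, and the crux is to decide whether the sesquilinear form $\langle F_1, F_2\rangle_\pi$, a priori only formally defined, is positive-definite on a nonzero space. By $K_\C$-equivariance, it suffices to evaluate the norm of the constant function with value a highest weight vector $v_{\mu}\in\calV_\pi$; the reproducing kernel at the base point $o$ equals $K_\pi(o,o)^{-1}=\id$, so the key quantity is the integral $\int_{\calD}\langle K_\pi(z,z)^{-1}v_\mu, v_\mu\rangle\, d^*z$, which after the Harish-Chandra change of variables and the Hua-type computation of $K(z,z)$ in the strongly orthogonal root coordinates $\gamma_1,\dots,\gamma_r$ becomes a Beta-integral in $r$ variables. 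Convergence of this integral is exactly equivalent to \eqref{eq:HCcond}: the integrand decays like a product of powers $(1-|z_j|^2)^{c_j}$ with exponents linear in $\langle \mu+\rho,\gamma_j\rangle$, so the integral converges iff each exponent is large enough, which one rewrites via the string of strongly orthogonal roots into the stated inequality for all $\alpha\in\Dnp$. Conversely, if \eqref{eq:HCcond} fails, the formal norm of this vector diverges (or the form is not positive), so $\calH_\pi=\{0\}$.

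**Irreducibility and discrete series.** Assuming \eqref{eq:HCcond}, I would argue as follows. The representation $(U_\pi,\calH_\pi)$ is realized on holomorphic functions, so evaluation at $o$ is a continuous functional and, by reproducing kernel theory, $\calH_\pi$ contains the constant functions with values in $\calV_\pi$; these form a $K$-invariant subspace isomorphic to $\calV_\pi$ on which $K$ acts by $\pi$ (up to the cocycle normalization). One shows $\calV_\pi$ is cyclic: since $\fp^-$ annihilates the constants (differentiation of holomorphic functions in the $\fp^+$-directions, translated appropriately) while $\fp^+$ applied repeatedly to the constants spans a dense subspace — this is the standard fact that a holomorphic discrete series realization is generated by its lowest $K$-type as a $(\fg,K)$-module. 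Any closed invariant subspace $\calH'\subseteq\calH_\pi$ is $K$-invariant, hence its intersection with $\calV_\pi$ is $\pi$-invariant; since $\pi$ is irreducible, either $\calH'\cap\calV_\pi=0$, forcing $\calH'=0$ by taking orthogonal complements and using cyclicity, or $\calH'\supseteq\calV_\pi$, forcing $\calH'=\calH_\pi$. Square-integrability is immediate from the construction — the inner product is literally an $L^2$-inner product against the $G$-invariant measure $d^*z$ — and matrix coefficients are in $L^2(G)$ because the $K$-finite ones are, by the same Beta-integral estimate used for convergence, decaying fast enough along $A$; hence $(U_\pi,\calH_\pi)$ is a discrete series representation.

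**Main obstacle.** The genuinely delicate step is the explicit evaluation and convergence analysis of $\int_{\calD}\langle K_\pi(z,z)^{-1}v_\mu,v_\mu\rangle\,d^*z$: one must compute $K(z,z)$ (equivalently the Bergman-type kernel) in the polydisc coordinates furnished by the $\varphi_j$-embeddings of Section~\ref{SuSSU(11)}, diagonalize the $K_\C$-action on the highest weight line, and identify the resulting exponents with $\langle\mu+\rho,\gamma_j\rangle$ using the root data in Moore's Theorem. This is where the distinction between tube and non-tube type could in principle enter, though for this particular statement the final criterion is uniform; for the present paper one can simply cite \cite{HC56} or the textbook treatments in \cite{N00,VR76} rather than reproduce the computation. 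The cyclicity of the lowest $K$-type, while standard, also requires a small argument that iterated $\fp^+$-derivatives of $\calV_\pi$-valued constants exhaust a dense subspace, which follows from Taylor expansion at $o$ together with the fact that $U(\fp^+)\cdot\calV_\pi$ surjects onto all Taylor coefficients.
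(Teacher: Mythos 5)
The paper offers no proof of this statement: it is quoted as a classical theorem of Harish-Chandra, with the references \cite{N00,O00,VR76} cited at the top of Section~\ref{sec:HoloDS} standing in for the argument. Your outline follows the standard route from those sources and is essentially correct: nonvanishing of $\calH_\pi$ reduces to finiteness of $\int_\calD\langle K_\pi(z,z)^{-1}v_\mu,v_\mu\rangle\,d^*z$ for a highest weight vector $v_\mu$, the polydisc reduction along $\gamma_1,\dots,\gamma_r$ turns this into a product of one-variable Beta-type integrals whose convergence is exactly \eqref{eq:HCcond}, and the same integral controls square-integrability of the minimal $K$-type matrix coefficients via Lemma~\ref{lem:IdentitiesCocycleKernel}~\eqref{lem:IdentitiesCocycleKernel6} and the $G=KAK$ integration formula.

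Two steps should be made explicit if you were to write this out in full. First, the equivalence of ``$\calH_\pi\neq\{0\}$'' with ``the constants have finite norm'' is not automatic in the direction you need: one must argue that if $\calH_\pi\neq\{0\}$ then its reproducing kernel is forced by $G$-equivariance and the transformation law of Lemma~\ref{lem:IdentitiesCocycleKernel}~\eqref{lem:IdentitiesCocycleKernel5} to be a positive multiple of $K_\pi(z,w)$, so that the functions $K_\pi(\cdot,o)\xi=\xi$ actually lie in $\calH_\pi$; only then does divergence of the integral force $\calH_\pi=\{0\}$. Second, your irreducibility dichotomy ``$\calH'\cap\calV_\pi=0$ or $\calH'\supseteq\calV_\pi$'' for a closed invariant subspace $\calH'$ silently uses that the $K$-type $\pi$ occurs in $\calH_\pi$ with multiplicity one; otherwise the $\pi$-isotypic component could split between $\calH'$ and $\calH'^\perp$ with neither containing the constants. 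This multiplicity-one statement does hold, by comparing $\ad(z_0)$-eigenvalues: $\pi$ cannot recur in $S^k(\fp^+)\otimes\calV_\pi$ for $k>0$ since weights of $S^k(\fp^+)$ take the value $ki$ on $z_0$ while differences of weights of $\calV_\pi$ vanish there. With these two insertions, and with the convergence computation either carried out or cited, your sketch is a complete and standard proof.
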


The representations $(U_\pi,\calH_\pi)$ are called \emph{holomorphic discrete series representations}. Moreover, the Hilbert space $\calH_\pi$ is a reproducing kernel Hilbert space with reproducing kernel $K_\pi(z,w)$ (after suitable normalization of the measure $d^*z$).

Note that the condition \eqref{eq:HCcond} only depends on the restriction of $\mu_\pi$ to $\fc_\C$. Writing $\mu_\pi|_{\fc_\C}=-\frac{1}{2}\sum_{j=1}^r\mu_{\pi,j}\gamma_j$ and $\rho|_{\fc_\C}=\frac{1}{2}\sum_{j=1}^r\rho_j$, we find that \eqref{eq:HCcond} is equivalent to
\begin{equation}
	\mu_{\pi,j}\geq\rho_j \qquad \mbox{for all }1\leq j\leq r.\label{eq:DSconditionMuJ}
\end{equation}
In this notation, the action of the torus $\exp\fc_\C$ on a highest weight vector $\xi\in\calV_\pi$ is given by
\begin{equation}
	\exp\left(\sum_{j=1}^rt_jh_j\right)\xi = \left(\prod_{j=1}^r e^{-\mu_{\pi,j}t_j}\right)\xi.\label{eq:ActionHighestWeightVector}
\end{equation}

\begin{example}[The case of $G=\SU(1,1)$]
We identify $K_\C$ with $\C^*$ by $k_\theta\mapsto e^\theta$ and $\fp^+$ with $\C$ by $ze_0\mapsto e_0$. For
$$g=\begin{pmatrix} a & b \\ c & d\end{pmatrix} \in \SL (2,\C)$$ 
and $z, w\in \C$, $\gamma\in \C^*$ we have
\[ g\exp z e_0 = \begin{pmatrix} a & b \\ c & d\end{pmatrix} \begin{pmatrix} 1 & z \\ 0 & 1\end{pmatrix}
= \begin{pmatrix} a & az + b\\ c & cz + d\end{pmatrix}.\]
Thus $J(g,z) = (cz + d)^{-1}$ by \eqref{eq:KandP+ProjectionsSL2}.
Moreover, for $z,w\in\C$ we find 
\[\exp (-\bar w f_0)\exp (z e_0) =\begin{pmatrix} 1 & 0 \\ -\bar w & 1\end{pmatrix}\begin{pmatrix} 1 & z \\ 0 & 1\end{pmatrix}
\begin{pmatrix} 1 & z \\ -\bar w & 1- z\bar w\end{pmatrix},\]
so, again by \eqref{eq:KandP+ProjectionsSL2}, we have $K(z,w) = (1-z\bar w)^{-1}$.

The finite-dimensional irreducible representation of $\widetilde K\simeq \R$ are the characters $t\mapsto e^{i\lambda t}$.
We then have
\[j_\lambda (g,z) = (cz +d)^{-\lambda}\quad \text{and}\quad K_\lambda (z,w) = (1 - z\bar w)^{-\lambda} .\]
Note that 
\[ j_\lambda (a_t,z)= (\sinh (t)z + \cosh (t))^ {-\lambda}, \quad j_\lambda (a_t,0)= \cosh (t)^{-\lambda}.\]
The condition \eqref{eq:HCcond} is equivalent to $\lambda>1$.
\end{example}

\section{Fock space representations}\label{sec:FockSpaceReps}

We recall the construction of the irreducible unitary representations of the nilpotent group $N$. Recall from Section~\ref{sec:P+KNgeneralCase} that $\fn=\fn_{\frac{1}{2}}\oplus\fn_1$, where
$$ \fn_{\frac{1}{2}} = \bigoplus_{j=1}^r \fg^{\frac{1}{2}\lambda_j}, \qquad \mbox{and} \qquad \fn_1 = \bigoplus_{1\leq j\leq k\leq r}\fg^{\frac{1}{2}(\lambda_j+\lambda_k)} $$
with $[\fn_{\frac{1}{2}},\fn_{\frac{1}{2}}]=\fn_1$ and $[\fn_{\frac{1}{2}}\oplus \fn_1,\fn]=\{0\}$. Hence, $\fn$ is abelian if $\calD$ is of tube-type and two-step nilpotent otherwise. In particular, $\fn_1$ is the center of $\fn$. It follows that for every non-trivial central character $\psi:N\to S^1$ there exists a unique irreducible unitary representation $\omega_\psi$ of $N$ such that $\omega_\psi(n)=\psi(n)\cdot\id$ for all $n\in\exp(\fn_1)$. If $\fn$ is abelian, then $\omega_\psi=\psi$ is simply a unitary character of $N$, and if $\fn$ is two-step nilpotent the representation $\omega_\psi$ is infinite-dimensional.

We realize $\omega_\psi$ explicitly on a Fock space $\calF$ of holomorphic functions on $\fn_{\frac{1}{2}}$, with the convention that $\calF=\C$ if $\fn_{\frac{1}{2}}=\{0\}$. More precisely, let $J$ be the complex structure on $\fn_{\frac{1}{2}}$ given by Lemma~\ref{lem:ComplexStructure}. The Killing form $\kappa$ on $\fg$ gives rise to an inner product on the real vector space $\fn$:
$$ (z|w) = \kappa(z,\theta w) \qquad (z,w\in\fn). $$
Using the complex structure $J$, we define a Hermitian form on $\fn_{\frac{1}{2}}$:
$$ \langle z,w\rangle = \kappa(z,\theta w)-i\kappa(Jz,\theta w) \qquad (z,w\in\fn_{\frac{1}{2}}). $$
It is easy to see that this form is indeed Hermitian with respect to the complex structure induced by $J$. The Fock space $\calF$ is defined by
$$ \calF = \left\{\zeta\in\calO(\fn_{\frac{1}{2}}):\int_{\fn_{\frac{1}{2}}}|\zeta(w)|^2e^{-|w|^2}\,dw<\infty\right\} $$
with the obvious norm, where $dw$ denotes Lebesgue measure on $\fn_{\frac{1}{2}}$ normalized such that $e^{-|w|^2}\,dw$ is a probability measure. $\calF$ is a reproducing kernel Hilbert space with reproducing kernel given by
\begin{equation}
	K_\calF(z,w) = e^{\langle z,w\rangle} \qquad (z,w\in\fn_{\frac{1}{2}}).\label{eq:RepKernelFock}
\end{equation}

The group $N$ acts on $\calF$ by
\begin{align}
	\omega(\exp(z))\zeta(w) &= e^{\langle w,z\rangle-\frac{1}{2}|z|^2}\zeta(w-z) && (z\in\fn_{\frac{1}{2}}),\label{eq:FockActionN1/2}\\
	\omega(\exp(x))\zeta(w) &= e^{2i(x|E)}\zeta(w) && (x\in\fn_1),\label{eq:FockActionN1}
\end{align}
where $\zeta\in\calF$ and $w\in\fn_{\frac{1}{2}}$ and $E$ as in \eqref{eq:DefE}. Here, we normalize the Killing form such that $(E|E)=r$. Note that $K\cap L$ leaves $E$ invariant.

On the dense subspace $\calF_0$ spanned by functions of the form
$$ \zeta(z) = p(z)e^{\langle z,w\rangle} \qquad (z\in\fn_{\frac{1}{2}}), $$
where $p$ is a polynomial and $w\in\fn_{\frac{1}{2}}$, this representation extends to the complexification $N_\C$ of $N$. Recall that
$$ \fn_{\frac{1}{2},\C}=\fn_{\frac{1}{2},\C}^+\oplus\fn_{\frac{1}{2},\C}^- \qquad \mbox{with }\fn_{\frac{1}{2},\C}^\pm=\{\tfrac{1}{2}(z\mp iJz):z\in\fn_{\frac{1}{2}}\} $$
and put
\begin{equation}
	n_z^\pm=\exp\left(\frac{1}{2}(z\mp iJz)\right)\in\exp(\fn_{\frac{1}{2},\C}^\pm) \qquad (z\in\fn_{\frac{1}{2}}).\label{def:nzpm}
\end{equation}
It follows that for $\zeta\in\calF_0$ and $z,w\in\fn_{\frac{1}{2}}$:
\begin{align}
	\omega(n_z^+)\zeta(w) &= \zeta(w-z),\label{eq:OscillatorActionOnNC+}\\
	\omega(n_z^-)\zeta(w) &= e^{\langle w,z\rangle}\zeta(w).\label{eq:OscillatorActionOnNC-}
\end{align}
In particular,
\begin{equation}
	\omega(n_z^+)^*=\omega(n_z^-)^{-1} \qquad (z\in\fn_{\frac{1}{2}}).\label{eq:AdjointFockSpaceRep}
\end{equation}

The compact group $K\cap L$ acts on $\fn_{\frac{1}{2}}$, $\fn_{\frac{1}{2},\C}$ and $\fn_{\frac{1}{2},\C}^\pm$ by the adjoint representation. We have
$$ \omega(n_{\Ad(k)z}^\pm) = \tau(k)\circ\omega(n_z^\pm)\circ\tau(k)^{-1} \qquad (k\in K\cap L,z\in\fn_{\frac{1}{2}}), $$
where $\tau$ is the unitary representation of $K\cap L$ on $\calF$ given by
$$ \tau(k)\zeta(z) = \zeta(\Ad(k)^{-1}z) \qquad (k\in K\cap L,\zeta\in\calF,z\in\fn_{\frac{1}{2}}). $$

\section{Whittaker models for holomorphic discrete series}\label{sec:WhittakerModels}

We fix the irreducible unitary representation $(\omega,\calF)$ of $N$ constructed in the previous section. For a holomorphic discrete series representation $(U_\pi,\calH_\pi)$ we study three types of objects:
\begin{itemize}
	\item (Whittaker vectors) $W\in\Hom_N(\calH_\pi^\infty,\calF)$
	\item (Intertwining operators) $T\in\Hom_G(\calH_\pi^\infty,C^\infty(G/N,\omega))$
	\item (Kernels) $\Psi:G\times\calD\to\Hom(\calF,\calV_\pi)$
\end{itemize}
Using explicit kernels, we embed holomorphic discrete series representations $(U_\pi,\calH_\pi)$ into $L^2(G/N,\omega)$, the space of $L^2$-sections of the homogeneous vector bundle $G\times_N\calF$ over $G/N$ induced from $(\omega,\calF)$.

\subsection{Whittaker vectors vs. intertwining operators}\label{sec:WhittakerVsIntertwiner}

Let $W\in\Hom_N(\calH_\pi^\infty,\calF)$ be a Whittaker functional. Then we obtain a continuous $G$-equivariant embedding
\begin{gather*}
	T:\calH_\pi^\infty\to C^\infty(G/N,\omega)=\{f\in C^\infty(G,\calF):f(gn)=\omega(n)^{-1}f(g)\mbox{ for }g\in G,n\in N\},\\
	T(F)(x) = W (U_\pi(x)^{-1}F ) \qquad (F\in\calH_\pi^\infty,x\in G).
\end{gather*}
On the other hand, every continuous $G$-equivariant linear operator $T:\calH_\pi^\infty\to C^\infty(G/N,\omega)$ gives rise to a Whittaker functional $W\in\Hom_N(\calH_\pi^\infty,\calF)$ by
\begin{equation}
	W(F) = T(F)(e) \qquad (F\in\calH_\pi^\infty).\label{eq:WhittakerFromIntertwiner}
\end{equation}
This gives the Frobenius reciprocity isomorphism
$$ \Hom_N(\calH_\pi^\infty,\calF) \simeq \Hom_G(\calH_\pi^\infty,C^\infty(G/N,\omega)), \quad W\mapsto T. $$

\subsection{From intertwining operators to kernels}\label{sec:IntertwinerToKernel}

To every $T\in\Hom_G(\calH_\pi^\infty,C^\infty(G/N,\omega))$ we can now associate a kernel function $\Psi:G\times\calD\to\Hom(\calF,\calV_\pi)$ in the following way. For every $z\in\calD$ and $\xi\in\calV_\pi$, the reproducing kernel $K_z\xi$ is contained in $\calH_\pi^\infty$, and we can put
$$ \Psi^*(x,z)\xi = T(K_{z,\xi})(x) = T(U_\pi(x)^*K_{z,\xi})(e) \qquad (x\in G,z\in\calD,\xi\in\calV_\pi). $$
This defines a map $\Psi^*:G\times\calD\to\Hom(\calV_\pi,\calF)$. Since $K_{z,\xi}$ is antiholomorphic in $z\in\calD$, the function $\Psi^*(x,z)$ is continuous in $x\in G$ and antiholomorphic in $z\in\calD$. Taking the adjoint of $\Psi^*(x,z)$, we obtain
$$ \Psi:G\times\calD\to\Hom(\calF,\calV_\pi), \quad \Psi(x,z) := \Psi^*(x,z)^*. $$

\begin{lemma}\label{lem:KernelEquivariance}
	The function $\Psi(x,z)$ is continuous in $x\in G$ and holomorphic in $z\in\calD$. Moreover, it has the following invariance:
	\begin{enumerate}
		\item $\Psi(xn,z)=\Psi(x,z)\circ\omega(n)$ for $n\in N$.
		\item $\Psi(gx,z)=j_\pi(g^{-1},z)\circ\Psi(x,g^{-1}z)$ for all $g,x\in G$ and $z\in\calD$.
	\end{enumerate}
\end{lemma}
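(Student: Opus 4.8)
The plan is to verify the three claimed properties directly from the definition of $\Psi$ as the adjoint of $\Psi^*$, where $\Psi^*(x,z)\xi = T(U_\pi(x)^*K_{z,\xi})(e)$. The analytic regularity statement (continuity in $x$, holomorphy in $z$) I would obtain by dualizing the already-noted fact that $\Psi^*(x,z)$ is continuous in $x$ and antiholomorphic in $z$: passing to adjoints in a finite-dimensional target space $\Hom(\calV_\pi,\calF)\cong\overline{\Hom(\calF,\calV_\pi)}$ turns antiholomorphic into holomorphic while preserving continuity, since the map $A\mapsto A^*$ is conjugate-linear and continuous. (One should note $\calF$ is infinite-dimensional when $\calD$ is non-tube type, but $K_{z,\xi}\in\calH_\pi^\infty$ and $T$ is continuous into $C^\infty(G/N,\omega)$, so the values land in a fixed finite-dimensional space $\calV_\pi$ and taking adjoints of bounded operators is harmless; the regularity in $x$ and $z$ is inherited from that of $x\mapsto U_\pi(x)^*K_{z,\xi}$ into $\calH_\pi^\infty$.)

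For property (1), I would compute $\Psi^*(xn,z)\xi = T(U_\pi(xn)^*K_{z,\xi})(e) = T(U_\pi(n)^*U_\pi(x)^*K_{z,\xi})(e)$. Since $T$ intertwines $U_\pi$ with the right regular action on $C^\infty(G/N,\omega)$ and evaluation at $e$ of the shifted section $\big(g\mapsto (T\phi)(ng)\big)$... more precisely, using $T(U_\pi(n)^*\phi)(x) = (T\phi)(xn)$ for $\phi\in\calH_\pi^\infty$, one gets $\Psi^*(xn,z)\xi = (T(U_\pi(x)^*K_{z,\xi}))(en) = (T(U_\pi(x)^*K_{z,\xi}))(n) = \omega(n)^{-1}(T(U_\pi(x)^*K_{z,\xi}))(e) = \omega(n)^{-1}\Psi^*(x,z)\xi$, where the third equality is exactly the defining transformation law of sections in $C^\infty(G/N,\omega)$. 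Taking adjoints and using $\big(\omega(n)^{-1}A\big)^* = A^*\omega(n)^{-*} = A^*\omega(n)$ since $\omega$ is unitary, I get $\Psi(xn,z) = \Psi(x,z)\circ\omega(n)$.

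For property (2), the key input is the reproducing-kernel transformation law. From Lemma~\ref{lem:IdentitiesCocycleKernel} and the $G$-invariance of $\langle\cdot,\cdot\rangle_\pi$, the reproducing kernels satisfy $U_\pi(g)K_{z,\xi} = K_{g\cdot z,\,j_\pi(g,z)^{-*}\xi}$ (the precise cocycle factor to be pinned down from \eqref{eq:DefJpiAndKpi} and Lemma~\ref{lem:IdentitiesCocycleKernel}\eqref{lem:IdentitiesCocycleKernel5}). Then $\Psi^*(gx,z)\xi = T(U_\pi(gx)^*K_{z,\xi})(e) = T(U_\pi(x)^*U_\pi(g)^*K_{z,\xi})(e) = T(U_\pi(x)^*K_{g^{-1}\cdot z,\,j_\pi(g^{-1},z)^*\xi})(e) = \Psi^*(x,g^{-1}z)\,j_\pi(g^{-1},z)^*\xi$, again using the cocycle relation to move $g^{-1}$ inside. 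Dualizing gives $\Psi(gx,z) = j_\pi(g^{-1},z)\circ\Psi(x,g^{-1}z)$ after taking adjoints (note $(AB)^* = B^*A^*$ and the factor $j_\pi(g^{-1},z)^*$ becomes $j_\pi(g^{-1},z)$ acting on the left of $\Psi$). The main obstacle is bookkeeping: getting the adjoints, inverses, and the precise cocycle factor $j_\pi(g,z)^{-*}$ versus $j_\pi(g,z)^*$ consistent, which requires carefully tracking how $U_\pi(g)^*$ acts on reproducing kernels; once that identity is isolated as a lemma the rest is formal.
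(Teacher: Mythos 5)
Your proof is correct and is essentially the paper's argument: the paper simply defers to \cite[Lemma 4.4]{FOO24}, which is this same direct verification from the definition of $\Psi^*(x,z)\xi=T(U_\pi(x)^*K_{z,\xi})(e)$ using the $G$-equivariance of $T$, the transformation law of sections in $C^\infty(G/N,\omega)$, the action of $U_\pi$ on reproducing kernels, and dualization for the regularity statement. One bookkeeping remark: the correct kernel transformation law is $U_\pi(g)K_{z,\xi}=K_{g\cdot z,\,j_\pi(g,z)^{*}\xi}$ (not $j_\pi(g,z)^{-*}$ as in your tentative parenthetical), which is in fact the version you use in the subsequent computation, so your final identities come out right.
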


\begin{proof}
	The proof is the same as in \cite[Lemma 4.4]{FOO24}.
\end{proof}

By the previous lemma, the kernel $\Psi(x,z)$ is uniquely determined by its values at $(x,o)$ for $x\in G$. We therefore let
$$ \widetilde{\Psi}:G\to\Hom(\calF,\calV_\pi), \quad \widetilde\Psi(x)=\Psi(x,o), $$
noting that
\begin{equation}
	\Psi(x,g\cdot o) = j_\pi(g,o)^{-1}\circ\widetilde\Psi(g^{-1}x) \qquad (x,g\in G).\label{eq:PsiVsTildePsi}
\end{equation}
Using the equivariance properties from before, we can express the values of $\widetilde\Psi$ in terms of the decomposition $G\subseteq P^-K_\C N_\C$ (see Remark~\ref{rem:P-Decomposition}).

\begin{lemma}\label{lem:FormulaForPsiTilde}
	Let $g\in G$ and write $g=p^-kn\in P^-K_\C N_\C$. Then the following identity holds in $\Hom(\calF_0,\calV_\pi)$ with $\calF_0$ as in Section~\ref{sec:FockSpaceReps}:
	$$ \widetilde\Psi(g) = \pi(k)\circ\widetilde\Psi(e)\circ\omega(n). $$
\end{lemma}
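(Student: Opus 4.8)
The plan is to reduce the identity to the definition of $\widetilde\Psi(g)=\Psi(g,o)$ and then exploit the two equivariance properties of $\Psi$ recorded in Lemma~\ref{lem:KernelEquivariance}, combined with the formula \eqref{eq:PsiVsTildePsi} relating $\Psi$ and $\widetilde\Psi$. The essential subtlety is that the decomposition $g=p^-kn$ lives in the \emph{complexified} group $P^-K_\C N_\C$, whereas $\Psi$ is a priori only defined (and equivariant) for elements of the \emph{real} group $G$. So the argument will have to pass through the complexification, and the natural object on which to do this is the dense subspace $\calF_0$, where the Fock representation $\omega$ extends holomorphically to $N_\C$ and where (via the reproducing kernel) the holomorphic discrete series action likewise extends; this is exactly why the statement is phrased on $\Hom(\calF_0,\calV_\pi)$.

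First I would recall from Section~\ref{sec:IntertwinerToKernel} that $\Psi^*(x,z)\xi=T(K_{z,\xi})(x)$, so that $\widetilde\Psi(x)^*\xi=\Psi^*(x,o)\xi=T(K_{o,\xi})(x)=T(U_\pi(x)^{-1}K_{o,\xi})(e)=W(U_\pi(x)^{-1}K_{o,\xi})$, and note that $K_{o,\xi}$ spans the lowest $K$-type $\calV_\pi\subseteq\calH_\pi$ on which $K_\C$ acts by $\pi$. The point is that the assignment $x\mapsto\widetilde\Psi(x)^*$ is, in disguise, the restriction to $G$ of the (holomorphically extended) matrix-valued function $g\mapsto W\circ U_\pi(g)^{-1}|_{\calV_\pi}$; both sides of the asserted identity, when one applies $*$, should be seen as this function evaluated at $g=p^-kn$ and then factored. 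Concretely, write $\widetilde\Psi(p^-kn)$ using the $G_\C$-equivariance: for the $P^-$-factor one uses that $P^-$ stabilises the base point $o\in\calD$ (since $\calD\subseteq\fp^+$ and $P^-\subseteq K_\C P^-$ fixes $eK_\C P^-$), together with the fact that $J(p^-,o)=J(e,o)=e$, so $p^-$ contributes trivially; for the $K_\C$-factor one uses $J(k,z)=k^{-1}$-type behaviour, i.e. Lemma~\ref{lem:IdentitiesCocycleKernel}\eqref{lem:IdentitiesCocycleKernel1}, giving the prefactor $\pi(k)=j_\pi(k^{-1},o)^{-1}$-type term; and for the $N_\C$-factor one uses the $N$-equivariance $\Psi(xn,z)=\Psi(x,z)\circ\omega(n)$ extended holomorphically to $n\in N_\C$ acting on $\calF_0$ by \eqref{eq:OscillatorActionOnNC+}--\eqref{eq:OscillatorActionOnNC-}.

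More precisely, I would argue as follows. Both sides of $\widetilde\Psi(g)=\pi(k)\circ\widetilde\Psi(e)\circ\omega(n)$ are real-analytic in $g\in G$ (the left side by Lemma~\ref{lem:KernelEquivariance}, the right side because $g\mapsto(p^-,k,n)$ can be chosen real-analytically and $\omega(n)$, $\pi(k)$ depend holomorphically on the complex parameters); hence it suffices to verify the identity on a dense subset, or even to verify that both sides extend to the same holomorphic function of $g$ ranging over the open set $P^-K_\C N_\C\subseteq G_\C$. On that open set, the left side $\widetilde\Psi(g)$ is \emph{defined} to be the holomorphic extension given by the right side formula applied with $p^-$ replaced by $e$ — no, rather: one first checks the three elementary equivariances (under left multiplication by $P^-$, by $K_\C$, and right multiplication by $N_\C$) for the holomorphically-extended kernel directly from the cocycle identities in Lemma~\ref{lem:IdentitiesCocycleKernel}, the identity $J(k,z)=k$, and the relations \eqref{eq:OscillatorActionOnNC+}--\eqref{eq:OscillatorActionOnNC-}, and then one simply composes these three equivariances:
\[
\widetilde\Psi(p^-kn)=\Psi(p^-kn,o)=\Psi(kn,o)\quad(\text{left }P^-\text{-invariance, as }p^-\cdot o=o,\ J(p^-,o)=e)
\]
\[
=\pi(k)\circ\Psi(n,o)=\pi(k)\circ\Psi(e,o)\circ\omega(n)=\pi(k)\circ\widetilde\Psi(e)\circ\omega(n),
\]
using the $K_\C$-equivariance (which reads $\Psi(kx,z)=j_\pi(k,z)^{-1}\Psi(x,k^{-1}z)=\pi(k)\Psi(x,k^{-1}z)$, and $k^{-1}\cdot o=o$) and then the holomorphically-extended $N_\C$-equivariance $\Psi(xn,z)=\Psi(x,z)\circ\omega(n)$ on $\calF_0$. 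The main obstacle is the bookkeeping around the extension to the complexification: one must be sure that the $P^-$- and $K_\C$-equivariances of $\Psi$, which in Lemma~\ref{lem:KernelEquivariance} are stated for $g\in G$, genuinely extend holomorphically to $P^-$ and $K_\C$ inside $G_\C$, and that the $N$-equivariance extends to $N_\C$ acting on $\calF_0$; this is where one invokes that $K_{z,\xi}\in\calH_\pi^\infty$ depends antiholomorphically on $z$, that $U_\pi$ extends to a holomorphic action of $G_\C$ on the subspace of finite $K$-types (or at least that the relevant matrix coefficients extend), and that $\omega$ extends to $N_\C$ on $\calF_0$ exactly as spelled out in Section~\ref{sec:FockSpaceReps}. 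Once the holomorphic extensions and the three one-parameter equivariances are in hand, the composition above is immediate and the lemma follows; I would expect this to parallel \cite[Lemma 4.4]{FOO24} and its consequences closely, with the only genuinely new input being the presence of the nonabelian piece $\fn_{\frac12}$, handled precisely by working over $\calF_0$.
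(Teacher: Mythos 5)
Your argument is correct and is essentially the argument the paper intends: the paper's own proof is just a citation to \cite[Remark 4.5~(2)]{FOO24}, and your reconstruction --- evaluate $\widetilde\Psi(g)=\Psi(g,o)$, peel off the $P^-$-factor using $p^-\cdot o=o$ and $J(p^-,o)=e$, the $K_\C$-factor using $J(k,z)=k$, and the $N_\C$-factor using the extension of $\omega$ to $N_\C$ on $\calF_0$ --- is exactly that argument, including the correct identification of the one real technical point (holomorphic continuation of the equivariances of Lemma~\ref{lem:KernelEquivariance} from $G$ to the complex factors $P^-$, $K_\C$, $N_\C$). No substantive difference from the paper's route.
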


\begin{proof}
	The same as in \cite[Remark 4.5~(2)]{FOO24}.
\end{proof}

It follows that $\widetilde\Psi$ is uniquely determined by $\widetilde{\Psi}(e)=\Psi(e,o)\in\Hom(\calF,\calV_\pi)$. The latter space is infinite-dimensional in the case where $\calD$ is of non-tube type. However, by Lemma~\ref{lem:FormulaForPsiTilde} there is some restriction on $\widetilde{\Psi}(e)$: for $n\in N_\C\cap(K_\C P^-)$ we have
$$ \widetilde\Psi(e)\circ\omega(n) = \widetilde\Psi(n) = \pi(n)\circ\widetilde\Psi(e), $$
where we view $(\pi,\calV_\pi)$ as a representation of $K_\C P^-$ by extending it trivially to $P^-$. It follows that $A=\widetilde\Psi(e)$ belongs to $\Hom_{N_\C\cap(K_\C P^-)}(\calF_0,\calV_\pi)$.

\begin{lemma}\label{lem:IntertwinerFockSpaceToLKT}
	Every $A\in\Hom_{N_\C\cap(K_\C P^-)}(\calF_0,\calV_\pi)$ is of the form
	$$ A\zeta = \int_{\fn_{\frac{1}{2}}}\zeta(w)\pi(n_w^-)\eta \cdot e^{-|w|^2}\,dw \qquad (\zeta\in\calF) $$
	for some $\eta\in\calV_\pi$.
\end{lemma}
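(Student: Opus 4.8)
The plan is to set up a linear bijection between $\calV_\pi$ and $\Hom_{N_\C\cap(K_\C P^-)}(\calF_0,\calV_\pi)$: in one direction $\eta\in\calV_\pi$ is sent to the operator $A_\eta$ given by the displayed integral, and in the other an intertwiner $A$ is sent to $A(\mathbf{1})\in\calV_\pi$, where $\mathbf{1}\in\calF_0$ denotes the constant function; the content of the lemma is that $A=A_{A(\mathbf{1})}$. I would begin with two preliminaries. First, applying $\sigma_G$ to the identity $K_\C P^+\cap N_\C=\exp\bigl(\bigoplus_j\fg_\C^{\frac{1}{2}\lambda_j,+}\bigr)$ and using that $\sigma_\fg$ commutes with $J=\ad(E)\circ\theta$ (as $E\in\fg$) and hence interchanges $\fg_\C^{\frac{1}{2}\lambda_j,\pm}$ (Lemma~\ref{lem:ComplexStructure}), one obtains $N_\C\cap(K_\C P^-)=\exp\bigl(\fn_{\frac{1}{2},\C}^-\bigr)=\{n_w^-:w\in\fn_{\frac{1}{2}}\}$; on this subgroup $\omega$ acts by \eqref{eq:OscillatorActionOnNC-}, so in particular $\omega(n_w^-)\mathbf{1}=e^{\langle\cdot,w\rangle}=K_\calF(\cdot,w)$. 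Second, $w\mapsto\tfrac{1}{2}(w+iJw)$ is conjugate-linear (immediate from $J^2=-\id$), so $w\mapsto\pi(n_w^-)\eta$ is an antiholomorphic $\calV_\pi$-valued function of $w$, and — as follows from the structure theory of $\fn$ — $\fn_{\frac{1}{2},\C}^-$ is an abelian subalgebra (the $-i$-eigenspace of $J$ is isotropic for the $J$-invariant bracket $\fn_{\frac{1}{2}}\times\fn_{\frac{1}{2}}\to\fn_1$), whence $\pi(n_{v+w}^-)=\pi(n_v^-)\pi(n_w^-)$.

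The main step is the reproducing-kernel description of operators out of $\calF$. Assuming that $A$ extends to a bounded operator $\calF\to\calV_\pi$ — which is the case in the situation where the lemma is applied, since there $A=\widetilde\Psi(e)=\Psi(e,o)$ is the adjoint of a continuous map — the Riesz isomorphism together with the reproducing kernel $K_\calF(z,w)=e^{\langle z,w\rangle}$ shows that $A$ has the form $A\zeta=\int_{\fn_{\frac{1}{2}}}\zeta(w)\,\Theta(w)\,e^{-|w|^2}\,dw$ for a unique antiholomorphic $\calV_\pi$-valued function $\Theta$, and the mean-value and reproducing properties give $A(\mathbf{1})=\Theta(0)$ and $A(e^{\langle\cdot,w\rangle})=\Theta(w)$ for all $w\in\fn_{\frac{1}{2}}$. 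If now $A$ is $N_\C\cap(K_\C P^-)$-equivariant, then applying equivariance to $\omega(n_w^-)\mathbf{1}=e^{\langle\cdot,w\rangle}$ forces $\Theta(w)=A(e^{\langle\cdot,w\rangle})=\pi(n_w^-)A(\mathbf{1})=\pi(n_w^-)\eta$ with $\eta:=A(\mathbf{1})$, which is exactly the claimed formula. Conversely, $A_\eta$ is the operator with antiholomorphic kernel $w\mapsto\pi(n_w^-)\eta$; its defining integral converges for every $\zeta\in\calF$ because $|\zeta(w)|\le\|\zeta\|\,e^{|w|^2/2}$ by the Fock reproducing kernel while $\|\pi(n_w^-)\|\le e^{C|w|}$ ($\pi$ being finite dimensional), so the integrand is $O\bigl(e^{-|w|^2/2+C|w|}\bigr)$; and $A_\eta$ is equivariant by the same reproducing-kernel identities run backwards, using the relation $\Theta(v+w)=\pi(n_{v+w}^-)\eta=\pi(n_v^-)\Theta(w)$ coming from abelianness of $\fn_{\frac{1}{2},\C}^-$.

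The one subtlety — the part I expect to be the main obstacle — is the regularity of $A$: an element of $\Hom_{N_\C\cap(K_\C P^-)}(\calF_0,\calV_\pi)$ is a priori only a linear, group-equivariant map defined on the algebraic subspace $\calF_0$, whereas the reproducing-kernel argument needs continuity (equivalently, a bounded extension to $\calF$). In the application this causes no trouble, as noted above, so I would either build continuity into the statement or check it at the point of use. A continuity-free variant is also available: $\calF_0$ is generated from $\mathbf{1}$ by the group action of $N_\C\cap(K_\C P^-)$ together with the derived action of $\fn_{\frac{1}{2},\C}^-$ — the latter acting on $\calF_0$ by multiplication with the linear functions $z\mapsto\langle z,u\rangle$, which span all linear forms by non-degeneracy of $\langle\cdot,\cdot\rangle$ — so that any map compatible with the group action and this infinitesimal action is determined by its value on $\mathbf{1}$; since $A_\eta$ realises such a map with $A_\eta(\mathbf{1})=\eta$, one again concludes $A=A_{A(\mathbf{1})}$. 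I would present the reproducing-kernel computation as the main argument and the algebraic generation statement as a remark handling the general case.
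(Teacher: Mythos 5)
Your argument is correct and is essentially the paper's own proof run on the primal side rather than the adjoint side: the paper computes $A^*\xi(z)=[\omega(n_z^+)^{-1}A^*\xi](0)=\langle\xi,\pi(n_z^-)\circ A\mathbf{1}\rangle$ using the reproducing property at $0$ and the identification $N_\C\cap K_\C P^-=\{n_z^-:z\in\fn_{\frac{1}{2}}\}$, which is exactly your identity $A(e^{\langle\cdot,w\rangle})=A(\omega(n_w^-)\mathbf{1})=\pi(n_w^-)A\mathbf{1}$ in adjoint form. The continuity caveat you raise is genuine but equally implicit in the paper's proof (which needs $A^*\xi$ to be an element of $\calF$ on which $\omega(n_z^+)^{-1}$ acts), and, as you observe, it is harmless at the one point where the lemma is applied, since there $A=\widetilde\Psi(e)=\Psi(e,o)$ is the adjoint of a map out of the finite-dimensional space $\calV_\pi$ and hence automatically bounded.
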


\begin{proof}
Taking adjoints, we have
$$ \omega(n)^*\circ A^*=A^*\circ \pi(n)^* \qquad (n\in N_\C\cap(K_\C P^-)). $$
Since $N_\C\cap K_\C P^-=\{n_z^-:z\in\fn_{\frac{1}{2}}\}$ (see Lemma~\ref{lem:ComplexStructure} and \eqref{def:nzpm}), using \eqref{eq:RepKernelFock} and \eqref{eq:OscillatorActionOnNC+} we find
\begin{align*}
	A^*\xi(z) &= [\omega(n_z^+)^{-1}\circ A^*\xi](0) = [\omega(n_z^-)^*\circ A^*\xi](0)\\
	&= [A^*\circ\pi(n_z^-)^*\xi](0) = \langle A^*\circ\pi(n_z^-)^*\xi,\mathbf{1}\rangle = \langle\xi,\pi(n_z^-)\circ A\mathbf{1}\rangle.
\end{align*}
Writing $\eta=A\mathbf{1}$ it follows that
\begin{equation*}
	\langle A\zeta,\xi\rangle = \langle\zeta,A^*\xi\rangle = \int_{\fn_{\frac{1}{2}}}\zeta(w)\langle\pi(n_z^-)\eta,\xi\rangle e^{-|w|^2}\,dw
\end{equation*}
and the claim follows.
\end{proof}

\begin{corollary}\label{cor:UpperBoundWhittaker}
	$\widetilde{\Psi}(e)\in\Hom(\calF_0,\calV_\pi)$ is given by
	$$ \widetilde{\Psi}(e)\zeta = \int_{\fn_{\frac{1}{2}}}\zeta(w)\pi(n_w^-)\eta \cdot e^{-|w|^2}\,dw \qquad (\zeta\in\calF) $$
	for some $\eta\in\calV_\pi$. In particular, the space $\Hom_N(\calH_\pi^\infty,\calF)$ of Whittaker vectors on $\calH_\pi$ is of dimension at most $\dim\calV_\pi$.
\end{corollary}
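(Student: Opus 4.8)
The plan is to read off the first assertion from what has already been proved and then to derive the multiplicity bound by following the chain of assignments $W\rightsquigarrow T\rightsquigarrow\Psi\rightsquigarrow\widetilde\Psi\rightsquigarrow\widetilde\Psi(e)\rightsquigarrow\eta$, each step of which is linear and injective. For the first part, the discussion preceding the corollary already shows that $A:=\widetilde\Psi(e)$ lies in $\Hom_{N_\C\cap(K_\C P^-)}(\calF_0,\calV_\pi)$, so Lemma~\ref{lem:IntertwinerFockSpaceToLKT} applies verbatim and produces the asserted integral formula, with $\eta=A\mathbf{1}=\widetilde\Psi(e)\mathbf{1}$. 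Note that $\eta$ is then uniquely determined by $\widetilde\Psi(e)$, being its value on the constant function $\mathbf{1}\in\calF$, so $\widetilde\Psi(e)\mapsto\eta$ is a linear injection into $\calV_\pi$.

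For the multiplicity bound I would exhibit an injective linear map $\Hom_N(\calH_\pi^\infty,\calF)\to\calV_\pi$. Given a Whittaker vector $W$, Frobenius reciprocity (Section~\ref{sec:WhittakerVsIntertwiner}) attaches to it the intertwiner $T\in\Hom_G(\calH_\pi^\infty,C^\infty(G/N,\omega))$; the construction of Section~\ref{sec:IntertwinerToKernel} attaches to $T$ the kernel $\Psi$, whose restriction $\widetilde\Psi=\Psi(\cdot,o)$ is determined by $\widetilde\Psi(e)$ via Lemma~\ref{lem:FormulaForPsiTilde} together with $G\subseteq P^-K_\C N_\C$; and by the first part $\widetilde\Psi(e)$ is determined by $\eta=\widetilde\Psi(e)\mathbf{1}\in\calV_\pi$. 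Each step is linear, so $W\mapsto\eta$ is linear. For injectivity, suppose $\eta=0$. Then $\widetilde\Psi(e)=0$ on $\calF_0$ by the first part, hence $\widetilde\Psi\equiv 0$ by Lemma~\ref{lem:FormulaForPsiTilde}, hence $\Psi\equiv 0$ by \eqref{eq:PsiVsTildePsi}, hence $T(K_{z,\xi})(x)=\Psi^*(x,z)\xi=0$ for all $x\in G$, $z\in\calD$, $\xi\in\calV_\pi$. Since the reproducing kernels $K_{z,\xi}$ span a dense subspace of $\calH_\pi^\infty$ consisting of smooth vectors and $T$ is continuous and $G$-equivariant, this forces $T=0$, hence $W=0$ by Frobenius reciprocity. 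Therefore $\dim\Hom_N(\calH_\pi^\infty,\calF)\le\dim\calV_\pi$.

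The only step that is not pure bookkeeping is the last implication, that $T$ vanishing on all $K_{z,\xi}$ forces $T=0$; this amounts to the density of $\Span\{K_{z,\xi}:z\in\calD,\xi\in\calV_\pi\}$ in $\calH_\pi^\infty$ (equivalently, to the fact that a distribution vector of $\calH_\pi$ is determined by its pairings against the $K_{z,\xi}$). This is exactly the input already used in the tube type case, and I would simply invoke the argument of \cite[Lemma 4.4]{FOO24} here rather than reprove it. Everything else reduces to the already-established Lemmas~\ref{lem:FormulaForPsiTilde} and \ref{lem:IntertwinerFockSpaceToLKT} together with the identity \eqref{eq:PsiVsTildePsi}.
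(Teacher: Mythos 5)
Your proposal is correct and follows the same route as the paper: the paper states this corollary without a separate proof precisely because, as you note, the preceding discussion shows $\widetilde\Psi(e)\in\Hom_{N_\C\cap(K_\C P^-)}(\calF_0,\calV_\pi)$ so that Lemma~\ref{lem:IntertwinerFockSpaceToLKT} gives the integral formula, and the dimension bound is the injectivity of the chain $W\mapsto T\mapsto\Psi\mapsto\widetilde\Psi(e)\mapsto\eta$ established in Sections~\ref{sec:WhittakerVsIntertwiner}--\ref{sec:IntertwinerToKernel} and Lemma~\ref{lem:FormulaForPsiTilde}. Your only non-bookkeeping step (that $T$ vanishing on all $K_{z,\xi}$ forces $T=0$) is exactly the density input the paper implicitly relies on, and invoking the argument of \cite[Lemma 4.4]{FOO24} for it is consistent with how the paper handles the analogous points.
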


To show that the dimension is actually equal to $\dim\calV_\pi$, we construct in Section~\ref{sec:ExplicitKernels} kernel functions $\Psi$ explicitly and show that they give rise to Whittaker vectors.

\subsection{The case of unitary intertwining operators}\label{sec:UnitaryIntertwiners}

We assume for a moment that $T$ extends to an embedding $T:\calH_\pi\to L^2(G/N,\omega)$. Then its adjoint $T^*:L^2(G/N,\omega)\to\calH_\pi$ is an integral operator with integral kernel $\Psi(x,z)$:

\begin{proposition}\label{prop:AdjointIntertwiner}
	If $T:\calH_\pi\to L^2(G/N,\omega)$ is a $G$-equivariant continuous linear map and $\Psi$ as in Section~\ref{sec:IntertwinerToKernel}, then its adjoint $T^*:L^2(G/N,\omega)\to\calH_\pi$ is given by
	$$ T^*f(z) = \int_{G/N} \Psi(x,z)f(x)\,d(xN) \qquad (f\in L^2(G/N,\omega),z\in\calD). $$
\end{proposition}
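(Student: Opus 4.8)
The plan is to verify the asserted integral formula by testing it against elements of a spanning set of $\calH_\pi$, namely the reproducing kernel functions $K_{z,\xi}$, and using the defining property of $\Psi$ together with the reproducing property of $K_\pi$. Concretely, for $f\in L^2(G/N,\omega)$, $z\in\calD$ and $\xi\in\calV_\pi$ we compute the inner product $\langle T^*f, K_{z,\xi}\rangle_\pi$ in two ways. On one hand, by the reproducing property of the kernel Hilbert space $\calH_\pi$ we have $\langle T^*f,K_{z,\xi}\rangle_\pi = \langle (T^*f)(z),\xi\rangle$. On the other hand, moving $T^*$ to the other side, $\langle T^*f,K_{z,\xi}\rangle_\pi = \langle f, T K_{z,\xi}\rangle_{L^2(G/N,\omega)}$, and by the very definition of $\Psi^*$ in Section~\ref{sec:IntertwinerToKernel} we have $T(K_{z,\xi})(x) = \Psi^*(x,z)\xi = \Psi(x,z)^*\xi$. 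Hence
$$ \langle (T^*f)(z),\xi\rangle = \int_{G/N}\langle f(x),\Psi(x,z)^*\xi\rangle_\calF\,d(xN) = \int_{G/N}\langle\Psi(x,z)f(x),\xi\rangle\,d(xN), $$
and since this holds for every $\xi\in\calV_\pi$ the claimed formula follows, once the integral is shown to converge.

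First I would record that $K_{z,\xi}\in\calH_\pi^\infty$ (indeed $\calH_\pi$ is a space of holomorphic functions on the bounded domain $\calD$, so point evaluations and their adjoints land in the smooth, in fact analytic, vectors), so that $T(K_{z,\xi})$ is defined and the identity $T(K_{z,\xi})(x)=\Psi^*(x,z)\xi$ is exactly the definition used to introduce $\Psi$. Next I would make the pairing manipulations rigorous: the step $\langle T^*f,K_{z,\xi}\rangle_\pi=\langle f,TK_{z,\xi}\rangle$ is just the adjoint relation for the bounded operator $T$ (here I use the hypothesis that $T$ is continuous and $T^*$ is its genuine Hilbert-space adjoint), and the step $\langle f,TK_{z,\xi}\rangle=\int_{G/N}\langle f(x),\Psi(x,z)^*\xi\rangle\,d(xN)$ is the definition of the inner product on $L^2(G/N,\omega)$ as an integral of the fibrewise pairings over $G/N$ against the invariant measure $d(xN)$. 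Finally $\langle f(x),\Psi(x,z)^*\xi\rangle_\calF=\langle\Psi(x,z)f(x),\xi\rangle_{\calV_\pi}$ is the definition of the adjoint of $\Psi(x,z)\in\Hom(\calF,\calV_\pi)$.

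The one genuine point requiring care is \emph{integrability}: to pull the scalar $\xi$-pairing out of the integral and conclude the vector-valued identity, one must know that $x\mapsto\Psi(x,z)f(x)$ is an integrable $\calV_\pi$-valued function on $G/N$. I expect this to be the main (though modest) obstacle. It follows from Cauchy--Schwarz once we know $x\mapsto\|\Psi(x,z)\|_{\mathrm{op}}$ is in $L^2(G/N)$, or more simply from the fact that $f\mapsto T^*f(z)=\langle T^*f,K_{z,\xi}\rangle$-type evaluation is a bounded functional on $L^2(G/N,\omega)$ whose Riesz representative is precisely $x\mapsto\Psi(x,z)^*\xi$; boundedness of this functional is immediate since $T^*$ is bounded and point evaluation on $\calH_\pi$ is bounded. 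Thus the integral converges absolutely and represents the bounded functional, giving the formula. (Alternatively one may simply invoke that the proof is identical to the corresponding statement in \cite{FOO24}, since none of the steps used anything specific to the tube type case.)
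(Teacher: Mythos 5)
Your argument is exactly the paper's proof: pair $T^*f$ against the reproducing kernel $K_{z,\xi}$, use $\langle T^*f,K_{z,\xi}\rangle=\langle f,T(K_{z,\xi})\rangle$ together with the defining identity $T(K_{z,\xi})(x)=\Psi(x,z)^*\xi$, and move the adjoint of $\Psi(x,z)$ across the fibrewise pairing. The paper's version is the same four-line computation without the extra remarks on integrability, so your proposal is correct and essentially identical in approach.
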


\begin{proof}
	For $f\in L^2(G/N,\omega)$ we have
	\begin{align*}
		\langle T^*f(z),\xi\rangle &= \langle T^*f,K_{z,\xi}\rangle = \langle f,T(K_{z,\xi})\rangle\\
		&= \int_{G/N}\langle f(x),\Psi(x,z)^*\xi\rangle\,d(xN)\\
		&= \int_{G/N}\langle\Psi(x,z)f(x),\xi\rangle\,d(xN).\qedhere
	\end{align*}
\end{proof}

\begin{corollary}\label{cor:IntertwinerInTermsOfKernel}
	If $T:\calH_\pi\to L^2(G/N,\omega)$ is a $G$-equivariant continuous linear map and $\Psi$ as in Section~\ref{sec:IntertwinerToKernel}, then
	$$ TF(x) = \int_{\calD} \Psi(x,z)^*K_\pi(z,z)^{-1}F(z)\,d^*z \qquad (F\in \calH_\pi,x\in G). $$
\end{corollary}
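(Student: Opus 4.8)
The plan is to derive this as the ``transpose'' of Proposition~\ref{prop:AdjointIntertwiner}. Fix $F\in\calH_\pi$ and test $TF$ against an arbitrary $f\in L^2(G/N,\omega)$. By the defining property of the adjoint, by Proposition~\ref{prop:AdjointIntertwiner}, and by the explicit inner product on $\calH_\pi$ recalled in Section~\ref{sec:HoloDS},
$$ \langle TF,f\rangle_{L^2(G/N,\omega)} = \langle F,T^*f\rangle_\pi = \int_\calD\Big\langle K_\pi(z,z)^{-1}F(z),\ \int_{G/N}\Psi(x,z)f(x)\,d(xN)\Big\rangle_{\calV_\pi}\,d^*z. $$
Here $K_\pi(z,z)=\pi(K(z,z))^{-1}$ is self-adjoint, since $K(z,z)=K(z,z)^*$ by Lemma~\ref{lem:IdentitiesCocycleKernel}~\eqref{lem:IdentitiesCocycleKernel4} and $\pi(k^*)=\pi(k)^*$. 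Using $\langle\Psi(x,z)v,w\rangle_{\calV_\pi}=\langle v,\Psi(x,z)^*w\rangle_\calF$ and interchanging the order of the two integrations, the right-hand side becomes
$$ \int_{G/N}\Big\langle\int_\calD\Psi(x,z)^*K_\pi(z,z)^{-1}F(z)\,d^*z,\ f(x)\Big\rangle_\calF\,d(xN). $$
As $f\in L^2(G/N,\omega)$ is arbitrary, we conclude $TF(x)=\int_\calD\Psi(x,z)^*K_\pi(z,z)^{-1}F(z)\,d^*z$ for almost every $x\in G$.

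To promote this to an identity valid for \emph{every} $x\in G$, I would argue that both sides are continuous in $x$: the left-hand side because $T(\calH_\pi)$ is a closed subrepresentation of $L^2(G/N,\omega)$ equivalent to a discrete series representation, so all of its elements are represented by (smooth, hence) continuous functions; the right-hand side by dominated convergence, using that $\Psi(x,z)$ is continuous in $x$ by Lemma~\ref{lem:KernelEquivariance} together with a local bound on the operator norm of $\Psi(\cdot,z)$ coming from the equivariance in that lemma. Two continuous functions agreeing almost everywhere agree everywhere, which finishes the proof.

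The one genuinely analytic point is the interchange of the $d^*z$- and $d(xN)$-integrations, together with the convergence of the $\calF$-valued integral over $\calD$ on the right, and this is where I expect the actual work to lie (it is essentially an $L^2$-type estimate on $\Psi$, of the kind needed for Theorem~\ref{thm:EmbeddingsAreL2}). I would circumvent it by density: first establish the formula on the dense span of reproducing-kernel vectors $F=K_{z_0,\xi}$, where on the one hand $TF(x)=\Psi^*(x,z_0)\xi=\Psi(x,z_0)^*\xi$ by the very definition of $\Psi$ in Section~\ref{sec:IntertwinerToKernel}, and on the other hand, writing $K_{z_0,\xi}(z)=K_\pi(z,z_0)\xi$, the right-hand integral collapses to the same value by the reproducing property for $\calH_\pi$ — and for such $F$ all integrals converge absolutely. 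One then extends to general $F\in\calH_\pi$ using only the continuity of $T$, since the asserted identity is, after applying the bounded operator $T$, precisely the ``coherent-state'' resolution $F=\int_\calD K_{z,\,K_\pi(z,z)^{-1}F(z)}\,d^*z$, which is nothing but the inner product formula for $\calH_\pi$ read as a weak integral.
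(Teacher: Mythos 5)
Your computation is exactly the paper's proof: test $\langle TF,f\rangle=\langle F,T^*f\rangle$ against arbitrary $f\in L^2(G/N,\omega)$, insert the kernel formula for $T^*$ from Proposition~\ref{prop:AdjointIntertwiner} and the explicit inner product on $\calH_\pi$, pass $\Psi(x,z)$ to the other side as its adjoint, and interchange the two integrals. The additional remarks on continuity in $x$ and on justifying the interchange via reproducing-kernel vectors are reasonable but go beyond what the paper records, which stops at the formal identity.
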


\begin{proof}
	For $f\in L^2(G/N,\omega)$ we have, in view of the previous proposition:
	\begin{align*}
		\langle TF,f\rangle &= \langle F,T^*f\rangle = \int_{\calD}\langle K_\pi(z,z)^{-1}F(z),T^*f(z)\rangle\,d^*z\\
		&= \int_{\calD}\int_{G/N}\langle K_\pi(z,z)^{-1}F(z),\Psi(x,z)f(x)\rangle\,d(xN)\,d^*z\\
		&= \int_{G/N}\left\langle\int_{\calD}\Psi(x,z)^*K_\pi(z,z)^{-1}F(z)d^*z,f(x)\right\rangle\,d(xN).\qedhere
	\end{align*}
\end{proof}

\subsection{From kernels to unitary intertwining operators}\label{sec:ExplicitKernels}

Now let $\eta\in\calV_\pi$ and define $A_\eta\in\Hom(\calF,\calV_\pi)$ by
$$ A_\eta\zeta = \int_{\fn_{\frac{1}{2}}}\zeta(w)\pi(n_w^-)\eta \cdot e^{-|w|^2}\,dw \qquad (\zeta\in\calF). $$
Note that since $\pi(n_w^-)\eta$ is an antiholomorphic polynomial in $w$, the integral converges for all $\zeta\in\calF$. In view of the proof of Lemma~\ref{lem:IntertwinerFockSpaceToLKT}, the adjoint $A_\eta^*\in\Hom(\calV_\pi,\calF)$ is given by
$$ A_\eta^*\xi(z) = \langle\xi,\pi(n_z^-)\circ\eta\rangle = \langle\pi(n_z^+)^{-1}\xi,\eta\rangle \qquad (\xi\in\calV_\pi,z\in\fn_{\frac{1}{2}}). $$
We define $\widetilde\Psi_{\pi,\eta}:G\to\Hom(\calF_0,\calV_\pi)$ at $s=p^-_N(s)k_{\C,N}^-(s)n_\C^-(s)\in P^-K_\C N_\C$ by
$$ \widetilde\Psi_{\pi,\eta}(s) = \pi(k_{\C,N}^-(s))\circ A_\eta\circ\omega(n_\C^-(s)). $$
By the $(N_\C\cap K_\C P^-)$-equivariance of $A_\eta$, this is well-defined (cf. Remark~\ref{rem:DisambiguityDecomposition}) and gives rise to a kernel function $\Psi_{\pi,\eta}:G\times\calD\to\Hom(\calF_0,\calV_\pi)$ by \eqref{eq:PsiVsTildePsi}. For later purpose, we note the following formula for the adjoint $\Psi_{\pi,\eta}^*:G\times\calD\to\Hom(\calV_\pi,\calF)$ which follows from \eqref{eq:Decomposition+vs-}, \eqref{eq:PiAdjoint} and \eqref{eq:AdjointFockSpaceRep}:
\begin{align}
	\Psi_{\pi,\eta}(x,g\cdot o)^* &= \widetilde\Psi_{\pi,\eta}(g^{-1}x)^*\circ j_\pi(g,o)^{-*}\notag\\
	&= \omega(n_\C^-(g^{-1}x))^*\circ A_\eta^*\circ\pi(k_{\C,N}^-(g^{-1}x))^*\circ j_\pi(g,o)^{-*}\notag\\
	&= \omega(n_\C^+(g^{-1}x))^{-1}\circ A_\eta^*\circ\pi(k_{\C,N}^+(g^{-1}x))^{-1}\circ j_\pi(g,o)^{-*}.\label{eq:FormulaPsiStar}
\end{align}
Using the notation
\begin{equation}
	m:\calV_\pi\otimes\overline{\calV}_\pi\to\calF, \quad m(\xi\otimes\overline{\eta})(z) = \langle\pi(n_z^+)^{-1}\xi,\eta\rangle \qquad (\xi,\eta\in\calV_\pi,z\in\fn_{\frac{1}{2}}),\label{eq:DefMatrixCoeffMap}
\end{equation}
so that $m(\xi\otimes\overline{\eta})$ is a polynomial and hence contained in $\calF_0$, this can be written as
\begin{equation}
	\Psi_{\pi.,\eta}(x,g\cdot o)^*\xi = \omega(n_\C^+(g^{-1}x))^{-1}m\Big(\pi(k_{\C,N}^+(g^{-1}x))^{-1}\circ j_\pi(g,o)^{-*}\xi\otimes\overline{\eta}\Big).
\end{equation}
In particular, $\Psi_{\pi,\eta}(x,z)^*\in\Hom(\calV_\pi,\calF)$ so $\Psi_{\pi,\eta}(x,z)\in\Hom(\calF,\calV_\pi)$ instead of $\Hom(\calF_0,\calV_\pi)$.

In view of Corollary~\ref{cor:IntertwinerInTermsOfKernel}, we define (whenever the integral converges)
$$ T_{\pi,\eta}:\calH_\pi^\infty \to C^\infty(G/N,\omega), \quad T_{\pi,\eta}F(x) = \int_{\calD}\Psi_{\pi,\eta}(x,z)^*K_\pi(z,z)^{-1}F(z)\,d^*z. $$

In order to show that $T_{\pi,\eta}$ defines an intertwining operator $\calH_\pi\to L^2(G/N,\omega)$, it suffices to show that $T_{\pi,\eta}F\in L^2(G/N,\omega)$ for $F$ contained in the lowest $K$-type. Recall that the lowest $K$-type of $\calH_\pi$ is spanned by the functions $z\mapsto K_\pi(z,o)\xi=\xi$ for $\xi\in\calV_\pi$.

\begin{lemma}\label{lem:LKTembedding}
	Identifying $\xi\in\calV_\pi$ with the constant function $z\mapsto\xi$, the restriction of $T_{\pi,\eta}$ to $\calV_\pi$ is given by
	$$ T_{\pi,\eta}\xi(x) = \Psi_{\pi,\eta}(x,o)^*\xi = \omega(n_\C^+(x))^{-1}\circ A_\eta^*\circ\pi(k_{\C,N}^+(x))^{-1}\xi \qquad (\xi\in\calV_\pi,x\in G). $$
\end{lemma}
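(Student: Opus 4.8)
The second displayed equality requires no work: it is exactly formula \eqref{eq:FormulaPsiStar} specialised to $g=e$, where $g^{-1}x=x$ and $j_\pi(e,o)=\pi(k_\C(\exp o))^{-1}=\id$. So I only need to prove the first equality $T_{\pi,\eta}\xi(x)=\Psi_{\pi,\eta}(x,o)^*\xi$. The plan is to test this identity in $\calF$ against an arbitrary $\zeta$ from a dense subspace — say the span of the reproducing kernels $z\mapsto e^{\langle z,v\rangle}$ of $\calF$, on which all operators involved are certainly defined — and to invoke the reproducing property of $\calH_\pi$. The key structural fact is that the lowest $K$-type of $\calH_\pi$, realised as the constant functions $z\mapsto\xi$, coincides with the space of reproducing-kernel functions of $\calH_\pi$ at the base point: since $K(z,o)=e$ we have $K_\pi(\,\cdot\,,o)=\id$ by Lemma~\ref{lem:IdentitiesCocycleKernel}~\eqref{lem:IdentitiesCocycleKernel3}, and hence $\langle F,\xi\rangle_\pi=\langle F(o),\xi\rangle$ for all $F\in\calH_\pi$ and $\xi\in\calV_\pi$ (identifying $\xi$ with its constant function).

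Unfolding the definition of $T_{\pi,\eta}$ and moving the adjoint across the $\calF$-pairing inside the integral,
$$ \langle T_{\pi,\eta}\xi(x),\zeta\rangle_\calF=\int_\calD\big\langle K_\pi(z,z)^{-1}\xi,\ \Psi_{\pi,\eta}(x,z)\zeta\big\rangle_{\calV_\pi}\,d^*z. $$
By Lemma~\ref{lem:KernelEquivariance} the function $H_{x,\zeta}\colon z\mapsto\Psi_{\pi,\eta}(x,z)\zeta$ is holomorphic and $\calV_\pi$-valued, and the right-hand side above is, by definition of the inner product on $\calH_\pi$, nothing but $\langle\xi,H_{x,\zeta}\rangle_\pi$ — provided $H_{x,\zeta}\in\calH_\pi$. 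Granting this, the reproducing property gives $\langle\xi,H_{x,\zeta}\rangle_\pi=\langle\xi,H_{x,\zeta}(o)\rangle=\langle\xi,\Psi_{\pi,\eta}(x,o)\zeta\rangle=\langle\Psi_{\pi,\eta}(x,o)^*\xi,\zeta\rangle_\calF$, and since $\zeta$ ranges over a dense subspace and both $T_{\pi,\eta}\xi(x)$ and $\Psi_{\pi,\eta}(x,o)^*\xi$ lie in $\calF$, we conclude $T_{\pi,\eta}\xi(x)=\Psi_{\pi,\eta}(x,o)^*\xi$.

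The point that genuinely has to be verified — and which I expect to be the main obstacle — is the membership $H_{x,\zeta}\in\calH_\pi$; once this is known, Cauchy–Schwarz inside the integral (using $K_\pi(z,z)^{-1}=j_\pi(g,o)^*\,j_\pi(g,o)$ for $z=g\cdot o$, which follows from Lemma~\ref{lem:IdentitiesCocycleKernel}~\eqref{lem:IdentitiesCocycleKernel6} and \eqref{eq:PiAdjoint}) bounds the integral above by $\|\xi\|_\pi\,\|H_{x,\zeta}\|_\pi$, so the defining integral for $T_{\pi,\eta}\xi(x)$ converges absolutely and the interchanges are legitimate. Using \eqref{eq:PsiVsTildePsi} and the same identity for $K_\pi(z,z)^{-1}$, one computes $\|H_{x,\zeta}\|_\pi^2=\int_\calD\|\widetilde\Psi_{\pi,\eta}(g^{-1}x)\zeta\|_\calF^2\,d^*z$, which by $G$-invariance of $d^*z$ (equivalently $H_{gx,\zeta}=U_\pi(g)H_{x,\zeta}$) is independent of $x$, so one reduces to $\int_\calD\|\widetilde\Psi_{\pi,\eta}(g^{-1})\zeta\|_\calF^2\,d^*z<\infty$. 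I would establish this by $\SU(1,1)$-reduction: writing a general element of $G$ via $G=KA(K\cap L)N$, the $K$-factor drops out because $\pi|_K$ and $\tau$ are unitary, the $(K\cap L)$- and $N$-factors are absorbed up to unitary relabellings of $\eta$ and $\zeta$ using the $(K\cap L)$- and $(N_\C\cap K_\C P^-)$-equivariance of $A_\eta$ and $\omega$, and on $A$ the explicit decompositions of Corollary~\ref{thm:pkn2} and \eqref{eq:pkn}, together with the action \eqref{eq:ActionHighestWeightVector} of $\exp\fc_\C$ on the highest-weight line, reduce the integral to a product of one-variable integrals that converge precisely because of the Harish-Chandra inequality $\mu_{\pi,j}\geq\rho_j$ of \eqref{eq:DSconditionMuJ}. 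This is essentially the same estimate that underlies the $L^2$-statement of Theorem~\ref{thm:EmbeddingsAreL2}.
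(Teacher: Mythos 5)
Your proposal is correct and follows essentially the same route as the paper: the second equality is read off from \eqref{eq:FormulaPsiStar} at $g=e$, and the first is the reproducing kernel property of $K_\pi$ at the base point (constant functions being the kernel functions $K_{o,\xi}$), which is exactly the paper's one-line proof. The additional care you take over the membership $\Psi_{\pi,\eta}(x,\cdot)\zeta\in\calH_\pi$ and the absolute convergence of the defining integral addresses a point the paper leaves implicit, but it does not change the argument's structure.
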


\begin{proof}
	The first identity follows from the reproducing kernel property of $K_\pi$ and the second identity is a special case of \eqref{eq:FormulaPsiStar}.
\end{proof}

In particular, for $x=k_1ak_2$ with $k_1\in K$, $a\in A$ and $k_2\in K\cap L$ we find
\begin{equation*}
	T_{\pi,\eta}\xi(x) = \omega(k_2^{-1}n_\C^+(a)k_2)^{-1}\circ A_\eta^*\circ\pi(k_1k_{\C,N}^+(a)k_2)^{-1}\xi.
\end{equation*}
By $\SU(1,1)$-reduction, $n_\C^-(a)$ is contained in the center of $N_\C$ for $a\in A$. Since $K\cap L$ leaves $E\in\fn_1$ invariant, it follows from \eqref{eq:FockActionN1} that $\omega(k_2^{-1}n_\C^+(a)k_2)=\omega(n_\C^+(a))$. This implies:
\begin{equation*}
	T_{\pi,\eta}\xi(x) = \omega(n_\C^+(a))^{-1}\circ A_\eta^*\circ\pi(k_2)^{-1}\circ\pi(k_{\C,N}^+(a))^{-1}\circ\pi(k_1)^{-1}\xi.
\end{equation*}

\begin{theorem}\label{thm:EmbeddingsAreL2}
	$T_{\pi,\eta}\xi\in L^2(G/N,\omega)$ for some resp. all $\xi\in\calV_\pi$ if and only if $\calH_\pi$ belongs to the holomorphic discrete series.
\end{theorem}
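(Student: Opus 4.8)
The plan is to reduce the $L^2$-estimate to an explicit integral over the maximally split torus $A$ via the Cartan-type decomposition $G = K A (K\cap L)$ established in the proof of Theorem~\ref{thm:pkn1}, and then to compute that integral using the $\SU(1,1)$-reduction formulas from Corollary~\ref{thm:pkn2}. First I would fix a nonzero $\xi\in\calV_\pi$, and using the formula from Lemma~\ref{lem:LKTembedding} together with the reduction computed right before the statement, write
$$ T_{\pi,\eta}\xi(k_1 a k_2) = \omega(n_\C^+(a))^{-1}\circ A_\eta^*\circ\pi(k_2)^{-1}\circ\pi(k_{\C,N}^+(a))^{-1}\circ\pi(k_1)^{-1}\xi. $$
Since $\omega$ is unitary on $N$, the pointwise norm $\|T_{\pi,\eta}\xi(k_1ak_2)\|_\calF^2$ depends only on $a$ and the $K\times(K\cap L)$-variables, and integrating out the compact variables against the Haar measure on $K$ and $K\cap L$ (harmless, since everything is continuous) the $L^2(G/N,\omega)$-norm becomes, up to a positive constant, an integral over $A$ of a $K\times(K\cap L)$-average of $\|\omega(n_\C^+(a))^{-1} A_\eta^* \pi(k_{\C,N}^+(a))^{-1} v\|_\calF^2$ against the relevant Jacobian $\prod_{\alpha\in\Sigma^+} (\sinh\alpha(\log a))^{m_\alpha}$ coming from the $KAK$-integration formula.

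Next I would make the integrand fully explicit on $A$. Writing $a = \exp(\sum_{j=1}^r t_j x_j)$, Corollary~\ref{thm:pkn2} gives $k_{\C,N}^+(\exp t_j x_j) = \exp(-t_j h_j)$ and $n_\C^+(\exp t_j x_j)$ in terms of $\varphi_j\bigl(\begin{smallmatrix} i & -i\\ i & -i\end{smallmatrix}\bigr)$; because the $\fg_\C^j$ commute these multiply up. The action of $\pi(\exp(\sum_j t_j h_j))^{-1}$ on a highest-weight vector is governed by \eqref{eq:ActionHighestWeightVector}, producing a factor $\prod_j e^{\mu_{\pi,j} t_j}$; on the remaining weight vectors of $\calV_\pi$ one gets $\prod_j e^{\nu_j t_j}$ with $\nu_j \le \mu_{\pi,j}$. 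The factor $\omega(n_\C^+(a))^{-1}$ acts on $\calF$ by \eqref{eq:OscillatorActionOnNC+}/\eqref{eq:OscillatorActionOnNC-} (more precisely by a composition of a $\fn_{\frac12}$-shift and a central character, after decomposing $n_\C^+(a)$ into its $\fn_{\frac12}$- and $\fn_1$-parts), and one estimates $\|\omega(n_\C^+(a))^{-1}v\|_\calF^2$ from above and below by $e^{c|w(a)|^2}$-type quantities; here $w(a)$ grows like $e^{-t_j}$ minus a constant, hence stays bounded as $t_j\to+\infty$ and grows like $e^{-t_j}$ as $t_j\to-\infty$. Meanwhile the Jacobian $\prod_\alpha(\sinh\alpha)^{m_\alpha}$ behaves like $e^{2\rho(\log a)}$ for $\log a$ in the positive chamber, where $\rho = \rho(\fg,\fa)$, and one converts $\rho$ into the $\rho_j$'s via $2\rho|_{\fc_\C}$ using Moore's Theorem (Theorem~\ref{thm:Moore2}) and the standard root multiplicities of $\Delta(\fg,\fa)$.

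Putting these together, the $A$-integral factorizes (up to the controlled Fock-norm and weight-vector corrections) into a product over $j$ of one-variable integrals $\int_\R e^{(2\mu_{\pi,j} - 2\rho_j)t_j}\cdot(\text{bounded as }t_j\to+\infty)\,dt_j$ together with the complementary behaviour as $t_j\to-\infty$, which by the Weyl-group symmetry of the $KAK$-measure is the mirror image. Convergence at $t_j\to+\infty$ therefore holds if and only if $\mu_{\pi,j} > \rho_j$ is... — more carefully, one finds the integral converges precisely when $\mu_{\pi,j} \ge \rho_j$ for all $j$ in the relevant sense, which by \eqref{eq:DSconditionMuJ} is exactly the Harish-Chandra condition \eqref{eq:HCcond} characterizing the holomorphic discrete series; and the ``some resp.\ all $\xi$'' clause follows because the lower-weight vectors only improve convergence ($\nu_j\le\mu_{\pi,j}$) while the highest-weight component (present for generic $\xi$) saturates it. I expect the main obstacle to be bookkeeping the non-tube-type contributions: the $\fn_{\frac12}$-part of $n_\C^+(a)$ and its interaction with the Fock-space norm must be shown not to spoil the borderline exponents — i.e.\ one needs that $\|\omega(n_\C^+(a))^{-1}v\|_\calF$ contributes only subexponential or decaying factors in each $t_j$, so that the convergence threshold is dictated purely by the weight exponents and the $\rho_j$, matching the tube-type answer of \cite{FOO24}.
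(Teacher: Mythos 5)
Your overall strategy---reduce the $L^2$-norm to an explicit integral over $A$ via a Cartan-type decomposition, make the integrand explicit with the $\SU(1,1)$-reduction of Corollary~\ref{thm:pkn2} and the weight action \eqref{eq:ActionHighestWeightVector}, and compare the exponents $\mu_{\pi,j}$ with $\rho_j$---is the same as the paper's, and your positive-chamber analysis correctly produces the threshold \eqref{eq:DSconditionMuJ}. However, there are two genuine gaps. First, the integration formula: $\|T_{\pi,\eta}\xi(\cdot)\|_{\calF}$ lives on $G/N$, so you cannot invoke the $KAK$-formula for Haar measure on $G$ with Jacobian $\prod_{\alpha\in\Delta^+(\fg,\fa)}(\sinh\alpha)^{m_\alpha}$. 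The paper instead uses $G=KLN$ to write $d(gN)=l^{2\rho_{\fn}}\,dl\,dk$ and then the Cartan decomposition $L=(K\cap L)A(K\cap L)$, so the correct density is $a^{2\rho_{\fn}}w(a)$ with $w$ built only from roots of $\fl$. In the deep positive chamber this agrees with your $e^{2\rho(\log a)}$, but it is \emph{not} Weyl-symmetric, so your appeal to ``Weyl-group symmetry of the $KAK$-measure'' to dispose of the region $t_j\to-\infty$ is unfounded; nor is the integrand symmetric there, since $\pi(k_{\C,N}^+(a))^{-1}\xi$ grows like $\prod_j e^{-\mu_{\pi,j}t_j}$.

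Second, and this is the crux you explicitly leave unresolved: the factor $\omega(n_\C^+(a))^{-1}$. By Corollary~\ref{thm:pkn2}, $n_\C^+\bigl(\exp\sum_j t_jx_j\bigr)$ lies entirely in the complexified \emph{center} $\fn_{1,\C}$ of $\fn_\C$ --- there is no $\fn_{\frac{1}{2}}$-shift component at all --- so by \eqref{eq:FockActionN1} it acts on $\calF$ as the explicit scalar $\exp\bigl(-\sum_j(1-e^{-2t_j})\bigr)$. This is not a nuisance term to be bounded by ``subexponential or decaying factors'': it is precisely the superexponential factor that forces convergence as $t_j\to-\infty$ and justifies restricting the integral to $t_1>\cdots>t_r>0$, after which the weight-versus-$\rho$ comparison takes over. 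Since your proposal leaves the sign and size of this contribution undetermined and instead relies on a symmetry that is not present, the convergence analysis away from the positive chamber is incomplete as written. (A minor point: for the ``some resp.\ all $\xi$'' clause the paper simply combines $K$-equivariance of $T_{\pi,\eta}|_{\calV_\pi}$ with irreducibility of $\calV_\pi$; this also covers the implication ``some $\Rightarrow$ all'', which your genericity remark about the highest-weight component does not.)
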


\begin{proof}
	We use the integral formulas
	$$ \int_{G/N}\varphi(x)\,d(xN) = \int_K\int_L\varphi(kl)l^{2\rho_\fn}\,dl\,dk $$
	and
	$$ \int_L\varphi(l)\,dl = \int_{K\cap L}\int_{K\cap L}\int_A\varphi(k_1ak_2)w(a)\,da\,dk_1\,dk_2, $$
	where
	$$ w(\exp H) = \prod_{\alpha\in\Delta^+(\fl,\fa)}(\sinh\alpha(H))^{\dim\fl^\alpha}. $$
	With this, we find
	\begin{multline*}
		\int_{G/N}\|T_{\pi,\eta}\xi\|^2\,d(gN) = \int_K\int_A\int_{K\cap L}\|\omega(n_\C^+(a))^{-1}\circ\\
		A_\eta^*\circ\pi(k_2)^{-1}\circ\pi(k_{\C,N}^+(a))^{-1}\circ\pi(k_1)^{-1}\xi\|^2\,a^{2\rho_n}w(a)\,dk_2\,da\,dk_1.
	\end{multline*}
	By Schur's orthogonality relations this equals up to a constant
	\begin{equation}
		\int_A\|\omega(n_\C^+(a))^{-1}\circ A_\eta^*\circ\pi(k_{\C,N}^+(a))^{-1}\xi\|^2 a^{2\rho_{\fn}}w(a)\,da.\label{eq:HopefullyConvergentIntegral}
	\end{equation}
	Now, by the $\SU(1,1)$-reduction in Corollary~\ref{thm:pkn2} we find for $a=\exp\sum_{j=1}^r t_jx_j$:
	\begin{align*}
		k_{\C,N}^+(a) &= \exp\left(-\sum_{j=1}^r t_jh_j\right),\\
		n_\C^+(a) &= \exp\left(\frac{1}{2i}\sum_{j=1}^r(1-e^{-2t_j})\varphi_j \begin{pmatrix} i & - i\\ i & - i \end{pmatrix}\right).
	\end{align*}
	By \eqref{eq:FockActionN1} we have
	$$ \omega(n_\C^+(a))^{-1} = \exp\left(-\sum_{j=1}^r(1-e^{-2t_j})\right). $$
	Moreover, if $\xi\in\calV_\pi$ is a vector in the highest restricted weight space, then by \eqref{eq:ActionHighestWeightVector}:
	$$ \pi(k_{\C,N}^+(a))^{-1}\xi = \left(\prod_{j=1}^re^{-\mu_{\pi,j}t_j}\right)\xi. $$
	It follows that \eqref{eq:HopefullyConvergentIntegral} equals
	$$ \int_{t_1>\cdots>t_r} \prod_{j=1}^r\exp\left(-2(1-e^{-2t_j})-2\mu_{\pi,j}t_j+2\rho_{\fn,j}t_j\right)\cdot w\left(\exp\sum_{j=1}^rt_jx_j\right)\,dt_1\cdots dt_r. $$
	Because of the superexponential decay as $t_j\to-\infty$, it is sufficient to study the integral over $t_1>\cdots>t_r>0$. In this region, we can estimate $w(a)\sim a^{2\rho_\fl}$, so the integral under consideration becomes
	$$ \int_{t_1>\cdots>t_r>0} \prod_{j=1}^r\exp\left(-2(1-e^{-2t_j})-2\mu_{\pi,j}t_j+2\rho_{\fn,j}t_j+2\rho_{\fl,j}t_j\right)\,dt_1\cdots dt_r. $$
	This integral clearly converges if and only if $\mu_{\pi,j}>\rho_{\fn,j}+\rho_{\fl,j}=\rho_j$ for all $1\leq j\leq r$, which is precisely condition \eqref{eq:DSconditionMuJ}. Finally, $T_{\pi,\eta}\xi\in L^2(G/N,\omega)$ holds for $\xi$ in the highest restricted weight space if and only if it holds for all $\xi\in\calV_\pi$ by irreducibility.
\end{proof}

This shows that every $T_{\pi,\eta}$ extends to $\calH_\pi\to L^2(G/N,\omega)$. Because $\calH_\pi$ is irreducible and unitary, this extension must be unitary (up to a scalar). As shown in Section~\ref{sec:UnitaryIntertwiners}, every unitary intertwining operator $\calH_\pi\to L^2(G/N,\omega)$ must be of this form, so we have shown:

\begin{corollary}\label{cor:HolDScontribution}
	For every $\eta\in\calV_\pi$ the operator $T_{\pi,\eta}$ extends to a $G$-equivariant unitary (up to a scalar) operator
	$$ T_{\pi,\eta}:\calH_\pi\to L^2(G/N,\omega). $$
	Moreover, the map
	$$ \bigoplus_\pi\calH_\pi\otimes\overline{\calV_\pi}\to L^2(G/N,\omega), \quad \calH_\pi\otimes\overline{\calV_\pi}\ni F\otimes\overline\eta\mapsto T_{\pi,\eta}F $$
	is an embedding onto the holomorphic discrete series contribution to the Plancherel decomposition of $L^2(G/N,\omega)$.
\end{corollary}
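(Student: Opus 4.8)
The plan is to realize $T_{\pi,\eta}$ as the Hilbert space adjoint of an explicit integral operator $S_{\pi,\eta}\colon L^2(G/N,\omega)\to\calH_\pi$, so that the extension and (up to a scalar) unitarity of $T_{\pi,\eta}$ reduce to boundedness of $S_{\pi,\eta}$. Here $\calH_\pi$ is a holomorphic discrete series representation. First I would record the normalizing constant: by Theorem~\ref{thm:EmbeddingsAreL2}, $T_{\pi,\eta}$ maps the lowest $K$-type $\calV_\pi$ into $L^2(G/N,\omega)$; since this restriction is $K$-equivariant and $\calV_\pi$ is $K$-irreducible, Schur's lemma gives $(T_{\pi,\eta}|_{\calV_\pi})^*(T_{\pi,\eta}|_{\calV_\pi})=c_{\pi,\eta}\cdot\id_{\calV_\pi}$ for some $c_{\pi,\eta}\geq0$, with $c_{\pi,\eta}>0$ when $\eta\neq0$ because by Lemma~\ref{lem:LKTembedding} the value $T_{\pi,\eta}\eta(e)=A_\eta^*\eta$ is the polynomial $z\mapsto\ip{\pi(n_z^+)^{-1}\eta}{\eta}$, already nonzero at $z=0$.

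I would then define $S_{\pi,\eta}f(z)=\int_{G/N}\Psi_{\pi,\eta}(x,z)f(x)\,d(xN)$, the formula predicted for the adjoint in Proposition~\ref{prop:AdjointIntertwiner}, now taken as a definition. Since $G$ and $N$ are both unimodular, $G/N$ carries a $G$-invariant measure, and using the equivariance of $\Psi_{\pi,\eta}$ from Lemma~\ref{lem:KernelEquivariance} together with \eqref{eq:PsiVsTildePsi}, the convergence of this integral (and finiteness of $\int_{G/N}\|\Psi_{\pi,\eta}(x,z)\|_{\HS}^2\,d(xN)$) reduces to the case $z=o$, where $\int_{G/N}\|\widetilde\Psi_{\pi,\eta}(x)\|_{\HS}^2\,d(xN)=\sum_i\|T_{\pi,\eta}\xi_i\|_{L^2}^2<\infty$ for an orthonormal basis $\{\xi_i\}$ of $\calV_\pi$ (using $\widetilde\Psi_{\pi,\eta}(x)^*\xi_i=T_{\pi,\eta}\xi_i(x)$). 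The same equivariance and the $G$-invariance of $d(xN)$ show that $S_{\pi,\eta}$ intertwines the regular representation of $G$ on $L^2(G/N,\omega)$ with $U_\pi$ on $\calO(\calD,\calV_\pi)$.

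The heart of the matter is the kernel identity
\[
\int_{G/N}\Psi_{\pi,\eta}(x,z)\,\Psi_{\pi,\eta}(x,w)^*\,d(xN)=c_{\pi,\eta}\,K_\pi(z,w)\qquad(z,w\in\calD),
\]
which says that the left side is $c_{\pi,\eta}$ times the reproducing kernel of $\calH_\pi$; the standard factorization of a positive operator-valued kernel then gives that $S_{\pi,\eta}$ maps $L^2(G/N,\omega)$ into $\calH_\pi$ with $\|S_{\pi,\eta}\|^2\le c_{\pi,\eta}$ — concretely, the identity forces $\|\sum_k\bar c_k\Psi_{\pi,\eta}(\cdot,z_k)^*\xi_k\|_{L^2}^2=c_{\pi,\eta}\|\sum_kc_kK_{z_k,\xi_k}\|_{\calH_\pi}^2$, which is exactly what is needed to put $S_{\pi,\eta}f$ into $\calH_\pi$. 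To prove the identity, note that its diagonal case $z=w=o$, tested against $\xi,\xi'\in\calV_\pi$, is precisely the relation $\ip{T_{\pi,\eta}\xi}{T_{\pi,\eta}\xi'}_{L^2}=c_{\pi,\eta}\ip{\xi}{\xi'}$ from the first paragraph; both sides of the identity are holomorphic in $z$ and antiholomorphic in $w$, and, writing $\Phi$ for either side, both obey the transformation rule $\Phi(g\cdot z,g\cdot w)=j_\pi(g,z)^{-1}\Phi(z,w)\,j_\pi(g,w)^{-*}$ for $g\in G$ — for the left side via Lemma~\ref{lem:KernelEquivariance} and the invariance of $d(xN)$, for the right side via Lemma~\ref{lem:IdentitiesCocycleKernel} and $K_\pi(o,\cdot)=K_\pi(\cdot,o)=\id$. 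Since $G$ acts transitively on $\calD$, the two sides agree on the entire diagonal, and a function holomorphic in $z$ and antiholomorphic in $w$ is determined by its restriction to the diagonal, so the identity holds identically. This identity — equivalently, the boundedness of $S_{\pi,\eta}$ — is the main obstacle; everything else is soft.

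It remains to package the result. Set $T_{\pi,\eta}:=S_{\pi,\eta}^*\colon\calH_\pi\to L^2(G/N,\omega)$; pairing $S_{\pi,\eta}^*\xi$ against reproducing kernels and invoking Lemma~\ref{lem:LKTembedding} shows $S_{\pi,\eta}^*$ agrees with the integral operator $T_{\pi,\eta}$ on $\calV_\pi$, hence — both being $G$-equivariant — on all of $\calH_\pi^\infty$, so this is the desired bounded extension. Then $T_{\pi,\eta}^*T_{\pi,\eta}=S_{\pi,\eta}S_{\pi,\eta}^*$ is a $G$-endomorphism of the irreducible $\calH_\pi$, hence a scalar, which evaluation on $\calV_\pi$ identifies as $c_{\pi,\eta}>0$; thus $T_{\pi,\eta}$ is $\sqrt{c_{\pi,\eta}}$ times an isometric $G$-embedding, giving the first assertion. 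For the second, Corollary~\ref{cor:UpperBoundWhittaker} together with the Frobenius reciprocity of Section~\ref{sec:WhittakerVsIntertwiner} bounds $\dim\Hom_G(\calH_\pi,L^2(G/N,\omega))$ above by $\dim\calV_\pi$, while $\eta\mapsto T_{\pi,\eta}$ is a conjugate-linear injection of $\calV_\pi$ into that Hom-space (injective by Lemma~\ref{lem:LKTembedding} and the nondegeneracy of the matrix coefficient map $m$ from \eqref{eq:DefMatrixCoeffMap}), so every holomorphic discrete series $\calH_\pi$ occurs with multiplicity exactly $\dim\calV_\pi$. As distinct discrete series representations span mutually orthogonal subspaces of $L^2(G/N,\omega)$, the map $\bigoplus_\pi\calH_\pi\otimes\overline{\calV_\pi}\to L^2(G/N,\omega)$ is an isometric embedding (after the evident normalization of the summands), and the matching upper and lower multiplicity bounds identify its image with the span of all copies of holomorphic discrete series representations — i.e.\ the holomorphic discrete series contribution to the Plancherel decomposition.
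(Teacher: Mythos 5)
Your proposal is correct, but it routes the central step differently from the paper. The paper's own argument is very short: Theorem~\ref{thm:EmbeddingsAreL2} shows that the lowest $K$-type lands in $L^2(G/N,\omega)$, and the paper then simply invokes irreducibility and unitarity of $\calH_\pi$ to conclude that the $G$-equivariant map $T_{\pi,\eta}$ extends to a scalar multiple of an isometry; the multiplicity statement follows by combining the upper bound of Corollary~\ref{cor:UpperBoundWhittaker} (via the Frobenius reciprocity of Section~\ref{sec:WhittakerVsIntertwiner}) with the injectivity of $\eta\mapsto T_{\pi,\eta}$. You instead make the extension step explicit: you prove the operator-valued kernel identity $\int_{G/N}\Psi_{\pi,\eta}(x,z)\Psi_{\pi,\eta}(x,w)^*\,d(xN)=c_{\pi,\eta}K_\pi(z,w)$ by checking it at $(o,o)$ (where it is exactly the Schur-orthogonality consequence of Theorem~\ref{thm:EmbeddingsAreL2}), propagating it along the diagonal by the common $j_\pi$-transformation law, and extending off the diagonal by holomorphy/antiholomorphy; the factorization of this positive-definite kernel then shows directly that $T_{\pi,\eta}$ is $\sqrt{c_{\pi,\eta}}$ times an isometry on the dense span of reproducing kernels. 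This buys a self-contained proof of boundedness precisely where the paper is most terse (the passage from ``lowest $K$-type is $L^2$'' to ``the whole representation embeds''), at the cost of an extra reproducing-kernel computation; both arguments ultimately rest on the same two inputs, namely the convergence in Theorem~\ref{thm:EmbeddingsAreL2} and the dimension count from Corollary~\ref{cor:UpperBoundWhittaker}. Two small points: your phrase ``isometric embedding'' for the map on $\calH_\pi\otimes\overline{\calV_\pi}$ overstates things slightly, since for a fixed $\pi$ the images $T_{\pi,\eta}\calH_\pi$ for different $\eta$ need not be mutually orthogonal (the corollary only claims an embedding, which your multiplicity count does establish); and, like the paper, you implicitly use that a bounded $G$-intertwiner $\calH_\pi\to L^2(G/N,\omega)$ restricts to $\calH_\pi^\infty\to C^\infty(G/N,\omega)$ when applying Frobenius reciprocity for the upper bound.
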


\subsection{Back to Whittaker vectors}\label{sec:BackToWhittaker}

Every unitary intertwining operator $T_{\pi,\eta}:\calH_\pi\to L^2(G/N,\omega)$ constructed in the previous section restricts to an operator $\calH_\pi^\infty\to C^\infty(G/N,\omega)$, so the inequality in Corollary~\ref{cor:UpperBoundWhittaker} is actually an equality. We now identify the corresponding Whittaker vectors with $\calF$-valued antiholomorphic functions on $\calD$.

As in \cite[Lemma 2.4]{FOO24}, we identify a Whittaker vector $W\in\Hom_N(\calH_\pi^\infty,\calF)$ with an antiholomorphic function $\Pi:\calD\to\calV_\pi^\vee\otimes\calF$ in the sense that
$$ W(F) = \int_\calD\Pi(z)\left(K_\pi(z,z)^{-1}F(z)\right)\,d^*z \qquad (F\in\calH_\pi^\infty). $$
Let $\Pi_{\pi,\eta}$ denote the function associated with the Whittaker vector which arises from the intertwiner $T_{\pi,\eta}$ (in the sense of Section~\ref{sec:WhittakerVsIntertwiner}), i.e.
\begin{equation}
	T_{\pi,\eta}(F)(e) = \int_\calD\Pi_{\pi,\eta}(z)\left(K_\pi(z,z)^{-1}F(z)\right)\,d^*z \qquad (F\in\calH_\pi^\infty).\label{eq:AntiholWhittakerVectorInTermsOfIntertwiner}
\end{equation}

\begin{proposition}\label{prop:WhittakerVectorsAsAntiholFcts}
	As an element of $\Hom(\calV_\pi,\calF)\simeq\calV_\pi^\vee\otimes\calF$, the following identity holds, where $z=g\cdot o$:
	$$ \Pi_{\pi,\eta}(z) = \Psi(e,z)^* = \omega(n_\C^+(g^{-1}))^{-1}\circ A_\eta^*\circ\pi(k_{\C,N}^+(g^{-1}))^{-1}\circ j_\pi(g,o)^{-*}. $$
\end{proposition}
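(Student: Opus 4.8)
The plan is to chase through the definitions connecting the three objects in play --- the intertwiner $T_{\pi,\eta}$, the kernel $\Psi_{\pi,\eta}$, and the antiholomorphic function $\Pi_{\pi,\eta}$ --- and to observe that the identity is essentially a specialization of the general correspondence set up in Sections~\ref{sec:IntertwinerToKernel} and \ref{sec:BackToWhittaker} to the point $x=e$. First I would recall from \eqref{eq:WhittakerFromIntertwiner} that the Whittaker vector attached to $T_{\pi,\eta}$ is $W(F)=T_{\pi,\eta}(F)(e)$, and that by the integral formula for $T_{\pi,\eta}$ (Corollary~\ref{cor:IntertwinerInTermsOfKernel}) one has
$$ T_{\pi,\eta}(F)(e) = \int_\calD \Psi_{\pi,\eta}(e,z)^* K_\pi(z,z)^{-1}F(z)\,d^*z \qquad (F\in\calH_\pi^\infty). $$
Comparing this with the defining relation \eqref{eq:AntiholWhittakerVectorInTermsOfIntertwiner} for $\Pi_{\pi,\eta}$ and using that $K_\pi(z,z)^{-1}F(z)$ ranges over a dense subset of $L^2(\calD,d^*z)\otimes\calV_\pi$ as $F$ ranges over $\calH_\pi^\infty$ (so that the kernel of the integral representation is uniquely determined), I would conclude $\Pi_{\pi,\eta}(z)=\Psi_{\pi,\eta}(e,z)^*$, which is the first equality.

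For the second equality I would simply insert the explicit formula \eqref{eq:FormulaPsiStar} for $\Psi_{\pi,\eta}(x,g\cdot o)^*$ with $x=e$, which reads
$$ \Psi_{\pi,\eta}(e,g\cdot o)^* = \omega(n_\C^+(g^{-1}))^{-1}\circ A_\eta^*\circ\pi(k_{\C,N}^+(g^{-1}))^{-1}\circ j_\pi(g,o)^{-*}, $$
using $g^{-1}x=g^{-1}e=g^{-1}$. This is exactly the displayed right-hand side. The fact that the expression is independent of the choice of decomposition $g^{-1}=p_N^+(g^{-1})k_{\C,N}^+(g^{-1})n_\C^+(g^{-1})$ was already established in Remark~\ref{rem:DisambiguityDecomposition} together with the $(N_\C\cap K_\C P^-)$-equivariance of $A_\eta$ coming from Lemma~\ref{lem:IntertwinerFockSpaceToLKT}, so nothing new is needed there.

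The one point requiring a little care --- and the place I would expect the only genuine obstacle --- is the uniqueness argument identifying the integral kernel: one must know that the pairing $F\mapsto \int_\calD \Pi(z)(K_\pi(z,z)^{-1}F(z))\,d^*z$ determines $\Pi$ uniquely among antiholomorphic $\calV_\pi^\vee\otimes\calF$-valued functions, i.e. that the antiholomorphic vectors $z\mapsto K_\pi(z,z)^{-1}F(z)$ for $F\in\calH_\pi^\infty$ span a weak-$*$-dense subspace of the appropriate distribution space. This is precisely the content of the identification of distribution vectors on $\calH_\pi$ with antiholomorphic functions on $\calD$ referenced from \cite[Lemma 2.4]{FOO24}; I would invoke that reference rather than reprove it. Once this is granted, matching \eqref{eq:AntiholWhittakerVectorInTermsOfIntertwiner} against the formula for $T_{\pi,\eta}(F)(e)$ gives the result immediately, and the whole proof is two lines of bookkeeping.
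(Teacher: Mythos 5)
Your proposal is correct and follows essentially the same route as the paper: the first equality by matching \eqref{eq:WhittakerFromIntertwiner}, Corollary~\ref{cor:IntertwinerInTermsOfKernel} and \eqref{eq:AntiholWhittakerVectorInTermsOfIntertwiner}, and the second by specializing \eqref{eq:FormulaPsiStar} to $x=e$. The only difference is that you make explicit the uniqueness of the antiholomorphic kernel $\Pi$ (via the identification from \cite[Lemma 2.4]{FOO24}), which the paper treats as immediate; this is a harmless and slightly more careful rendering of the same argument.
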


\begin{proof}
	The first identity is immediately clear from \eqref{eq:WhittakerFromIntertwiner}, Corollary~\ref{cor:IntertwinerInTermsOfKernel} and \eqref{eq:AntiholWhittakerVectorInTermsOfIntertwiner}. The second one is a special case of \eqref{eq:FormulaPsiStar}.
\end{proof}


\providecommand{\bysame}{\leavevmode\hbox to3em{\hrulefill}\thinspace}
\providecommand{\MR}{\relax\ifhmode\unskip\space\fi MR }
\providecommand{\MRhref}[2]{%
	\href{http://www.ams.org/mathscinet-getitem?mr=#1}{#2}
}
\providecommand{\href}[2]{#2}

\end{document}